\documentclass[10pt]{article}
\usepackage[T1]{fontenc}
\usepackage[utf8]{inputenc}
\usepackage{color}
\usepackage{verbatim}
\usepackage{mathtools}
\usepackage{enumitem}
\usepackage{amsmath}
\usepackage{amsthm}
\usepackage{amssymb}
\usepackage{stmaryrd}
\usepackage{authblk}
\usepackage[all]{xy}
\usepackage[bookmarks=false,
 breaklinks=false,pdfborder={0 0 1},backref=false,colorlinks=true]
 {hyperref}
\hypersetup{
 allcolors=blue}

\makeatletter

\usepackage{amsfonts}

\usepackage{orcidlink}

\DeclareMathOperator{\Ul}{Ul}

\DeclareMathOperator{\cl}{cl}
\DeclareMathOperator{\In}{int}

\DeclareMathOperator{\Fi}{Fi}

\newtheorem{theorem}{Theorem}\newtheorem{lemma}[theorem]{Lemma}\newtheorem{corollary}[theorem]{Corollary}\newtheorem{proposition}[theorem]{Proposition}\theoremstyle{definition}
\newtheorem{definition}[theorem]{Definition}\newtheorem{remark}[theorem]{Remark}

\newcommand{\email}[1]{\thanks{#1}}
\setlist[enumerate,1]{label={\(\mathrm{(\arabic*)}\)}, align=left, left=0pt}

\providecommand{\theoremname}{Theorem}
\providecommand{\examplename}{Example}
\@ifundefined{thm}{\theoremstyle{plain}\newtheorem{thm}{\protect\theoremname}}{}
\@ifundefined{example}{\theoremstyle{definition}\newtheorem{example}[thm]{\protect\examplename}}{}

\makeatother

\providecommand\theoremname{Theorem}
\providecommand\examplename{Example}

\title{Duality Theory for Sequent Algebras}
\author[1,2]{Sergio A. Celani\orcidlink{0000-0003-2542-4128}\medskip\email{scelani@exa.unicen.edu.ar}}
\author[1,2]{William Zuluaga\orcidlink{0000-0002-8798-9493}\medskip \email{wizubo@gmail.com}}
\affil[1]{Departamento de Matem\'atica, Facultad de Ciencias Exactas (UNCPBA), Pinto 399, Tandil, Argentina}
\affil[2]{CONICET, Argentina}
\date{}
\begin{document}
\maketitle
\begin{abstract}
Sequent algebras were introduced by D. Vakarelov \cite{VakarelovSequentALg} as a generalization
of contact algebras, replacing the binary contact relation with a
relation between finite sets of regions and regions. In this paper,
we present sequent algebras as Boolean algebras endowed with a finitary
closure operator satisfying four conditions. We study the irreducible
closed sets of these operators, which coincide with Vakarelov's clans,
and use them to develop a topological representation theory for sequent
algebras. We introduce the categories of sequent algebras and sequent
spaces and establish a dual equivalence between them, as well as a
restricted dual equivalence for morphisms preserving the full Boolean
structure. We also characterize sequent congruences in terms of positive
filters and minimally decreasing closed subsets of the associated
sequent spaces. Finally, we show that the category of sequent algebras
with sequent homomorphisms is topological over the category of Boolean
algebras, obtaining completeness and cocompleteness and a complete
lattice structure on the sequent structures over each Boolean algebra.
\end{abstract}
Keywords: Sequent algebra; contact algebra; closure operator; clan;
mereotopology; regular closed set; topological representation; duality;
subordination; topological category.

2020 Mathematics Subject Classification: Primary 06E25; Secondary
03G05, 06E15, 54E05, 18B30.

\section{Introduction}

Region-based theories of space, also known as mereotopology, take regions rather than points as the primitive objects of space. A standard algebraic setting for these theories is given by contact algebras, that is, Boolean algebras endowed with a binary contact relation, whose prototypical models are the Boolean algebras $\mathrm{RC}(X)$ of regular closed subsets of a topological space $X$, where two regions are in contact when they have a common point.

In \cite{VakarelovSequentALg}, D. Vakarelov introduced sequent algebras as a generalization of contact algebras. In a sequent algebra, the contact relation is replaced by a relation $a_1,\ldots,a_n\vdash b$ between finite sets of regions and regions, which, in the standard model $\mathrm{RC}(X)$, means that $a_1\cap\cdots\cap a_n\subseteq b$. The relation $\vdash$ satisfies some formal properties of a Tarski consequence relation \cite{T1930}, and the contact relation is recovered by declaring that $a$ and $b$ are in contact when $a,b\nvdash 0$. Vakarelov developed a topological representation theory for sequent algebras, establishing a correspondence between point-free and point-based models of space. Related $n$-ary generalizations of contact, the hypercontact semilattices, were recently studied by P. Lipparini \cite{Lipparini2025}, and a duality for mereotopological spaces based on a notion of mereocompactness was developed by R. Goldblatt and M. Grice \cite{Goldblatt Mereocompactness}.

The aim of this paper is to study sequent algebras from the perspective of closure operators, in the spirit of abstract algebraic logic \cite{F2016}. We show that a sequent algebra can be equivalently described as a Boolean algebra $A$ endowed with a finitary closure operator $C$ satisfying four conditions: $C(X)$ is contained in the filter generated by $X$, $C(\varnothing)=\{1\}$, every $C$-closed set is an upset of $A$, and a join condition holds. This description allows us to apply the general theory of closure systems. In particular, we show that the irreducible closed sets, which coincide with Vakarelov's clans, form a base of the closure system and play the role of points in the associated dual space.

Our main results concern the categorical duality for sequent algebras, the characterization of their congruences, and the categorical structure of the resulting categories. First, we introduce sequent spaces and establish a dual equivalence between the category of sequent algebras with sequent semi-homomorphisms and the category of sequent spaces with sequent semi-morphisms. We also show that this duality restricts to a dual equivalence between the categories whose morphisms preserve the full Boolean structure. Second, we introduce sequent congruences and characterize them in terms of positive filters. We prove that positive filters coincide with the regular filters determined by a natural subordination associated with the sequent relation. Moreover, the lattice of sequent congruences is isomorphic to the lattice of regular filters and dually isomorphic to the lattice of minimally decreasing closed subsets of the associated sequent space. Finally, we show that the category of sequent algebras with sequent homomorphisms is topological over the category of Boolean algebras. This result yields, among other consequences, the existence of discrete and indiscrete functors, completeness and cocompleteness, and a complete lattice structure on the set of sequent structures that can be defined on a fixed Boolean algebra. In contrast, we show that the category of sequent algebras with sequent semi-homomorphisms is not topological over the category of Boolean algebras and join-semi-homomorphisms preserving $1$.

The paper is organized as follows. In Section 2 we fix notation and recall basic facts about Boolean algebras, closure operators, closure systems, and regular closed sets. In Section 3 we recall the definition of sequent algebra given in \cite{VakarelovSequentALg} and prove that sequent algebras are equivalent to Boolean algebras endowed with a finitary closure operator satisfying the conditions $\mathrm{Fi}$, $\mathrm{One}$, $\mathrm{Up}$, and $\mathrm{Jo}$. We also relate sequent algebras to the hypercontact semilattices studied in \cite{Lipparini2025}. In Section 4 we study irreducible closed subsets of Boolean closure spaces. We prove a separation lemma, show that irreducible closed sets form a base of the closure system, characterize them as join-prime closed sets, and identify them with the clans of \cite{VakarelovSequentALg}.

In Section 5 we introduce sequent spaces and study their basic properties. In Section 6 we define the relevant categories of sequent algebras and sequent spaces, construct canonical representations for both types of structures, and establish the corresponding dual equivalences. In Section 7 we study positive filters, regular filters, and sequent congruences. We prove that positive and regular filters coincide and establish the correspondence between sequent congruences, regular filters, and minimally decreasing closed subsets of the dual space. Finally, in Section 8 we prove that the category of sequent algebras with sequent homomorphisms is topological over the category of Boolean algebras. We derive several consequences of this result, including the existence of discrete and indiscrete functors, completeness and cocompleteness, and the fact that the sequent structures on a fixed Boolean algebra form a complete lattice. We conclude by showing that the analogous topologicality result fails for the category of sequent algebras with sequent semi-homomorphisms over the category of Boolean algebras and join-semi-homomorphisms preserving $1$.

\section{Preliminaries}

The set-theoretical complement of a subset $Y\subseteq X$ will be
denoted by $Y^{c}$. Let $Z\subseteq X$ and $W\subseteq Y$. If $f\colon X\to Y$ is a map, we write $f[Z]$ for the direct image of $Z$ under $f$ and $f^{-1}[W]$ for the inverse image of $W$ under $f$.

For a Boolean algebra $\mathbf{A}$, $\Fi(\mathbf{A})$, $\mathrm{Id}(\mathbf{A})$ and $\Ul(\mathbf{A})$ denote the sets
of all filters, ideals and ultrafilters of $\mathbf{A}$, respectively. The universe of $\mathbf{A}$ is denoted by $A$. The filter generated by a
subset $X\subseteq A$ is defined as: 
\[
\mathrm{Fg}(X)\coloneqq\left\{ a\in A:\exists\{b_{1},\ldots,b_{n}\}\subseteq X\left(b_{1}\wedge\ldots\wedge b_{n}\leq a\right)\right\} .
\]
If $X=\left\{ a\right\}$, we write $\mathrm{Fg}(a)$. Note that
$\mathrm{Fg}(a)={\uparrow}a=\{b\in A\colon a\leq b\}$.

The family of open sets of the Stone space $\Ul(\mathbf{A})$ associated to
a Boolean algebra $\mathbf{A}$ is denoted by $\tau$. The Stone mapping $\varphi_{\mathbf{A}}:A\rightarrow\mathcal{P}(\Ul(\mathbf{A}))$
is given by 
\[
\varphi_{\mathbf{A}}(a)\coloneqq\left\{ U\in\Ul(\mathbf{A}):a\in U\right\} .
\]
As is well known, the family $\{\varphi_{\mathbf{A}}(a):a\in A\}$ forms a
field of subsets of $\Ul(\mathbf{A})$, and therefore is a Boolean subalgebra
of the algebra $\mathcal{P}(\Ul(\mathbf{A}))$.

\medskip{}

We recall that a \emph{closure} \emph{operator} on a set $A$ is a
mapping 
\[
\mathsf{\mathrm{C}}:\mathcal{P}(A)\rightarrow\mathcal{P}(A)
\]
satisfying the following conditions: 
\begin{enumerate}
\item[$(\mathrm{C1})$] $X\subseteq\mathrm{C}(X)$, 
\item[$(\mathrm{C2})$] If $X\subseteq Y$, then $\mathrm{C}(X)\subseteq\mathrm{C}(Y)$, 
\item[$(\mathrm{C3})$] $\mathrm{C}(\mathrm{C}(X))\subseteq\mathrm{C}(X)$. 
\end{enumerate}
A closure operator $\mathrm{C}$ is \emph{finitary} if 
\begin{enumerate}
\item[$(\mathrm{C}4)$] $\mathrm{C}(X)=\bigcup\left\{ \mathrm{C}(Y):Y\in\mathcal{P}_{f}(X)\right\}, $ 
\end{enumerate}
where $\mathcal{P}_{f}(X)=\{Y\subseteq X \colon Y\; \text{is finite}\}$.

A \emph{closure system} on a set
$A$ is a family $\mathcal{C}$ of subsets of $A$ such that $A\in\mathcal{C}$
and $\mathcal{C}$ is closed under arbitrary intersections. A subfamily $\mathcal{D}\subseteq \mathcal{C}$ is \emph{upward-directed} if for every $X,Y\in \mathcal{D}$ there exists $Z\in \mathcal{D}$ such that $X\subseteq Z$ and $Y\subseteq Z$. A closure
system $\mathcal{C}$ is \emph{inductive} if it is closed
under unions of nonempty upward-directed subfamilies.

There exists a bijective correspondence between closure operators
and closure systems on $A$. If $\mathrm{C}$ is a closure operator
on $A$, then the family $\mathcal{C}_{\mathrm{C}}=\left\{ X\subseteq A:\mathrm{C}(X)=X\right\} $
is a closure system. The elements of $\mathcal{C}_{\mathrm{C}}$ are
called $\mathrm{C}$-closed subsets of $A$, or simply $\mathrm{C}$-closed subsets, when no confusion arises. If $\mathcal{C}$ is a closure
system on $A$, then the mapping $\mathrm{C}_{\mathcal{C}}:\mathcal{P}(A)\rightarrow\mathcal{P}(A)$
defined by $\mathrm{C}_{\mathcal{C}}(X)=\bigcap\left\{ Y\in\mathcal{C}:X\subseteq Y\right\} $,
for each $X\subseteq A$, is a closure operator. Theorem 1.27 of \cite{F2016} shows that these two correspondences are inverse of each other. Under these
bijection, it is easy to see that a closure operator $\mathrm{C}$
is finitary if and only if $\mathcal{C}_{\mathrm{C}}$ is inductive.

A \emph{closure} \emph{space} is a pair $\left<A,\mathrm{C}\right>$
where $A$ is a non-empty set and $\mathrm{C}$ is a closure operator
on $A$. Given the bijective correspondence between closure operators
and closure systems, we can equivalently represent $\left<A,\mathrm{C}\right>$
as $\left<A,\mathcal{C}\right>,$ where $\mathcal{C}=\mathcal{C}_{\mathrm{C}}.$
When $\mathbf{A}$ is a Boolean algebra, we denote the closure space $\left<A,\mathrm{C}\right>$ by $\left<\mathbf{A},\mathrm{C}\right>$. In such a case, the closure space 
is referred to as a \emph{Boolean closure space}. If $\mathrm{C}$
is finitary, then $\left<\mathbf{A},\mathrm{C}\right>$ is called a \emph{finitary
Boolean closure space}.

Observe that $\mathrm{Fi}(\mathbf{A})$
is a closure system on $A$ and the closure operator associated to $\mathrm{Fi}(\mathbf{A})$
is the filter generated by a set, i.e., $\mathrm{C}_{\mathrm{Fi}(\mathbf{A})}(X)=\mathrm{Fg}(X)$,
with $X\subseteq A$. Therefore, $\left<\mathbf{A},\mathrm{Fg}\right>$ is a finitary
Boolean closure space.

\medskip{}

Let $X$ be a topological space. For each $U\subseteq X$, let $\In U$
and $\cl U$ denote the topological interior and closure of $U$,
respectively. A set $U\subseteq X$ is called \emph{regular closed}
if $U=\cl(\In(U)).$ It is well known that the collection $\mathrm{RC}(X)$
of regular closed subsets of $X$ forms a complete Boolean algebra
where $\emptyset$ is the bottom element, $X$ is the top element,
and where the operations are defined as $-U=\cl(X\setminus U)$, $U\vee V=U\cup V$,
and $U\wedge V=\cl(\In(U\cap V))$. The infinite meets and joins in
$\mathrm{RC}(X)$ are given by $\bigvee U_{i}=\cl(\bigcup U_{i})$
and $\bigwedge U_{i}=\cl(\In(\bigcap U_{i}))$, respectively.

\section{Sequent algebras}

In this section, we recall the definition of sequent algebras as given
by D. Vakarelov \cite{VakarelovSequentALg}, and we propose an alternative
formulation by means of closure operators subject to some additional
conditions.

\begin{definition} \cite{VakarelovSequentALg}\label{def: S-algebras} A sequent Boolean
algebra, or \textbf{$S$}-algebra, is a pair $\left<\mathbf{A},\vdash\right>$,
where $\mathbf{A}$ is a Boolean algebra and $\vdash$ is a subset of $\mathcal{P}_{f}(A)\times A$
satisfying the following conditions: 
\begin{enumerate}
\item $a\vdash a$, 
\item If $X\vdash a$, then $X,b\vdash a$, 
\item If $X\vdash a$ and $X,a\vdash b$, then $X\vdash b$, 
\item $a\vdash b$ if and only if $a\leq b$. 
\item $\emptyset\vdash a$ if and only if $a=1$, 
\item If $X,a\vdash c$ and $X,b\vdash c$, then $X,a\vee b\vdash c$. 
\end{enumerate}
\end{definition}

The relation $\vdash$ can be extended to a relation $\vdash_{f}\subseteq\mathcal{P}(A)\times A$
as follows: 
\[
X\vdash_{f}a\;\text{ iff }\;\exists X_{0}\in \mathcal{P}_{f}(X)\left(X_{0}\vdash a\right).
\]
This allows defining a closure operator $\mathrm{C}_{\vdash}:\mathcal{P}(A)\rightarrow\mathcal{P}(A)$
by 
\[
\mathrm{C}_{\vdash}(X)\coloneqq\left\{ a\in A:X\vdash a\right\} ,
\]
for each $X\subseteq A$. In this work, we will only consider finitary
closure operators. Therefore, it makes no difference whether we write
$\vdash$ or $\vdash_{f}$.

\begin{lemma}\label{props SequentALg} \cite{VakarelovSequentALg}
Let $\left<\mathbf{A},\vdash\right>$ be an $S$-algebra. Then the following
conditions hold: 
\begin{enumerate}
\item If $X,a,b\vdash c$, then $X,a\wedge b\vdash c$. 
\item If $a_{1},\ldots,a_{n}\vdash b$, then $a_{1}\wedge\ldots\wedge a_{n}\leq b$. 
\item If $X,a\vdash b$ and $c\leq a$, then $X,c\vdash b$. 
\item If $X\vdash a$ and $a\leq b$, then $X\vdash b$. 
\item $X\vdash1$. 
\item If $X,a\vdash b$ and $X\vdash a\vee b$, then $X\vdash b$. 
\item If $X_{1},a_{1}\vdash b_{1}$ and $X_{2},a_{2}\vdash b_{2}$, then
$X_{1},X_{2},a_{1}\vee a_{2}\vdash b_{1}\vee b_{2}$. 
\end{enumerate}
\end{lemma}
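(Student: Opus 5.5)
We derive each item directly from conditions (1)--(6) of Definition~\ref{def: S-algebras}. Two derived rules are used throughout: \emph{weakening}, which is condition (2) iterated, and \emph{cut}, which is condition (3). We combine them with condition (4) to move between $\vdash$ and $\leq$. Item (2) is the only one that needs condition (6) in an essential way, and we expect it to be the main obstacle.

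We first prove (4) and (3). For (4), suppose $X\vdash a$ and $a\leq b$. Then $a\vdash b$ by condition (4), so $X,a\vdash b$ by weakening, and cut gives $X\vdash b$. For (3), suppose $X,a\vdash b$ and $c\leq a$. Then $c\vdash a$, so $X,c\vdash a$ by weakening. Weakening $X,a\vdash b$ gives $X,c,a\vdash b$, and cut with $X,c\vdash a$ yields $X,c\vdash b$. Item (5) follows because $\emptyset\vdash 1$ by condition (5), and weakening gives $X\vdash 1$.

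Next we prove (1). Suppose $X,a,b\vdash c$. Since $a\wedge b\leq a$ and $a\wedge b\leq b$, weakening gives $X,a\wedge b\vdash a$ and $X,a\wedge b\vdash b$. Weakening the hypothesis gives $X,a\wedge b,a,b\vdash c$. We cut $b$ to get $X,a\wedge b,a\vdash c$, and then cut $a$ to get $X,a\wedge b\vdash c$.

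Item (2) is the key step. Suppose $a_1,\ldots,a_n\vdash b$, and set $m=a_1\wedge\cdots\wedge a_n$. By repeatedly applying (3) we get $m\vdash b$; if $n=0$, then $b=1$ by condition (5). Condition (4) then gives $m\leq b$. Alternatively, we can argue by contradiction. Note that $m,\neg b\vdash b$ and $m,\neg b\vdash\neg b$. By condition (6) we try to reach $m,\neg b\vdash 0$ and then conclude $m\wedge\neg b=0$. The first route is cleaner, since condition (4) already identifies single-premise sequents with $\leq$, so we will use it.

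For (6), suppose $X,a\vdash b$ and $X\vdash a\vee b$. Also $X,b\vdash b$ by weakening. Condition (6) gives $X,a\vee b\vdash b$, and cut with $X\vdash a\vee b$ yields $X\vdash b$.

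Finally, for (7), suppose $X_1,a_1\vdash b_1$ and $X_2,a_2\vdash b_2$. By (4) we get $X_1,a_1\vdash b_1\vee b_2$ and $X_2,a_2\vdash b_1\vee b_2$. Weakening extends both to the combined premise set $X_1,X_2$. Condition (6) then gives $X_1,X_2,a_1\vee a_2\vdash b_1\vee b_2$.

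Throughout, we treat premise lists as finite sets, so repeated premises and their order are harmless.
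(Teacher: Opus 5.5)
The paper gives no proof of this lemma; it is quoted from Vakarelov's paper, so there is no in-text argument to compare against. Your item-by-item derivation from conditions (1)--(6) of Definition~\ref{def: S-algebras} is correct and is the standard one. A few points need tidying.

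\begin{itemize}
\item Your opening claim that item (2) ``needs condition (6) in an essential way'' is wrong, and your own proof contradicts it. Item (2) follows from conditions (2)--(5) alone. Apply item (3) of the lemma $n$ times, replacing each $a_i$ by $m\le a_i$; duplicates collapse because premises form a set. Then use condition (4), or condition (5) when $n=0$. Condition (6) is used only for items (6) and (7).
\item Write ``item (3) of the lemma'' (or item (1)) there rather than just ``(3)''. You had reserved ``condition (3)'' for cut, and cut alone does not yield $m\vdash b$.
\item In (1), the sequents $X,a\wedge b\vdash a$ and $X,a\wedge b\vdash b$ come from condition (4) followed by weakening. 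Cutting $b$ also requires $X,a\wedge b,a\vdash b$, again by weakening. More simply, (1) is item (3) applied twice: first with $a\wedge b\le a$, then with $a\wedge b\le b$.
\end{itemize}

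Finally, delete the abandoned ``alternative by contradiction'' rather than merely setting it aside. It is not just less clean; its intermediate target $m,\neg b\vdash 0$ is false in general, even when $m\le b$. In $\mathbf{RC}(\mathbb{R})$ take $n=1$ and $a_1=b=[0,1]$. Then $a_1\vdash b$, but $b\cap -b=\{0,1\}\neq\emptyset$, so $b,\neg b\nvdash 0$. Condition (6) only combines premises, not conclusions, so it cannot produce that sequent. This is precisely the contact information that $\vdash$ carries beyond the Boolean order.
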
 
\begin{example}
\cite{VakarelovSequentALg} Let $X$ be a topological space. Let $\mathbf{RC}(X)$
be the Boolean algebra of regular closed subsets of $X$. For a finite
subset $\left\{ a_{1},\ldots,a_{n}\right\} \subseteq\mathrm{RC}(X)$
and $b\in\mathrm{RC}(X)$ define a consequence relation in $\mathrm{RC}(X)$
as follows: 
\[
a_{1},\ldots,a_{n}\vdash b\text{ iff }a_{1}\cap\ldots\cap a_{n}\subseteq b.
\]
Then $\langle\mathbf{RC}(X),\vdash\rangle$ is a sequent algebra. We note that
the relation $\vdash$ can be defined as follows: 
\[
a_{1},\ldots,a_{n}\vdash b\text{ iff }b\in\mathrm{Fg}(\{a_{1},\ldots,a_{n}\}),
\]
where $\mathrm{Fg}(\{a_{1},\ldots,a_{n}\})$ is the filter generated
by the finite set $\{a_{1},\ldots,a_{n}\}$ in the Boolean algebra
$\langle \mathcal{P}(X), \cup, \cap, ^{c},\emptyset, X\rangle$. 
\end{example}

We now provide another characterization of sequent algebras using finitary
closure operators.

\begin{proposition} \label{prop: charact closure}Let $\mathbf{A}$ be a Boolean
algebra. Let $\mathsf{C}:\mathcal{P}(A)\rightarrow\mathcal{P}(A)$
be a finitary closure operator on $A$ satisfying the following conditions
for each $X\cup\left\{ a,b\right\} \subseteq A$: 
\begin{enumerate}
\item[$\mathbf{Fi}$] $\mathrm{C}(X)\subseteq\mathrm{Fg}(X)$. 
\item[$\mathbf{One}$] $\mathrm{C}(\emptyset)=\{1\}$.  
\item[$\mathbf{Up}$] $\mathrm{C}(X)\in\mathrm{Up}(A)$. 
\item[$\mathbf{Jo}$] $\mathrm{C}(X,a)\cap\mathrm{C}(X,b)\subseteq\mathrm{C}(X,a\vee b)$. 
\end{enumerate}
Define the relation ${\vdash_{\mathrm{C}}}\subseteq\mathcal{P}_{f}(A)\times A$
given by 
\[
X\vdash_{\mathrm{C}}b\;\text{ iff }\;\exists Y\in\mathcal{P}_{f}(X) (b\in\mathrm{C}(Y)).
\]
Then $\left<A,\vdash_{\mathrm{C}}\right>$ is an $S$-algebra.

Conversely, if $\left<\mathbf{A},\vdash\right>$ is an $S$-algebra, then the
mapping $\mathrm{C}_{\vdash}:\mathcal{P}(A)\rightarrow\mathcal{P}(A)$
given by 
\[
\mathrm{C}_{\vdash}(X)\coloneqq\left\{ a\in A:\exists Y\in\mathcal{P}_{f}(X)(Y\vdash a)\right\} ,
\]
is a finitary closure operator satisfying the conditions $\mathbf{Fi}$,
$\mathbf{One}$, $\mathbf{Up}$, and $\mathbf{Jo}$. Moreover, $\mathrm{C}=\mathrm{C}_{\vdash_{\mathrm{C}}}$
and ${\vdash}={\vdash_{\mathrm{C}_{\vdash}}}$.

\end{proposition}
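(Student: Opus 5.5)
We verify the two directions separately and then check that the assignments are mutually inverse. Throughout, $\mathrm{C}(X,a)$ abbreviates $\mathrm{C}(X\cup\{a\})$.

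\textbf{From $\mathrm{C}$ to $\vdash_{\mathrm{C}}$.} We check the six axioms of Definition \ref{def: S-algebras}. Axiom (1) is (C1). Axiom (2) is immediate from the existential quantifier over finite subsets, using (C2).

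For the cut axiom (3), suppose $a\in\mathrm{C}(Y_1)$ and $b\in\mathrm{C}(Y_2\cup\{a\})$ with $Y_1,Y_2\subseteq X$ finite. Put $Y=Y_1\cup Y_2$. Then $Y\cup\{a\}\subseteq\mathrm{C}(Y)$, so by (C2) and (C3) we get $b\in\mathrm{C}(\mathrm{C}(Y))=\mathrm{C}(Y)$.

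For axiom (4), if $a\le b$ then $b\in\mathrm{C}(a)$ follows from $a\in\mathrm{C}(a)$ and $\mathbf{Up}$. Conversely, $b\in\mathrm{C}(Y)$ with $Y\subseteq\{a\}$ gives $b\in\mathrm{Fg}(Y)\subseteq{\uparrow}a$ by $\mathbf{Fi}$. Here the case $Y=\emptyset$ gives $b=1$ by $\mathbf{One}$.

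Axiom (5) is exactly $\mathbf{One}$: the only finite subset of $\emptyset$ is $\emptyset$.

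For axiom (6), suppose $c\in\mathrm{C}(Y_1\cup\{a\})\cap\mathrm{C}(Y_2\cup\{b\})$ with $Y_i\subseteq X$ finite. Enlarge both to $Y=Y_1\cup Y_2$ by (C2); then $\mathbf{Jo}$ yields $c\in\mathrm{C}(Y,a\vee b)$. (If the witnessing finite set does not contain $a$, enlargement still works.)

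\textbf{From $\vdash$ to $\mathrm{C}_\vdash$.} First we check the closure axioms.
\begin{itemize}
\item (C1) uses $a\vdash a$.
\item (C2) is trivial.
\item Finitarity holds by construction.
\item (C3) is the main point. Suppose $b$ follows from finitely many $a_1,\dots,a_n\in\mathrm{C}_\vdash(X)$, each $a_i$ derived from a finite $Y_i\subseteq X$. Let $Y=\bigcup Y_i$ and weaken by (2) so that $Y\vdash a_i$ for all $i$, and also $Y,a_1,\dots,a_n\vdash b$. Then apply cut (3) $n$ times, inductively removing $a_n,\dots,a_1$: from $Y\vdash a_n$ (weakened to $Y,a_1,\dots,a_{n-1}\vdash a_n$) and $Y,a_1,\dots,a_n\vdash b$, obtain $Y,a_1,\dots,a_{n-1}\vdash b$.
\end{itemize}
Next the four conditions.
\begin{itemize}
\item $\mathbf{Fi}$ is Lemma \ref{props SequentALg}(2), with the empty case handled by (5).
\item $\mathbf{One}$ is (5) together with Lemma \ref{props SequentALg}(5).
\item $\mathbf{Up}$ is Lemma \ref{props SequentALg}(4).
\item $\mathbf{Jo}$ follows from (6) after weakening the two finite witnesses to a common finite $Y\subseteq X$. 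If a witness omits $a$ (or $b$), we still weaken to include it, since only $X\cup\{a\}$ and $X\cup\{b\}$ are involved.
\end{itemize}
One subtle point: in $\mathbf{Jo}$ the element $a$ may already lie in $X$. Axiom (6) is stated for finite sets written as $X,a$, which is compatible with this.

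\textbf{Inverse correspondences.} For $\mathrm{C}=\mathrm{C}_{\vdash_{\mathrm{C}}}$, unfold the definitions: $a\in\mathrm{C}_{\vdash_{\mathrm{C}}}(X)$ iff some finite $Y\subseteq X$ has a finite $Z\subseteq Y$ with $a\in\mathrm{C}(Z)$. By finitarity (C4) and monotonicity this is exactly $a\in\mathrm{C}(X)$.

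For ${\vdash}={\vdash_{\mathrm{C}_\vdash}}$ with $X$ finite: $X\vdash_{\mathrm{C}_\vdash}b$ iff some $Z\subseteq Y\subseteq X$ finite has $Z\vdash b$. By weakening (2) this is equivalent to $X\vdash b$.

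The only step with real content is the iterated cut in (C3). Everything else is bookkeeping with weakening to a common finite set.
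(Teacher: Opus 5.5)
Your proof is correct and follows the same route as the paper. For $\vdash_{\mathrm{C}}$ you check axioms (1)–(6) exactly as the paper does: cut via $\mathrm{C}(X\cup\{a\})\subseteq\mathrm{C}(\mathrm{C}(X))=\mathrm{C}(X)$, (4) via $\mathbf{Up}$ and $\mathbf{Fi}$, (5) via $\mathbf{One}$, (6) via $\mathbf{Jo}$; for the converse and the inverse correspondences, which the paper only calls easy, your iterated-cut argument for idempotence and the weakening to a common finite witness supply precisely the omitted details.
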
 
\begin{proof}
We prove that $\left<\mathbf{A},\vdash_{\mathrm{C}}\right>$ is an $S$-algebra. To do so, let $a,b\in A$ and let $X\in\mathcal{P}_{f}(A)$. 
\begin{itemize}
\item[(1)] Since $a\in \mathrm{C}(a)$, we have $a\vdash_{\mathrm{C}}a$.
\item[(2)] If $X\vdash_{\mathrm{C}}a$, then $a\in\mathrm{C}(X)\subseteq\mathrm{C}\left(X\cup\left\{ b\right\} \right)$,
i.e., $X,b\vdash_{\mathrm{C}}a$.
\item[(3)] If $X\vdash_{\mathrm{C}}a$ and $X,a\vdash_{\mathrm{C}}b$, then $a\in\mathrm{C}(X)$
and $b\in\mathrm{C}\left(X\cup\left\{ a\right\} \right)$. So \[b\in\mathrm{C}\left(X\cup\left\{ a\right\} \right)\subseteq\mathrm{C}\left(X\cup\mathrm{C}(X)\right)=\mathrm{C}\left(\mathrm{C}(X)\right)=\mathrm{C}(X).\] Therefore, $X\vdash_{\mathrm{C}}b$.
\item[(4)] Now we show 
\[
a\leq b\text{ iff }a\vdash_{C}b.
\]
Let $a\leq b$. As $a\in\mathrm{C(a)},$ by $\mathbf{\mathbf{Up}},$
$b\in\mathrm{C}(a)$, so $a\vdash_{C}b$. Conversely, if $a\vdash_{C}b$,
then $b\in\mathrm{C}(a)\subseteq\mathrm{Fg}(a)$ (by $\mathbf{Fi}$),
thus, $a\leq b$.
\item[(5)] Suppose that $\emptyset\vdash_{\mathrm{C}}a$. Then, $a\in\mathrm{C}(\emptyset)\subseteq\mathrm{Fg}(\emptyset)=\left\{ 1\right\} $,
hence $a=1$. If $a=1$, then $a\in\mathrm{C}(\emptyset)$ by $\mathbf{One}$,
therefore $\emptyset\vdash_{\mathrm{C}}a$. 
\item[(6)] By condition $\mathbf{Jo}$, we have that if $X,a\vdash_{\mathrm{C}}c$
and $X,b\vdash_{\mathrm{C}}c$, then $X,a\vee b\vdash_{\mathrm{C}}c$. 
\end{itemize}
Then $\left<\mathbf{A},\vdash_{\mathrm{C}}\right>$ is an $S$-algebra. Conversely, if $\langle\mathbf{A},\vdash\rangle$ is an $S$-algebra, then it is easy to see that $\mathrm{C}_{\vdash}$ is a finitary closure operator satisfying conditions $\mathbf{Fi}$, $\mathbf{One}$, $\mathbf{Up}$, and $\mathbf{Jo}$. Moreover, it readily follows that $\mathrm{C}=\mathrm{C}_{\vdash_{\mathrm{C}}}$ and ${\vdash} = {\vdash_{\mathrm{C}_{\vdash}}}$.

\end{proof}

By this result, the notion of a sequent algebra is equivalent to that of a Boolean algebra endowed with a closure operator satisfying
the properties $\mathbf{Fi}$, $\mathbf{One}$, $\mathbf{Up}$, and
$\mathbf{Jo}$. Thus, in all that follows, we will consider
a sequent algebra as a pair $\left<\mathbf{A},\vdash\right>$ or a pair $\left<\mathbf{A},\mathrm{C}\right>$. 

\begin{remark}\label{rem: order C} Let $\left<\mathbf{A},\mathrm{C}\right>$
be an $S$-algebra. By the proof of Proposition \ref{prop: charact closure}
we have that $\mathrm{C}(a)=\mathrm{Fg}(a)$, for each $a\in A$.
Thus, the lattice order $\leq$ is determined by $\mathrm{C}$, because
\[
a\leq b\text{ iff }\mathrm{C}(b)\subseteq\mathrm{C}(a).
\]
\end{remark} 
\begin{remark}\label{additivity}
Let $\mathbf{A}$ be a Boolean algebra and let
$\mathrm{C}$ be a closure operator on $A$ satisfying the property
$\mathbf{Jo}$ of Proposition \ref{prop: charact closure}. Then 
\[
\mathrm{C}(X,a)\cap\mathrm{C}(X,b)=\mathrm{C}(X,a\vee b),
\]
for any $X\cup\left\{ a,b\right\} \subseteq A$. Indeed, since $a,b\leq a\vee b$,
we have 
\[
\mathrm{C}(a\vee b)\subseteq\mathrm{C}(a)\cap\mathrm{C}(b)\subseteq\mathrm{C}\left(X,a\right)\cap\mathrm{C}\left(X,b\right).
\]
Hence $a\vee b\in\mathrm{C}\left(X,a\right)\cap\mathrm{C}\left(X,b\right)$.
Since $X\subseteq\mathrm{C}(X)\subseteq\mathrm{C}\left(X,a\right)\cap\mathrm{C}\left(X,b\right)$,
it follows that $X\cup\left\{ a\vee b\right\} \subseteq\mathrm{C}\left(X,a\right)\cap\mathrm{C}\left(X,b\right)$. Therefore,  
\[\mathrm{C}(X\cup\left\{ a\vee b\right\})\subseteq \mathrm{C}(\mathrm{C}\left(X,a\right)\cap\mathrm{C}\left(X,b\right))\subseteq \mathrm{C}\left(X,a\right)\cap\mathrm{C}\left(X,b\right),\] 
which establishes the equality.
\end{remark}

Next, we recall the definition of a hypercontact semilattice (lattice)
introduced in \cite{Lipparini2025}.

\begin{definition}\label{def:hypercontact semilattice}Let $\mathbf{L}=\left<L,\vee,0\right>$
be a join-semilattice. A subfamily $\Delta$ of $\mathcal{P}_{f}(L)$
is a hypercontact family if it satisfies the following conditions: 
\begin{enumerate}
\item If $\left\{ a_{1},\ldots,a_{n}\right\} \in\Delta$, then $a_{i}\neq0$,
for all $i\in\left\{ 1,\ldots,n\right\} $. 
\item If $U\subseteq V$ and $V\in\Delta$, then $U\in\Delta$. 
\item If $\left\{ a_{1},\ldots,a_{i},\ldots,a_{n}\right\} \in\Delta$ and
$a_{i}\leq b$, then $\left\{ a_{1},\ldots,a_{i},\ldots,a_{n},b\right\} \in\Delta$. 
\item If $b\neq0$, then $\left\{ b\right\} \in\Delta$. 
\end{enumerate}
A hypercontact semilattice (lattice) is a pair $\left<\mathbf{L},\Delta\right>$
such that $\mathbf{L}$ is a join-semilattice (lattice) and $\Delta$ is a
hypercontact family on $L$.

A hypercontact semilattice $\left<\mathbf{L},\Delta\right>$ is \emph{additive} if
$\left\{ a\vee b,c_{1},\ldots,c_{n}\right\} \in\Delta$ implies that
$\left\{ a,c_{1},\ldots,c_{n}\right\} \in\Delta$ or $\left\{ b,c_{1},\ldots,c_{n}\right\} \in\Delta$.

\end{definition}

Let $\left<L,\Delta\right>$ be a hypercontact semilattice. If $b\in L$,
$b\neq0$, and
\[b\leq b_{i},\; \text{ for all } i\in \{1,\ldots, n\},\]
by (4) and
(3) of Definition \ref{def:hypercontact semilattice}, we get  $\left\{ b,b_{1},\ldots,b_{n}\right\} \in\Delta$.
Moreover, by (2), it follows that $\left\{ b_{1},\ldots,b_{n}\right\} \in\Delta$.
Similarly, if $\left\{ a_{1},\ldots,a_{n}\right\} \in\Delta$ and
\[a_{i}\leq b_{i}\; \text{ for all } i\in \{1,\ldots, n\},\]
then, by (3) we obtain $\left\{ b_{1},\ldots,b_{n}\right\} \in\Delta$.

\begin{lemma}Let $\left<\mathbf{A},\mathrm{C}\right>$ be an $S$-algebra.
Then the family 
\[
\Delta_{\mathrm{C}}=\left\{ \left\{ a_{1},\ldots,a_{n}\right\} : 0\notin\mathrm{C}(a_{1},\ldots,a_{n}) \right\} 
\]
is an additive hypercontact family.

\end{lemma}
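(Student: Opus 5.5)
We verify the four conditions of Definition \ref{def:hypercontact semilattice} and additivity directly for $\Delta_{\mathrm{C}}$. We use the $S$-algebra structure of $\left<\mathbf{A},\mathrm{C}\right>$ (reducts $\left<A,\vee,0\right>$), and throughout rely on monotonicity (C2), Remark \ref{rem: order C} ($\mathrm{C}(a)=\mathrm{Fg}(a)$), condition $\mathbf{Up}$, and Remark \ref{additivity}.

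\emph{Condition (1).} If some $a_i=0$, then $0\in\mathrm{C}(a_1,\ldots,a_n)$ by (C1), so $\{a_1,\ldots,a_n\}\notin\Delta_{\mathrm{C}}$.

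\emph{Condition (2).} Suppose $U\subseteq V$ with $V\in\Delta_{\mathrm{C}}$. By (C2), $\mathrm{C}(U)\subseteq\mathrm{C}(V)$, and since $0\notin\mathrm{C}(V)$ we get $0\notin\mathrm{C}(U)$.

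\emph{Condition (4).} Let $b\neq0$. Then $\mathrm{C}(b)={\uparrow}b$, which does not contain $0$.

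\emph{Condition (3).} Let $\{a_1,\ldots,a_n\}\in\Delta_{\mathrm{C}}$ and $a_i\leq b$. Then $b\in{\uparrow}a_i=\mathrm{C}(a_i)\subseteq\mathrm{C}(a_1,\ldots,a_n)$. Consequently $\{a_1,\ldots,a_n,b\}\subseteq\mathrm{C}(a_1,\ldots,a_n)$, and by (C2) and (C3),
\[
\mathrm{C}(a_1,\ldots,a_n,b)\subseteq\mathrm{C}(\mathrm{C}(a_1,\ldots,a_n))=\mathrm{C}(a_1,\ldots,a_n),
\]
which does not contain $0$. (Equivalently, this is the cut rule (3) of Definition \ref{def: S-algebras}.)

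\emph{Additivity.} This is the only step that uses $\mathbf{Jo}$, and it is the most substantive one, though it is short. We argue by contraposition. Suppose neither $\{a,c_1,\ldots,c_n\}$ nor $\{b,c_1,\ldots,c_n\}$ belongs to $\Delta_{\mathrm{C}}$. Writing $X=\{c_1,\ldots,c_n\}$, this means $0\in\mathrm{C}(X,a)\cap\mathrm{C}(X,b)$. By $\mathbf{Jo}$, or by Remark \ref{additivity}, this intersection is contained in $\mathrm{C}(X,a\vee b)$. Hence $0\in\mathrm{C}(X,a\vee b)$, so $\{a\vee b,c_1,\ldots,c_n\}\notin\Delta_{\mathrm{C}}$.

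One small point remains: membership in $\Delta_{\mathrm{C}}$ depends only on the underlying finite set, since $\mathrm{C}$ is applied to the set $\{a_1,\ldots,a_n\}$. Repetitions or coincidences such as $b=a_j$ therefore cause no trouble.
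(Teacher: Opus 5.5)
Your proof is correct and follows the paper's argument condition by condition. The only apparent difference is in (3), where the paper writes $b=a_i\vee b$ and cites Remark~\ref{additivity}. However, the inclusion it actually uses, $\mathrm{C}(X,a_i\vee b)\subseteq\mathrm{C}(X,a_i)=\mathrm{C}(X)$ with $X=\{a_1,\ldots,a_n\}$, is proved in that remark by exactly your $\mathbf{Up}$-plus-idempotence (cut) argument, so, as you say, $\mathbf{Jo}$ is really needed only for additivity.
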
 
\begin{proof}
If $\{a_{1},\ldots,a_{n}\} \in \Delta_{\mathrm{C}}$, then $0 \notin \mathrm{C}(a_{1},\ldots,a_{n})$. Since $\{a_{1},\ldots,a_{n}\} \subseteq \mathrm{C}(a_{1},\ldots,a_{n})$, it follows that $a_i \neq 0$ for all $i \in \{1,\ldots,n\}$, so (1) holds. 

To prove (2), let $\{a_{1},\ldots,a_{n}\} \subseteq \{b_{1},\ldots,b_{m}\}$ and $\{b_{1},\ldots,b_{m}\} \in \Delta_{\mathrm{C}}$. Then $0 \notin \mathrm{C}(b_{1},\ldots,b_{m})$. Since $\mathrm{C}$ is a closure operator, monotonicity implies that $\mathrm{C}(a_{1},\ldots,a_{n}) \subseteq \mathrm{C}(b_{1},\ldots,b_{m})$, and thus $0 \notin \mathrm{C}(a_{1},\ldots,a_{n})$. Therefore, $\{a_{1},\ldots,a_{n}\} \in \Delta_{\mathrm{C}}$.

For (3), let $\{a_{1},\ldots,a_{i},\ldots,a_{n}\} \in \Delta_{\mathrm{C}}$ and $a_{i} \leq b$. Thus, $0 \notin \mathrm{C}(a_{1},\ldots,a_{i},\ldots,a_{n})$ and $a_{i} \vee b = b$. It follows that
\[
\mathrm{C}(a_{1},\ldots,a_{n},b) = \mathrm{C}(a_{1},\ldots,a_{n},a_{i} \vee b) = \mathrm{C}(a_{1},\ldots,a_{n}) \cap \mathrm{C}(a_{1},\ldots,a_{n},b).
\]
Consequently, $\mathrm{C}(a_{1},\ldots,a_{n},b) \subseteq \mathrm{C}(a_{1},\ldots,a_{n})$. The reverse inclusion follows immediately from the monotonicity of $\mathrm{C}$. Thus, $\mathrm{C}(a_{1},\ldots,a_{n},b) = \mathrm{C}(a_{1},\ldots,a_{n})$, which implies that $0 \notin \mathrm{C}(a_{1},\ldots,a_{n},b)$, i.e., $\{a_{1},\ldots,a_{n},b\} \in \Delta_{\mathrm{C}}$.

For (4), if $b \neq 0$, then $0 \notin \mathrm{Fg}(b) = \mathrm{C}(b)$. Hence, $\{b\} \in \Delta_{\mathrm{C}}$. This completes the proof that $\Delta_{\mathrm{C}}$ is a hypercontact family. Finally, the additivity of $\Delta_{\mathrm{C}}$ is a direct consequence of the definition of $\Delta_{\mathrm{C}}$ and Remark \ref{additivity}.

\end{proof}

\section{Irreducible closed subsets}

In this section, we study irreducible closed subsets of Boolean closure spaces. We establish a separation lemma and show that these sets form a base for the closure system. We also characterize them as join-prime closed sets and relate them to Vakarelov's clans.
\medskip

We recall that a Boolean closure space is a closure space $\left<\mathbf{A},\mathrm{C}\right>$
where $\mathbf{A}$ is a Boolean algebra.

\begin{definition}\label{def: irreducible C-closed}
Let $\left<\mathbf{A},\mathrm{C}\right>$ be a Boolean
closure space. Let $F$ be a proper $\mathrm{C}$-closed subset. We say that $F$
is \emph{irreducible} if, for every $T_{1},T_{2}\in\mathcal{C}_{\mathrm{C}}$,
$T_{1}\cap T_{2}=F$ implies $F=T_{1}$ or $F=T_{2}$.

The poset of all irreducible $\mathrm{C}$-closed subsets is denoted by $X(\mathbf{A})$.
\end{definition}

The following lemma generalizes Lemma 10 from \cite{VakarelovSequentALg}.

\begin{lemma} \label{lem: separation}Let $\left<\mathbf{A},\mathrm{C}\right>$
be a finitary Boolean closure space, and suppose that $\mathrm{C}$
satisfies the property $\mathbf{Up}$ of Proposition \ref{prop: charact closure}.
Let $H\in\mathcal{C}_{\mathrm{C}}$, and $I\in\mathrm{Id}(\mathbf{A})$ such
that $H\cap I=\emptyset$. Then there exists an $F\in X(\mathbf{A})$ such that $H\subseteq F$ and $F\cap I=\emptyset$. \end{lemma}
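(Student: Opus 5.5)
We apply Zorn's lemma to the family
\[
\mathcal{F}=\{T\in\mathcal{C}_{\mathrm{C}}: H\subseteq T\ \text{and}\ T\cap I=\emptyset\},
\]
ordered by inclusion. It is nonempty because $H\in\mathcal{F}$. Since $\mathrm{C}$ is finitary, $\mathcal{C}_{\mathrm{C}}$ is inductive. Hence the union of a nonempty chain in $\mathcal{F}$ is $\mathrm{C}$-closed, and it clearly contains $H$ and misses $I$. Zorn's lemma therefore yields a maximal element $F\in\mathcal{F}$. Since $I$ is an ideal it is nonempty ($0\in I$), so $F\neq A$, and $F$ is proper.

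It remains to show that $F$ is irreducible. Suppose $T_{1},T_{2}\in\mathcal{C}_{\mathrm{C}}$ satisfy $T_{1}\cap T_{2}=F$, but $F\neq T_{1}$ and $F\neq T_{2}$. Then $F\subsetneq T_{1}$ and $F\subsetneq T_{2}$. Each $T_{i}$ contains $H$, so by maximality of $F$ in $\mathcal{F}$ neither $T_i$ belongs to $\mathcal{F}$. Hence there are $a_{1}\in T_{1}\cap I$ and $a_{2}\in T_{2}\cap I$. Put $a=a_{1}\vee a_{2}$. Because $I$ is an ideal, $a\in I$. By $\mathbf{Up}$ each $T_{i}=\mathrm{C}(T_{i})$ is an upset, and $a_i\leq a$, so $a\in T_{1}\cap T_{2}=F$. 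This contradicts $F\cap I=\emptyset$. Therefore $F\in X(\mathbf{A})$, with $H\subseteq F$ and $F\cap I=\emptyset$, as required.

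The only delicate point is the irreducibility step. It is the one place where both hypotheses are used: $I$ must be closed under joins, and the closed sets must be upsets. The Zorn step uses only finitarity. Neither $\mathbf{Fi}$ nor $\mathbf{Jo}$ is needed.
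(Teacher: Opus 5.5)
Your proof is correct and follows essentially the same route as the paper: Zorn's lemma applied to the closed sets containing $H$ and disjoint from $I$, followed by the irreducibility argument using that closed sets are upsets and that $I$ is closed under joins. You also check explicitly that $F$ is proper (since $0\in I$), which the definition of irreducibility requires and which the paper's proof leaves implicit.
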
 
\begin{proof}
We apply Zorn's lemma. Consider the family 
\[
\mathcal{F}=\left\{ G\in\mathcal{C}_{\mathrm{C}}:H\subseteq G\text{ and }G\cap I=\emptyset\right\} .
\]
Since $H\in\mathcal{F},$ we have that $\mathcal{F}\neq\emptyset$. Observe that if $\Gamma$ is a non-empty chain of elements
of $\mathcal{F}$, then it is a nonempty upward-directed subfamily of $\mathcal{C}_{\mathrm{C}}$. From this, it is readily seen that $\bigcup\Gamma\in\mathcal{F}$. Thus, by Zorn's
lemma, there exists a maximal element $F$ of $\mathcal{F}$. We prove
that $F$ is irreducible. Let $T_{1}$, $T_{2}\in\mathcal{C}_{\mathrm{C}}$
such that $F=T_{1}\cap T_{2}$. Suppose that $F\neq T_{1}$ and $F\neq T_{2}$.
Because $H\subseteq F=T_{1}\cap T_{2}$ and $F$ is maximal in $\mathcal{F}$,
we have $T_{1}\cap I\neq\emptyset$ and $T_{2}\cap I\neq\emptyset$.
Then there exist $a,b\in A$ such that $a\in T_{1}\cap I$ and $b\in T_{2}\cap I$.
By $\mathbf{Up}$, every closed subset is an upset, thus $a\vee b\in T_{1}\cap T_{2}=F$, and since $I$ is an ideal, $a\vee b\in I$.
Then, $a\vee b\in F\cap I$, which is a contradiction. Therefore, $F$ must be irreducible. 
\end{proof}

\begin{corollary} \label{cor: closed is inter of irreducible} Let
$\left<\mathbf{A},\mathrm{C}\right>$ be a finitary Boolean closure space.
Suppose that $\mathrm{C}$ satisfies the property $\mathbf{Up}$ of
Proposition \ref{prop: charact closure}. Let $H\in\mathcal{C}_{\mathrm{C}}.$
If $a\notin H$, then there exists $F\in X(\mathbf{A})$ such
that $H\subseteq F$ and $a\notin F$. Thus, every proper $\mathrm{C}$-closed subset  is contained in an irreducible $\mathrm{C}$-closed subset. 
\end{corollary}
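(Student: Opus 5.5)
The plan is to apply Lemma \ref{lem: separation} with the principal ideal $I={\downarrow}a=\{b\in A: b\leq a\}$. First, I check that the hypothesis $H\cap I=\emptyset$ holds. Suppose some $b\in H$ satisfies $b\leq a$. By $\mathbf{Up}$ the closed set $H=\mathrm{C}(H)$ is an upset, so $a\in H$, which contradicts $a\notin H$. Hence $H\cap{\downarrow}a=\emptyset$.

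Since ${\downarrow}a$ is an ideal of $\mathbf{A}$, Lemma \ref{lem: separation} gives some $F\in X(\mathbf{A})$ with $H\subseteq F$ and $F\cap{\downarrow}a=\emptyset$. Because $a\in{\downarrow}a$, this gives $a\notin F$, which proves the first assertion.

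For the second assertion, let $H$ be a proper $\mathrm{C}$-closed subset. Then there is some $a\in A\setminus H$, and the first part yields an irreducible $F\supseteq H$.

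There is no real obstacle here. The only point that needs care is using $\mathbf{Up}$ to see that the upset $H$ misses the principal ideal ${\downarrow}a$ exactly when $a\notin H$. The rest is a direct invocation of Lemma \ref{lem: separation}, whose finitarity hypothesis is already assumed.
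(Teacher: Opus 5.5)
Your proof is correct and is exactly the intended argument: the paper states this as an immediate consequence of Lemma \ref{lem: separation}, and applies it with the principal ideal ${\downarrow}a$ in the same way in the proof of Lemma \ref{lem: maps alpa and beta}(6). Your use of $\mathbf{Up}$ to get $H\cap{\downarrow}a=\emptyset$ is precisely the step needed.
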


We recall that, given a closure system $\mathcal{C}$ on a set $A$,
a family $\mathcal{F}\subseteq\mathcal{P}(A)$ is said to be a base
of $\mathcal{C}$ when $\mathcal{C}$ is the smallest closure system
that contains $\mathcal{F}$. In such a case, $\mathcal{C}$ is referred
to as the closure system generated by $\mathcal{F}$. Then, as a consequence
of Corollary \ref{cor: closed is inter of irreducible} and Proposition
1.32 of \cite{F2016}, we obtain:

\begin{proposition}\label{clans for a basis} Let $\langle\mathbf{A},\mathrm{C}\rangle$
be a sequent algebra. Then, the set of irreducible closed sets of
$A$ is a base of $\mathcal{C}_{\mathrm{C}}$. Moreover,
the following hold: 
\begin{enumerate}
\item For every closed set $H$, if $H\neq A$, then 
\[
H=\bigcap\{F\in X(\mathbf{A})\colon H\subseteq F\},
\]
\item For every $Z\subseteq A$, either $\mathrm{C}(Z)=A$
or 
\[
\mathrm{C}(Z)=\bigcap\{F\in X(\mathbf{A})\colon Z\subseteq F\}.
\]
\end{enumerate}
\end{proposition}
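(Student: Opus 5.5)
Since a sequent algebra $\langle\mathbf{A},\mathrm{C}\rangle$ is, by Proposition \ref{prop: charact closure}, a finitary Boolean closure space satisfying $\mathbf{Up}$, Corollary \ref{cor: closed is inter of irreducible} applies directly, and I would derive everything from it. I would prove item (1) first. Let $H$ be a closed set with $H\neq A$. The inclusion $H\subseteq\bigcap\{F\in X(\mathbf{A})\colon H\subseteq F\}$ is trivial. The family on the right is nonempty, since $H$ is proper and the last sentence of Corollary \ref{cor: closed is inter of irreducible} provides an irreducible $F\supseteq H$; so the intersection is a genuine intersection and not the empty-family convention. For the reverse inclusion, take $a\notin H$. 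Corollary \ref{cor: closed is inter of irreducible} gives $F\in X(\mathbf{A})$ with $H\subseteq F$ and $a\notin F$, so $a$ is not in the intersection.

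Item (2) follows by applying (1) to $H=\mathrm{C}(Z)$ whenever $\mathrm{C}(Z)\neq A$. It then remains to note that, for every closed $F$, we have $Z\subseteq F$ if and only if $\mathrm{C}(Z)\subseteq F$, using (C1)–(C3). Hence the two indexing families coincide.

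For the base statement, let $\mathcal{B}$ be the closure system generated by $X(\mathbf{A})$. Since irreducible sets are closed and $\mathcal{C}_{\mathrm{C}}$ is a closure system, we get $\mathcal{B}\subseteq\mathcal{C}_{\mathrm{C}}$. Conversely, take any $H\in\mathcal{C}_{\mathrm{C}}$. If $H=A$, then $H\in\mathcal{B}$ because every closure system contains $A$ (it is the empty intersection). Otherwise (1) writes $H$ as an intersection of members of $X(\mathbf{A})$, so again $H\in\mathcal{B}$. This is exactly the content of Proposition 1.32 of \cite{F2016}, which may be cited instead.

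No step is a real obstacle. The only point needing care is the case distinction around $A$: $A$ itself is not irreducible, since irreducible sets are proper by definition. One might also worry about $\mathrm{C}(Z)=A$ arising for a nonempty $Z$, for instance when $0\in Z$. For this reason (2) is stated as a disjunction, and the nonemptiness of the index family in (1) must be checked, as done above.
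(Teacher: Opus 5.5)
Your proof is correct and follows the paper's route: the paper derives this proposition directly from Corollary~\ref{cor: closed is inter of irreducible} together with Proposition 1.32 of \cite{F2016}, and you have written out that argument. In your version, a clan separates each point outside a proper closed set, the index families $\{F\colon Z\subseteq F\}$ and $\{F\colon \mathrm{C}(Z)\subseteq F\}$ coincide, and every closed set is either $A$ or an intersection of clans. Your explicit checks that the index family in (1) is nonempty and that $A$ lies in the generated closure system as the empty intersection are details the paper leaves implicit.
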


\begin{lemma} \label{lem:charcat irreducible closed} Let $\left<\mathbf{A},\mathrm{C}\right>$
be a Boolean closure space satisfying the properties $\mathbf{Up}$
and $\mathbf{Jo}.$ Then 
\begin{enumerate}
\item A $\mathrm{C}$-closed subset $F$ is irreducible iff, for any $a,b\in A$, $a\vee b\in F$
implies $a\in F$ or $b\in F$. 
\item If $F$ is an irreducible $\mathrm{C}$-closed subset, then for each $a\in F$ there exists $U\in\Ul(\mathbf{A})$
such that $a\in U\subseteq F$. 
\end{enumerate}
\end{lemma}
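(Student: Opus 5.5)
For (1), I would prove the two directions separately, using Remark \ref{additivity} (the form of $\mathbf{Jo}$ with equality) for one direction and $\mathbf{Up}$ for the other.

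Suppose first that $F$ is irreducible and $a\vee b\in F$. Put $T_{1}=\mathrm{C}(F,a)$ and $T_{2}=\mathrm{C}(F,b)$. By Remark \ref{additivity}, and since $F$ is closed,
\[
T_{1}\cap T_{2}=\mathrm{C}(F,a\vee b)=\mathrm{C}(F)=F.
\]
Irreducibility then gives $F=T_{1}$ or $F=T_{2}$, that is, $a\in F$ or $b\in F$. The remark is stated for arbitrary $X\subseteq A$ and does not need finitarity, so it applies to $X=F$.

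Conversely, suppose $F$ is a proper closed set with the join property, and let $T_{1}\cap T_{2}=F$ with $T_{1},T_{2}$ closed. Assume towards a contradiction that $F\neq T_{1}$ and $F\neq T_{2}$. Choose $a\in T_{1}\setminus F$ and $b\in T_{2}\setminus F$. By $\mathbf{Up}$, $a\vee b\in T_{1}\cap T_{2}=F$. The join property then forces $a\in F$ or $b\in F$, which contradicts the choice of $a$ and $b$. Properness of $F$ is part of the definition of irreducibility, and I would simply carry it along as a standing hypothesis in both directions.

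For (2), fix an irreducible $F$ and $a\in F$, and consider the ideal $I=\{c\in A: c\notin F\}$. I must check that $I$ is an ideal.
\begin{itemize}
\item $I$ is a downset: if $c\notin F$ and $d\leq c$, then $d\notin F$, because $F$ is an upset by $\mathbf{Up}$.
\item $I$ is closed under joins: if $c,d\notin F$, then $c\vee d\notin F$ by part (1).
\item $I$ is nonempty: $F$ is proper, so there is some $e\notin F$, and then $0\notin F$ because $F$ is an upset. Hence $0\in I$.
\end{itemize}
We have $a\notin I$, so ${\uparrow}a\cap I=\emptyset$. By the Boolean prime ideal theorem there is an ultrafilter $U$ with $a\in U$ and $U\cap I=\emptyset$, that is, $a\in U\subseteq F$.

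The main point needing care is this ideal verification, which uses both $\mathbf{Up}$ and the join-primeness from (1). The rest is routine.
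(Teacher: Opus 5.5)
Your proof is correct and follows the paper's argument essentially step by step: $\mathbf{Jo}$ applied to $T_1=\mathrm{C}(F,a)$ and $T_2=\mathrm{C}(F,b)$ gives the forward direction of (1), $\mathbf{Up}$ applied to $a\vee b$ gives the converse, and the Ultrafilter Theorem applied to ${\uparrow}a$ and the ideal $A\setminus F$ gives (2). Carrying properness as an explicit standing hypothesis is a small improvement over the paper, which leaves it implicit even though $F=A$ is trivially join-prime but not irreducible.
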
 
\begin{proof}
(1) $\Leftarrow)$ Suppose first that $F$ satisfies the stated join-prime
condition. Suppose that $T_{1},T_{2}\in\mathcal{C}_{\mathrm{C}}$
and $T_{1}\cap T_{2}=F$ but $T_{1}\nsubseteq F$ and $T_{2}\nsubseteq F$.
Then there are $a,b\in A$ such that $a\in T_{1}-F$ and $b\in T_{2}-F$.
By property $\mathbf{Up}$, every closed set is an upset. So, $a\vee b\in T_{1}\cap T_{2}=F$.
But this implies that $a\in F$ or $b\in F$, which is a contradiction.
Thus, $T_{1}=F$ or $T_{2}=F$.

$\Rightarrow)$ Now suppose that $F$ is irreducible. Let $a,b\in A$
such that $a\vee b\in F$. Since $F$ is closed, by property $\mathbf{Jo}$
we have $F=\mathrm{C}\left(F,a\vee b\right)=\mathrm{C}(F,a)\cap\mathrm{C}(F,b)$.
So, $\mathrm{C}(F,a)=F$ or $\mathrm{C}(F,b)=F$. Then, $a\in F$
or $b\in F$. \\
 (2) Let $a\in F$. Then $a\notin F^{c}$. Notice that by (1) and
$\mathbf{Up}$, it easily follows that $F^{c}$ is an ideal of $\mathbf{A}$.
So $\mathrm{Fg}(a)\cap F^{c}=\emptyset$ and by the Ultrafilter Theorem,
there exists $U\in\mathrm{Ul}(\mathbf{A})$ such that $a\in U$ and $U\cap F^{c}=\emptyset$.
Thus, $a\in U\subseteq F$, as desired. 
\end{proof}

\begin{lemma}\label{lem: X(A) and ultrafilters} Let $\left<\mathbf{A},\mathrm{C}\right>$
be a Boolean closure space satisfying $\mathbf{Fi}$, $\mathbf{Up}$,
and $\mathbf{Jo}$. Then 
\begin{enumerate}
\item $\mathrm{Fi}(\mathbf{A})\subseteq\mathcal{C}_{\mathrm{C}}$ and $\Ul(\mathbf{A})\subseteq X(\mathbf{A})$. 
\item If $F\in\mathrm{Fi}(\mathbf{A})$ and $G\in X(\mathbf{A})$ such that $F\subseteq G$,
then there exists $P\in\Ul(\mathbf{A})$ such that $F\subseteq P\subseteq G$. 
\end{enumerate}
\end{lemma}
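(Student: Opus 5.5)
For part (1), I will first show that every filter is $\mathrm{C}$-closed, and then that every ultrafilter is irreducible. Let $F\in\mathrm{Fi}(\mathbf{A})$. By $\mathbf{Fi}$ we get $\mathrm{C}(F)\subseteq\mathrm{Fg}(F)=F$. By (C1) we have $F\subseteq\mathrm{C}(F)$, so $F=\mathrm{C}(F)$ and $F\in\mathcal{C}_{\mathrm{C}}$.

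Now let $U\in\Ul(\mathbf{A})$. It is a proper closed set, and it is prime: $a\vee b\in U$ implies $a\in U$ or $b\in U$. Lemma \ref{lem:charcat irreducible closed}(1) uses only $\mathbf{Up}$ and $\mathbf{Jo}$, both of which we assume. Hence $U$ is irreducible, i.e.\ $U\in X(\mathbf{A})$. The one point to check is that irreducibility in Definition \ref{def: irreducible C-closed} requires properness. This holds because an ultrafilter does not contain $0$.

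For part (2), I will adapt the argument of Lemma \ref{lem:charcat irreducible closed}(2), replacing the principal filter $\mathrm{Fg}(a)$ by the arbitrary filter $F$. Since $G$ is irreducible and closed, Lemma \ref{lem:charcat irreducible closed}(1) makes $G$ join-prime, and $\mathbf{Up}$ makes $G$ an upset. I then claim that $G^{c}$ is an ideal of $\mathbf{A}$:
\begin{itemize}
\item $G^{c}$ is a downset, since $G$ is an upset;
\item $G^{c}$ is closed under joins, since $a,b\notin G$ gives $a\vee b\notin G$ by join-primeness;
\item $G^{c}$ is nonempty, since $G$ is proper, so some element lies outside $G$. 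In fact $0\notin G$, because $G$ is an upset and $0\in G$ would force $G=A$.
\end{itemize}
Since $F\subseteq G$, we have $F\cap G^{c}=\emptyset$. The Boolean prime ideal (ultrafilter) theorem then gives $P\in\Ul(\mathbf{A})$ with $F\subseteq P$ and $P\cap G^{c}=\emptyset$, that is, $F\subseteq P\subseteq G$.

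The main obstacle I expect is only the bookkeeping of which hypotheses are needed where, namely $\mathbf{Fi}$ for filters being closed and $\mathbf{Up}$ and $\mathbf{Jo}$ for the join-prime characterization. I also need to confirm that $G^{c}$ is a genuine ideal, in particular that it contains $0$, so that the separation theorem applies. No new construction is required beyond these steps.
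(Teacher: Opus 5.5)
Your proposal is correct and follows the paper's proof essentially step for step: filters are closed by $\mathbf{Fi}$, ultrafilters are irreducible via the join-prime characterization of Lemma~\ref{lem:charcat irreducible closed}(1), and part (2) separates $F$ from the ideal $G^{c}$ using the ultrafilter theorem. One small improvement over the paper: your direct computation $\mathrm{C}(F)\subseteq\mathrm{Fg}(F)=F$ does not tacitly rely on finitarity, which the lemma does not assume, whereas the paper's finite-subset argument does.
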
 
\begin{proof}
(1) Suppose that $\left<\mathbf{A},\mathrm{C}\right>$ satisfies $\mathbf{Fi}$
and let $F\in\mathrm{Fi}(\mathbf{A})$. If $a_{1},\ldots,a_{n}\in F$ and $a\in\mathrm{C}(a_{1},\ldots,a_{n})\subseteq\mathrm{Fg}(a_{1},\ldots,a_{n})$,
then, as $F$ is a filter, $a\in F$. \\
 (2) Notice that $F\cap G^{c}=\emptyset$. Since $G^{c}$ is an ideal
of $\mathbf{A}$, by the Ultrafilter Theorem, there exists $P\in\Ul(\mathbf{A})$ such
that $F\subseteq P$ and $P\cap G^{c}=\emptyset$. So $F\subseteq P\subseteq G$,
as claimed. 
\end{proof}

We conclude this section showing that the concept of a clan for \(S\)-algebras \cite{VakarelovSequentALg} and that of an irreducible $\mathrm{C}$-closed subset  for sequent algebras (Definition \ref{def: irreducible C-closed}) are merely two sides of the same coin, as summarized below.

\begin{remark}\label{S-clans} 
Let $\langle\mathbf{A},\vdash\rangle$ be an $S$-algebra. Recall from \cite{VakarelovSequentALg} that a subset $F \subseteq A$ is called an \emph{$S$-filter} provided that if $X \vdash b$, then $b \in F$ for any finite subset $X \subseteq F$. Moreover, an $S$-filter $F$ is a \emph{clan} if $0 \notin F$ and satisfies that $a \vee b \in F$ implies $a \in F$ or $b \in F$ for all $a,b \in A$.

Let $\langle\mathbf{A},\mathrm{C}_{\vdash}\rangle$ be its associated algebra via Proposition~\ref{prop: charact closure}. Since $\mathrm{C}_{\vdash}$ is a finitary closure operator, a subset $F$ of $A$ is $\mathrm{C}_{\vdash}$-closed iff $\mathrm{C}_{\vdash}(X) \subseteq F$ for any finite subset $X$ of $F$. Note that the latter condition is equivalent to stating that $X \vdash a$ entails $a \in F$ for any finite subset $X$ of $F$. Consequently, the notion of a $\mathrm{C}_{\vdash}$-closed subset coincides with that of an $S$-filter. Additionally, since property $\mathbf{Up}$ ensures that every $\mathrm{C}_{\vdash}$-closed subset is an upset of $\mathbf{A}$, it follows that $F$ is proper iff $0 \notin F$. Thus, this fact and Lemma~\ref{lem:charcat irreducible closed} lead us to conclude that the notions of $S$-clan and irreducible $\mathrm{C}_{\vdash}$-closed subset coincide.

\end{remark}

So, in the spirit of Remark \ref{S-clans}, for the rest of the paper,
we will refer to irreducible $\mathrm{C}$-closed sets of a sequent algebra as clans.

\section{Sequent spaces}

In this section, we introduce sequent spaces and study their basic properties. These structures provide the topological setting for the duality with sequent algebras developed in the next section.
\medskip

From now on, a sequent algebra will be represented as a pair $\left<\mathbf{A},\vdash\right>$
or as a pair $\left<\mathbf{A},\mathrm{C}\right>$, where $\mathrm{C}=\mathrm{C}_{\vdash}.$

\begin{lemma} \label{lem: maps alpa and beta}Let $\left<\mathbf{A},\mathrm{C}\right>$
be a sequent algebra. Let $\left<X(\mathbf{A}),\subseteq\right>$ be the poset
of all irreducible subsets of $\left<A,\mathrm{C}\right>$. Consider
the map $\beta_{\mathbf{A}}:A\rightarrow\mathcal{P}(X(\mathbf{A}))$ given by 
\[
\beta_{\mathbf{A}}(a)=\left\{ F\in X(\mathbf{A}):a\in F\right\} ,
\]
for each $a\in A$. Then, 
\begin{enumerate}
\item $\beta_{\mathbf{A}}$ is a join-preserving order-embedding. 
\item The family $\beta_{\mathbf{A}}\left[A\right]=\left\{ \beta_{\mathbf{A}}(a):a\in A\right\} $
is a basis of closed sets for a $T_{0}$ topology $\tau_{\beta_{\mathbf{A}}\left[A\right]}$. 
\item $\beta_{\mathbf{A}}(\neg a)=\cl(\beta_{\mathbf{A}}(a)^{c})=-\beta_{\mathbf{A}}(a),$ 
\item $\beta_{\mathbf{A}}(a)$ is a regular closed subset of the topological space
$\left<X(\mathbf{A}),\tau_{\beta_{\mathbf{A}}\left[A\right]}\right>$. 
\item $\mathcal{B}_{\mathbf{A}}=\left\langle \beta_{\mathbf{A}}[A],\cup,\overline{\cap},-,\emptyset,X(\mathbf{A})\right\rangle $
is a Boolean subalgebra of $\mathbf{RC}(X(\mathbf{A}))$, where $\overline{\cap}$
is defined by 
\[
\beta_{\mathbf{A}}(a)\,\overline{\cap}\,\beta_{\mathbf{A}}(b)=\cl(\In(\beta_{\mathbf{A}}(a)\cap\beta_{\mathbf{A}}(b))),
\]
for $a,b\in A$. 
\item \label{enu:closed}For any $X\cup\left\{ b\right\} \subseteq A$,
\[
b\in\mathrm{C}(X)\text{ iff }\bigcap\left\{ \beta_{\mathbf{A}}(a):a\in X\right\} \subseteq\beta_{\mathbf{A}}(b).
\]
\end{enumerate}
\end{lemma}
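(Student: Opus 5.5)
I will prove the six items roughly in order, using throughout that every clan $F$ is a proper upset, join-prime (Lemma \ref{lem:charcat irreducible closed}(1)), and contains an ultrafilter around each of its elements (Lemma \ref{lem:charcat irreducible closed}(2)). I will also use that ultrafilters are clans (Lemma \ref{lem: X(A) and ultrafilters}(1)).

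\textbf{(1) and (2).} Monotonicity of $\beta_{\mathbf{A}}$ follows from $\mathbf{Up}$. Join preservation holds because clans are upsets and join-prime, which gives $\beta_{\mathbf{A}}(a\vee b)=\beta_{\mathbf{A}}(a)\cup\beta_{\mathbf{A}}(b)$. For order-reflection, suppose $a\not\le b$. Then ${\uparrow}a\cap{\downarrow}b=\emptyset$, and ${\uparrow}a=\mathrm{C}(a)$ is closed (Remark \ref{rem: order C}). Lemma \ref{lem: separation} yields a clan $F$ with $a\in F$ and $b\notin F$. For (2), $\beta_{\mathbf{A}}[A]$ is closed under finite unions and contains $\beta_{\mathbf{A}}(0)=\emptyset$, since no clan contains $0$. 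Hence it is a basis of closed sets. The space is $T_0$: distinct clans $F\neq G$ differ by some element $a$, and $\beta_{\mathbf{A}}(a)$ is a basic closed set containing one of them but not the other.

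\textbf{(3) and (4).} This is the main step. First, $\beta_{\mathbf{A}}(a)\cup\beta_{\mathbf{A}}(\neg a)=\beta_{\mathbf{A}}(1)=X(\mathbf{A})$, so $\beta_{\mathbf{A}}(a)^c\subseteq\beta_{\mathbf{A}}(\neg a)$. The latter set is closed, hence $\cl(\beta_{\mathbf{A}}(a)^c)\subseteq\beta_{\mathbf{A}}(\neg a)$. For the reverse inclusion, let $F\in\beta_{\mathbf{A}}(\neg a)$ and let $\beta_{\mathbf{A}}(c)$ be a basic closed set containing $\beta_{\mathbf{A}}(a)^c$; we must show $F\in\beta_{\mathbf{A}}(c)$. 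By Lemma \ref{lem:charcat irreducible closed}(2), pick an ultrafilter $U$ with $\neg a\in U\subseteq F$. Then $a\notin U$, so $U\in\beta_{\mathbf{A}}(a)^c\subseteq\beta_{\mathbf{A}}(c)$, i.e. $c\in U\subseteq F$. Thus $F\in\beta_{\mathbf{A}}(c)$, which proves (3).

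For (4), note that $\cl(\beta_{\mathbf{A}}(\neg a)^c)=\beta_{\mathbf{A}}(\neg\neg a)=\beta_{\mathbf{A}}(a)$ by (3). The interior of $\beta_{\mathbf{A}}(a)$ contains the open set $\beta_{\mathbf{A}}(\neg a)^c$, so $\beta_{\mathbf{A}}(a)=\cl(\beta_{\mathbf{A}}(\neg a)^c)\subseteq\cl\In\beta_{\mathbf{A}}(a)\subseteq\beta_{\mathbf{A}}(a)$. Hence $\beta_{\mathbf{A}}(a)$ is regular closed.

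\textbf{(5).} The image $\beta_{\mathbf{A}}[A]$ is closed under $\cup$ and under the regular-closed complement $-$ (by (3)). It contains $\emptyset$ and $X(\mathbf{A})$. The meet is then expressible as $-(-\beta_{\mathbf{A}}(a)\cup-\beta_{\mathbf{A}}(b))=\beta_{\mathbf{A}}(a\wedge b)$, and in $\mathbf{RC}(X(\mathbf{A}))$ this equals $\beta_{\mathbf{A}}(a)\,\overline{\cap}\,\beta_{\mathbf{A}}(b)$. So $\mathcal{B}_{\mathbf{A}}$ is a Boolean subalgebra, and $\beta_{\mathbf{A}}$ is a Boolean isomorphism onto it.

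\textbf{(6).} If $b\in\mathrm{C}(X)$ and $F$ is a clan containing $X$, then $\mathrm{C}(X)\subseteq F$ because $F$ is closed, so $b\in F$. Conversely, if $b\notin\mathrm{C}(X)$, then $\mathrm{C}(X)$ is a proper closed set. Corollary \ref{cor: closed is inter of irreducible} gives a clan $F\supseteq\mathrm{C}(X)\supseteq X$ with $b\notin F$, so $\bigcap\{\beta_{\mathbf{A}}(a):a\in X\}\not\subseteq\beta_{\mathbf{A}}(b)$. The case $X=\emptyset$ reads as the intersection being $X(\mathbf{A})$, which is consistent with $\mathbf{One}$.
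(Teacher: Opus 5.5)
Your proof is correct and follows the paper's outline for (1), (2), (5) and (6). The only variation there is in order-reflection: you apply the separation lemma to $\mathrm{C}(a)={\uparrow}a$ and the ideal ${\downarrow}b$, where the paper takes an ultrafilter directly. For (3) and (4) the paper only cites Lemma 13 of Vakarelov, whereas you give a self-contained argument via Lemma~\ref{lem:charcat irreducible closed}(2), and you also derive $\beta_{\mathbf{A}}(a)\,\overline{\cap}\,\beta_{\mathbf{A}}(b)=\beta_{\mathbf{A}}(a\wedge b)$ from De Morgan in $\mathbf{RC}(X(\mathbf{A}))$ instead of asserting it; both additions are sound.
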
 
\begin{proof}
(1) By Lemma \ref{lem:charcat irreducible closed} (1) we have $\beta_{\mathbf{A}}(a\vee b)=\beta_{\mathbf{A}}(a)\cup\beta_{\mathbf{A}}(b)$,
for any $a,b\in A$. It is clear that $\beta_{\mathbf{A}}(0)=\emptyset$, because
every irreducible set is proper. Let $a,b\in A$. If $a\leq b$ and
$a\in F\in X(\mathbf{A})$, then $b\in F$, because every closed set is an
upset. This proves that $\beta_{\mathbf{A}}(a)\subseteq\beta_{\mathbf{A}}(b)$. Conversely,
if $a\nleq b$, by the Ultrafilter Theorem there exists an ultrafilter $U$ of $\mathbf{A}$ such that
$a\in U$ and $b\notin U$. By Lemma \ref{lem: X(A) and ultrafilters} (1),
every ultrafilter is irreducible; hence $U\in\beta_{\mathbf{A}}(a)$ and $U\notin\beta_{\mathbf{A}}(b)$,
i.e., $\beta_{\mathbf{A}}(a)\nsubseteq\beta_{\mathbf{A}}(b)$.

(2) Observe that $\bigcap \{\beta_{\mathbf{A}}(a)\colon a\in A\}=\emptyset$ and by (1) $\beta_{\mathbf{A}}(a\vee b)=\beta_{\mathbf{A}}(a)\cup\beta_{\mathbf{A}}(b)$, thus $\beta[A]$ is a basis of closed sets for a topology $\tau_{\beta[A]}$. It is no hard to see that $\tau_{\beta[A]}$ is $T_0$. The proofs of (3) and (4) are similar to the proofs
given in Lemma 13 of \cite{VakarelovSequentALg}.

The proof of (5) follows from (3), (4) and the fact that $\beta_{\mathbf{A}}(a)\overline{\cap}\beta_{\mathbf{A}}(b)=\beta_{\mathbf{A}}(a\wedge b)$,
for all $a,b\in A$.

(6) Let $b\in\mathrm{C}(X)$. If $F\in\bigcap\left\{ \beta_{\mathbf{A}}(a):a\in X\right\} $,
then $a\in F$, for any $a\in X$. So, $X\subseteq F$ and $\mathrm{C}(X)\subseteq F$.
Thus, $b\in F$. We prove the converse by contraprosition. Suppose that $b\notin\mathrm{C}(X)$, i.e., $\mathrm{C}(X)\cap\downarrow b=\emptyset$.
By Lemma \ref{lem: separation} there exists $F\in X(\mathbf{A})$ such that
$\mathrm{C}(X)\subseteq F$ and $b\notin F$. Then, $F\in\bigcap\left\{ \beta_{\mathbf{A}}(a):a\in X\right\} $
and $F\notin\beta(b)$. Thus, $\bigcap\left\{ \beta_{\mathbf{A}}(a):a\in X\right\} \nsubseteq\beta_{\mathbf{A}}(b)$. 
\end{proof}

In what follows, a topological space $\left<X,\tau\right>$ with a
distinguished basis $\mathcal{B}$ of closed subsets of $\left<X,\tau\right>$
will be denoted by $\left<X,\tau_{\mathcal{B}}\right>$ or simply
by $\left<X,\mathcal{B}\right>$.

Recall that the relation $\vdash$ on $\mathbf{RC}(X)$ is defined
as: 
\[
U_{1},\ldots,U_{n}\vdash U\text{ iff }U_{1}\cap\ldots\cap U_{n}\subseteq U,
\]
for any $U_{1},\ldots,U_{n},U\in\mathrm{RC}(X)$. Moreover, the closure
operator 
\[\mathrm{C}_{\vdash}:\mathcal{P}(\mathrm{RC}(X))\rightarrow\mathcal{P}(\mathrm{RC}(X))\]
is defined as 
\[
U\in\mathrm{C}_{\vdash}(Y)\text{ iff }\bigcap\left\{ V:V\in Y\right\} \subseteq U
\]

The restriction of the relation $\vdash$ to $\mathcal{B}$ is 
\[
\vdash_{\mathcal{B}}{=}\vdash\cap\left(\mathcal{P}(\mathcal{B})\times\mathcal{B}\right).
\]

We now define the dual spaces of sequent algebras. The definition
is an adaptation of the notion of mereocompact space given by Robert
Goldblatt and Matt Grice in \cite{Goldblatt Mereocompactness}.

\begin{definition} \label{def:A-compingent-topological space}A sequent
space is a $T_{0}$ topological space $\left<X,\tau_{\mathcal{B}}\right>$
such that 
\begin{enumerate}
\item $\mathcal{B}$ is a subalgebra of the Boolean algebra $\mathbf{RC}(X)$
of regular closed subsets of $\left<X,\tau_{\mathcal{B}}\right>$. 
\item \label{condition compacity}For each irreducible closed subset $F$
of $\mathcal{B}$ and $\Delta\subseteq\mathcal{B}$, if $\bigcap F\subseteq\bigcup\Delta$,
then $F\cap\Delta\neq\emptyset$. 
\end{enumerate}
\end{definition}

It is readily seen that $\mathrm{C}_{\vdash_{\mathcal{B}}}$ is a
finitary closure operator satisfying $\mathbf{Fi}$, $\mathbf{One}$,
$\mathbf{Up}$, and $\mathbf{Jo}$. Therefore, if
$\left<X,\tau_{\mathcal{B}}\right>$ is a sequent space, we get that
$\langle\mathcal{B},\mathrm{C}_{\vdash_{\mathcal{B}}}\rangle$ is
a sequent algebra. 

\begin{lemma}\label{lem: Algebras to spaces}
Let $\left<\mathbf{A},\mathrm{C}\right>$
be a sequent algebra. Then, the topological space
$\left<X(\mathbf{A}),\tau_{\beta_{\mathbf{A}}\left[A\right]}\right>$
is a sequent space.
\end{lemma}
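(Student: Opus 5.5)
The plan is to verify the two conditions of Definition~\ref{def:A-compingent-topological space} for $\mathcal{B}=\beta_{\mathbf{A}}[A]$; the $T_0$ property is already given by Lemma~\ref{lem: maps alpa and beta}(2). Condition (1) is essentially Lemma~\ref{lem: maps alpa and beta}(5). By that item, $\mathcal{B}_{\mathbf{A}}$ is a Boolean subalgebra of $\mathbf{RC}(X(\mathbf{A}))$: it is closed under $\cup$, under the regular meet $\overline{\cap}$ and under $-$, and it contains $\emptyset$ and $X(\mathbf{A})$. I would also record that $\beta_{\mathbf{A}}$ is a Boolean isomorphism from $\mathbf{A}$ onto $\mathcal{B}$. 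It is injective by (1), and it preserves $\vee$ by (1), $\neg$ by (3), and $\wedge$ via $\beta_{\mathbf{A}}(a)\overline{\cap}\beta_{\mathbf{A}}(b)=\beta_{\mathbf{A}}(a\wedge b)$.

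Next I would transport the sequent structure. By Lemma~\ref{lem: maps alpa and beta}(6), for $X\cup\{b\}\subseteq A$ we have $b\in \mathrm{C}(X)$ iff $\bigcap\beta_{\mathbf{A}}[X]\subseteq \beta_{\mathbf{A}}(b)$. This says exactly that $\beta_{\mathbf{A}}$ is an isomorphism of sequent algebras $\langle \mathbf{A},\mathrm{C}\rangle\cong\langle\mathcal{B},\mathrm{C}_{\vdash_{\mathcal{B}}}\rangle$. Consequently the irreducible $\mathrm{C}_{\vdash_{\mathcal{B}}}$-closed subsets of $\mathcal{B}$ are precisely the sets $\beta_{\mathbf{A}}[G]$ with $G\in X(\mathbf{A})$.

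For condition (2), fix an irreducible closed $\mathcal{F}=\beta_{\mathbf{A}}[G]$ of $\mathcal{B}$ and a family $\Delta=\beta_{\mathbf{A}}[D]$ with $D\subseteq A$, and assume $\bigcap\mathcal{F}\subseteq\bigcup\Delta$. The key observation is that $G$ itself is a point of $X(\mathbf{A})$, and $G\in\beta_{\mathbf{A}}(a)$ for every $a\in G$. Hence $G\in\bigcap\mathcal{F}\subseteq\bigcup\Delta$, so $G\in\beta_{\mathbf{A}}(d)$ for some $d\in D$. That means $d\in G$, so $\beta_{\mathbf{A}}(d)\in\mathcal{F}\cap\Delta$, which is what condition (2) requires.

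The main obstacle is the identification, in the second paragraph, of the irreducible closed sets of $\langle\mathcal{B},\mathrm{C}_{\vdash_{\mathcal{B}}}\rangle$ with the images of clans. This has to be checked carefully through Lemma~\ref{lem: maps alpa and beta}(6), which requires the reading $\bigcap\emptyset=X(\mathbf{A})$ when $X=\emptyset$, together with the fact that irreducibility is preserved under an isomorphism of closure spaces. Once that identification is in place, the compactness condition (2) reduces to the one-line argument that evaluates at the point $G$.
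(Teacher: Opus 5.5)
Your proposal is correct and is essentially the paper's argument. The paper also pulls the irreducible closed $\Gamma\subseteq\mathcal{B}_{\mathbf{A}}$ back to $F_A=\{a\in A:\beta_{\mathbf{A}}(a)\in\Gamma\}$, shows it is a clan, and then uses that the point $F_A$ lies in $\bigcap\Gamma\subseteq\bigcup\Delta$, exactly as you do with $G$; the only difference is that the paper checks closedness of $F_A$ via the forward direction of Lemma~\ref{lem: maps alpa and beta}(6) and join-primeness via that of $\Gamma$, whereas you obtain $\Gamma=\beta_{\mathbf{A}}[G]$ at once by transporting the closure system along $\beta_{\mathbf{A}}$ using both directions of (6).
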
 
\begin{proof}
    By Lemma \ref{lem: maps alpa and beta} (2) and (5),
$\left<X(\mathbf{A}),\tau_{\beta_{\mathbf{A}}\left[A\right]}\right>$ is a
$T_{0}$ space and that $\mathcal{B}_{\mathbf{A}}$ is a Boolean subalgebra
of $\mathbf{RC}(X(\mathbf{A}))$. Let $\Gamma$ be an irreducible closed subset of $\mathcal{B}_\mathbf{A}$ and
let $\Delta\subseteq\mathcal{B}_\mathbf{A}$ such that $\bigcap\Gamma\subseteq\bigcup\Delta$.
Let $F_{A}=\left\{ a\in A:\beta_\mathbf{A}(a)\in\Gamma\right\} $. We prove that
$F_{A}$ is an irreducible $\mathrm{C}$-closed subset of $A$. Let $a_{1},\ldots,a_{n}\in F_{A}$
and $b\in A$ such that $a_{1},\ldots,a_{n}\vdash b$. Then, by  Lemma \ref{lem: maps alpa and beta} (6) we have
that $\beta_{\mathbf{A}}(a_{1})\cap\ldots\cap\beta_{\mathbf{A}}(a_{n})\subseteq\beta_{\mathbf{A}}(b)$. Since
$\beta_{\mathbf{A}}(a_{1}),\ldots,\beta_{\mathbf{A}}(a_{n})\in\Gamma$ and $\Gamma$ is a closed
subset of $\left<\mathcal{B}_{\mathbf{A}},\vdash_{\mathcal{B}_{\mathbf{A}}}\right>$, we get
$\beta_{\mathbf{A}}(b)\in\Gamma$, i.e., $b\in F_{A}.$ Thus, $F_{A}$ is closed.
It is easy to see that if $a\vee b\in F_{A}$, then $a\in F_{A}$
or $b\in F_{A}.$ So, $F_{A}\in X(\mathbf{A})$. As $F_{A}\in\bigcap\Gamma\subseteq\bigcup\Delta$,
then there exists $\beta_{\mathbf{A}}(c)\in\Delta$ such that $F_{A}\in\beta_{\mathbf{A}}(c).$
So, $c\in F_{A}$ and by the definition of $F_{A}$,
we have $\beta_{\mathbf{A}}(c)\in\Gamma$. Thus, $\beta_{\mathbf{A}}(c)\in\Gamma\cap\Delta$ so $\Gamma\cap\Delta\neq \emptyset$. 
\end{proof}

\begin{lemma} Let $\left<\mathbf{A},\mathrm{C}\right>$ be a sequent algebra.
Then the sequent space $\left<X(\mathbf{A}),\tau_{\beta_{\mathbf{A}}\left[A\right]}\right>$
is compact. \end{lemma}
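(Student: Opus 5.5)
Since $\beta_{\mathbf{A}}[A]$ is a basis of closed sets for $\tau_{\beta_{\mathbf{A}}[A]}$, compactness reduces to the finite intersection property for basic closed sets. By Lemma \ref{lem: maps alpa and beta}(1) the basis is closed under finite unions, so every closed set is an intersection of basic closed sets. Hence it suffices to show the following: if $Y\subseteq A$ and $\bigcap\{\beta_{\mathbf{A}}(a):a\in Y\}=\emptyset$, then $\bigcap\{\beta_{\mathbf{A}}(a):a\in Y_0\}=\emptyset$ for some finite $Y_0\subseteq Y$.

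I would argue contrapositively. Suppose every finite subfamily has nonempty intersection. The key step is to show that $0\notin\mathrm{C}(Y)$. Since $\mathrm{C}$ is finitary, if $0\in\mathrm{C}(Y)$ then $0\in\mathrm{C}(Y_0)$ for some finite $Y_0\subseteq Y$. By Lemma \ref{lem: maps alpa and beta}(6) this gives
\[
\bigcap\{\beta_{\mathbf{A}}(a):a\in Y_0\}\subseteq\beta_{\mathbf{A}}(0)=\emptyset,
\]
which contradicts the assumption. So $0\notin \mathrm{C}(Y)$, and therefore $\mathrm{C}(Y)\cap\{0\}=\emptyset$, where $\{0\}={\downarrow}0$ is an ideal of $\mathbf{A}$.

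Now apply Lemma \ref{lem: separation} with $H=\mathrm{C}(Y)$ and $I=\{0\}$. This yields a clan $F\in X(\mathbf{A})$ with $Y\subseteq\mathrm{C}(Y)\subseteq F$. Then $F\in\beta_{\mathbf{A}}(a)$ for every $a\in Y$, so the full intersection is nonempty, which proves compactness.

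Alternatively, the conclusion follows directly from the last paragraph together with Lemma \ref{lem: maps alpa and beta}(6) applied to $b=0$: the whole intersection is empty iff $0\in\mathrm{C}(Y)$ iff $0\in\mathrm{C}(Y_0)$ for a finite $Y_0\subseteq Y$, which in turn holds iff the corresponding finite intersection is empty.

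The only point needing care is the reduction from arbitrary closed covers to basic closed sets. This is standard once the basis of closed sets is known to be closed under finite unions, and that is given by Lemma \ref{lem: maps alpa and beta}(1). I therefore expect no real obstacle; the substance is simply finitarity of $\mathrm{C}$ combined with the separation lemma.
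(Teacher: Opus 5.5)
Your proof is correct, but it takes a different route from the paper's.

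\textbf{The paper's route.} The paper starts from a cover of $X(\mathbf{A})$ by basic open sets $\beta_{\mathbf{A}}(a)^{c}$, $a\in B$, that has no finite subcover. It extends $\{\beta_{\mathbf{A}}(a):a\in B\}$ to an ultrafilter $P$ of the powerset algebra $\mathcal{P}(X(\mathbf{A}))$. It then builds the witnessing clan explicitly as $F_{P}=\{a\in A:\beta_{\mathbf{A}}(a)\in P\}$:
\begin{itemize}
\item closedness comes from the easy direction of Lemma \ref{lem: maps alpa and beta}(6), applied to finite sets;
\item join-primeness comes from $\beta_{\mathbf{A}}(a\vee b)=\beta_{\mathbf{A}}(a)\cup\beta_{\mathbf{A}}(b)$ and the primeness of $P$;
\item $0\notin F_{P}$ because $\beta_{\mathbf{A}}(0)=\emptyset\notin P$.
\end{itemize}
This is the classical Stone-space argument, and it parallels the construction of $F_{A}$ in Lemma \ref{lem: Algebras to spaces}. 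It never uses the separation lemma, only the Ultrafilter Theorem for $\mathcal{P}(X(\mathbf{A}))$.

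\textbf{Your route.} You first use finitarity to get $0\notin\mathrm{C}(Y)$. You then obtain the clan by separating the closed set $\mathrm{C}(Y)$ from the ideal $\{0\}$ via Lemma \ref{lem: separation}.

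\textbf{Comparison.} Your route is shorter and more conceptual. As your last paragraph shows, once the hard direction of Lemma \ref{lem: maps alpa and beta}(6) is available, compactness is exactly the finitarity of $\mathrm{C}$, read off at $b=0$ through the representation $\beta_{\mathbf{A}}$. The cost is that it relies on the Zorn's-lemma argument behind the separation lemma, which is also what proves the hard direction of (6). The paper's proof needs only the easy direction of (6) and is independent of that lemma.
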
 
\begin{proof}
Let $B\subseteq A$ be a non-empty subset such that $X(\mathbf{A})\subseteq\bigcup\left\{ \beta_\mathbf{A}(a)^{c}:a\in B\right\} $.
Suppose that for any finite subset $B_{0}$ of $B$ we have $X(\mathbf{A})\nsubseteq\bigcup\left\{ \beta_\mathbf{A}(a)^{c}:a\in B_{0}\right\} $.
Then the filter in the Boolean algebra $\mathcal{P}(X(\mathbf{A}))$ generated
by the family $\left\{ \beta_\mathbf{A}(a):a\in B\right\} $ is proper. Then
there exists an ultrafilter $P$ of $\mathcal{P}(X(\mathbf{A}))$ such that
$\left\{ \beta_\mathbf{A}(a):a\in B\right\} \subseteq P$. Consider the set $F_{P}=\left\{ a\in A:\beta_\mathbf{A}(a)\in P\right\} .$
We prove that $F_{P}\in X(A)$. Let $X=\left\{ a_{1},\ldots,a_{n}\right\} $
be a finite subset of $F_{P}$. Let $b\in\mathrm{C}(X)$. By Lemma
\ref{lem: maps alpa and beta}, $\beta_\mathbf{A}(a_{1})\cap\ldots\cap\beta_\mathbf{A}(a_{n})\subseteq\beta_\mathbf{A}(b)$.
As $\beta_\mathbf{A}(a_{1})\cap\ldots\cap\beta_\mathbf{A}(a_{n})\in P$, we have $\beta_\mathbf{A}(b)\in P$.
So, $b\in F_{P}$. Then $F_{P}$ is closed. Let $a\vee b\in F_{P}.$
So $\beta_\mathbf{A}(a\vee b)=\beta_\mathbf{A}(a)\cup\beta_\mathbf{A}(b)\in P$. Then, $\beta_\mathbf{A}(a)\in P$
or $\beta_\mathbf{A}(b)\in P$, i.e., $a\in F_{P}$ or $b\in F_{P}$. It is clear
that $a\in F_{p}$ for any $\beta(a)\in P$. Thus, $B\subseteq F_{p}$,
i.e., $F_{p}\notin\bigcup\left\{ \beta_\mathbf{A}(a)^{c}:a\in B\right\} $, which
is a contradiction. Thus, $\left<X(A),\tau_{\beta_{\mathbf{A}}[A]}\right>$ is
compact. 
\end{proof}

\section{Duality theory}

In this section, we introduce the categories of sequent algebras and sequent spaces and establish a duality between them. We first characterize the appropriate morphisms and then construct canonical representations for both algebras and spaces. These representations yield a dual equivalence between the corresponding categories, which restricts to a dual equivalence between the categories of sequent algebras with sequent homomorphisms and sequent spaces with sequent morphisms.

\subsection{Categories of sequent algebras and sequent spaces}

Here and below, a join-homomorphism is understood to preserve
$0$ and binary joins.

\begin{lemma}\label{lem:chacact homo contact}Let $\left<\mathbf{A},\mathrm{C}\right>$
and $\left<\mathbf{B},\mathrm{D}\right>$ be sequent algebras. Let $g:A\rightarrow B$
be a monotonic map. Then the following conditions are equivalent: 
\begin{enumerate}
\item $g$ is a join-homomorphism such that $g(1)=1$, and $g[\mathrm{C}(X)]\subseteq\mathrm{D}(g[X]),$
for each $X\subseteq A$. 
\item $g^{-1}[F]\in X(\mathbf{A})$, for each $F\in X(\mathbf{B})$. 
\end{enumerate}
\end{lemma}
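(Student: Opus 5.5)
We prove the two implications separately. The direction (1)$\Rightarrow$(2) is a direct verification using Lemma \ref{lem:charcat irreducible closed}(1). For (2)$\Rightarrow$(1) we argue by contradiction, separating points with clans of $\mathbf{B}$: either ultrafilters of $\mathbf{B}$, which are clans by Lemma \ref{lem: X(A) and ultrafilters}(1), or the clans given by Corollary \ref{cor: closed is inter of irreducible}.

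\emph{(1)$\Rightarrow$(2).} Let $F\in X(\mathbf{B})$ and put $G=g^{-1}[F]$.
First, $G$ is $\mathrm{C}$-closed. Since $\mathrm{C}$ is finitary, it suffices to take $a_1,\ldots,a_n\in G$ and $b\in\mathrm{C}(a_1,\ldots,a_n)$. Then $g(b)\in\mathrm{D}(g(a_1),\ldots,g(a_n))\subseteq F$, because $F$ is $\mathrm{D}$-closed, so $b\in G$.
Second, $G$ is proper, since $g(0)=0\notin F$.
Third, $G$ is join-prime: if $a\vee b\in G$, then $g(a)\vee g(b)=g(a\vee b)\in F$. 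As $F$ is join-prime, $g(a)\in F$ or $g(b)\in F$.
By Lemma \ref{lem:charcat irreducible closed}(1), $G$ is a clan.

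\emph{(2)$\Rightarrow$(1).} We establish each part of (1) in turn.

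\begin{itemize}
\item \emph{$g(0)=0$.} Suppose $g(0)\neq 0$. Take an ultrafilter $U$ of $\mathbf{B}$ with $g(0)\in U$. Then $U\in X(\mathbf{B})$, yet $0\in g^{-1}[U]$, contradicting properness of the clan $g^{-1}[U]$.

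\item \emph{$g(1)=1$.} Every $\mathrm{C}$-closed set $G$ satisfies $\mathrm{C}(\emptyset)\subseteq\mathrm{C}(G)=G$, so $1\in G$ by $\mathbf{One}$. If $g(1)\neq1$, choose an ultrafilter $U$ with $g(1)\notin U$. Then $1\notin g^{-1}[U]$, a contradiction.

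\item \emph{Binary joins.} Monotonicity gives $g(a)\vee g(b)\leq g(a\vee b)$. For the reverse inequality, suppose $g(a\vee b)\nleq g(a)\vee g(b)$. Take an ultrafilter $U$ of $\mathbf{B}$ with $g(a\vee b)\in U$ and $g(a)\vee g(b)\notin U$. Then $a\vee b\in g^{-1}[U]\in X(\mathbf{A})$. By join-primeness, $a$ or $b$ lies in $g^{-1}[U]$, so $g(a)\in U$ or $g(b)\in U$. Since $U$ is an upset, this gives $g(a)\vee g(b)\in U$, a contradiction.

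\item \emph{$g[\mathrm{C}(X)]\subseteq\mathrm{D}(g[X])$.} Let $b\in\mathrm{C}(X)$ and suppose $g(b)\notin\mathrm{D}(g[X])$. Then $\mathrm{D}(g[X])$ is a proper $\mathrm{D}$-closed set. By Corollary \ref{cor: closed is inter of irreducible}, there is $G\in X(\mathbf{B})$ with $\mathrm{D}(g[X])\subseteq G$ and $g(b)\notin G$. Now $X\subseteq g^{-1}[G]$, which is $\mathrm{C}$-closed by (2). Hence $b\in\mathrm{C}(X)\subseteq g^{-1}[G]$, i.e.\ $g(b)\in G$, a contradiction.
\end{itemize}

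The main obstacle is the join-preservation step. Hypothesis (2) concerns only clans, so we must recover an algebraic identity from them. The key is that ultrafilters of $\mathbf{B}$ are clans (Lemma \ref{lem: X(A) and ultrafilters}(1)) and separate points by the Ultrafilter Theorem. This lets join-primeness of $g^{-1}[U]$ be transported back to $\mathbf{B}$.
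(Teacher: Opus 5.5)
Your proof is correct and follows the paper's argument essentially step by step. For (1)$\Rightarrow$(2) you use the same direct verification, and for (2)$\Rightarrow$(1) you use the same ultrafilter separation for $0$ and binary joins and the same appeal to Corollary \ref{cor: closed is inter of irreducible} for $g[\mathrm{C}(X)]\subseteq\mathrm{D}(g[X])$. The only variation is in the step $g(1)=1$: you choose an ultrafilter omitting $g(1)$ and use $1\in g^{-1}[U]$ directly, whereas the paper chooses one containing $\neg g(1)$ and uses monotonicity to get $g^{-1}[P]=\emptyset$; both arguments rest on $\mathbf{One}$.
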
 
\begin{proof}
$(1)\Rightarrow(2):$ Let $F\in X(\mathbf{B})$. We start by showing that $g^{-1}[F]$ is a $\mathrm{C}$-closed subset of $A$. To this end, we will prove that $\mathrm{C}(X)\subseteq g^{-1}[F]$, for every $X\subseteq g^{-1}[F]$. Let $a\in\mathrm{C}(X)$ and $X\subseteq g^{-1}[F]$. Then, $g(a)\in g[\mathrm{C}(X)]$.
Since $g[X]\subseteq F$ and $F$ is $\mathrm{D}$-closed, by (1) we have $g[C(X)]\subseteq \mathrm{D}(g[X])\subseteq F$.
This yields $a\in g^{-1}[F]$, as desired. 

Since $0=g(0)\notin F$, then it follows $0\notin g^{-1}[F]$, so $g^{-1}[F]$ is proper. Finally, to prove that $g^{-1}[F]$ is irreducible, let $a\vee b\in g^{-1}[F]$. Then $g(a\vee b)=g(a)\vee g(b)\in F$. Since $F$ is irreducible, Lemma \ref{lem:charcat irreducible closed} (1) yields $g(a)\in F$ or $g(b)\in F$. Hence, $a\in g^{-1}[F]$ or $b\in g^{-1}[F]$. Therefore, by the same result as above, $g^{-1}[F]\in X(\mathbf{A})$.

$(2)\Rightarrow(1):$ As $g$ is monotone, we have $g(a),g(b)\leq g(a\vee b)$ for all $a,b\in A$. Suppose that there exist $a,b\in A$ such that $g(a\vee b)\nleq g(a)\vee g(b)$. Then, by the Ultrafilter Theorem, there exists an ultrafilter $P$ of $\mathbf{B}$ such that $g(a\vee b)\in P$ and $g(a)\vee g(b)\notin P$. Consequently, $a\vee b\in g^{-1}[P]$, whereas $a,b\notin g^{-1}[P]$. This contradicts the irreducibility of $g^{-1}[P]$, which follows from Lemma \ref{lem: X(A) and ultrafilters} (1) and (2). 

To show that \(g(0)=0\), suppose that \(g(0)\nleq 0\). By the Ultrafilter Theorem, there exists an ultrafilter \(Q\) of \(\mathbf{B}\) such that \(g(0)\in Q\) and \(0\notin Q\). Thus, \(0\in g^{-1}[Q]\). By Lemma \ref{lem: X(A) and ultrafilters} (1), every ultrafilter of \(\mathbf{B}\) is an irreducible \(\mathrm{C}\)-closed subset of \(B\). Hence, by (2), \(g^{-1}[Q]\) is irreducible and, in particular, proper. This yields a contradiction. Therefore, \(g(0)=0\), and consequently, \(g\) is a join-homomorphism.

Now we show that \(g(1)=1\). Suppose, towards a contradiction, that \(g(1)<1\). Then \(0<\neg g(1)\), so \(\neg g(1)\nleq 0\). By the Ultrafilter Theorem, there exists an ultrafilter \(P\) of \(\mathbf{B}\) containing \(\neg g(1)\). Since \(g\) is monotone, \(g(a)\leq g(1)\) for every \(a\in A\). Hence, \(g(a)\notin P\) for every \(a\in A\), and thus \(g^{-1}[P]=\emptyset\). This contradicts (2), since \(\emptyset\) is not \(\mathrm{C}\)-closed: indeed, \(\mathbf{One}\) implies that \(1\in\mathrm{C}_{A}(\emptyset)\). Therefore, \(g(1)=1\).

Finally, let $X\subseteq A$. We show that
$g[\mathrm{C}(X)]\subseteq\mathrm{D}(g[X])$ by contradiction.
Suppose that there exists $a\in\mathrm{C}(X)$ such that
$g(a)\notin\mathrm{D}(g[X])$. Since $\mathrm{D}(g[X])$ is
$\mathrm{D}$-closed, Corollary \ref{cor: closed is inter of irreducible}
yields an $F\in X(\mathbf{B})$ such that
$\mathrm{D}(g[X])\subseteq F$ and $g(a)\notin F$.
Since $\mathrm{D}$ is a closure operator, we have $g[X]\subseteq F$,
and hence $X\subseteq g^{-1}[F]$. Since $\mathrm{C}$ is a closure
operator, it follows that
$\mathrm{C}(X)\subseteq\mathrm{C}(g^{-1}[F])$.
By (2), $g^{-1}[F]$ is $\mathrm{C}$-closed, so
$\mathrm{C}(X)\subseteq g^{-1}[F]$. In particular,
$a\in g^{-1}[F]$, contradicting $g(a)\notin F$.
\end{proof}

\begin{definition}Let $\left<\mathbf{A},\mathrm{C}\right>$ and $\left<\mathbf{B},\mathrm{D}\right>$
be sequent algebras. We say that a monotone map $g:A\rightarrow B$ is a sequent
semi-homomorphism if satisfies any
of the equivalent conditions of Lemma \ref{lem:chacact homo contact}.
\end{definition}

\begin{corollary}\label{points as morphisms} For every sequent algebra
$\left<\mathbf{A},\mathrm{C}\right>$, there is a bijective correspondence
between $X(\mathbf{A})$ and the set of sequent semi-homomorphisms
from $\left<\mathbf{A},\mathrm{C}\right>$ to $\left<\mathbf{0},\mathsf{Fg}^{\mathbf{0}}\right>$,
where $\mathbf{0}$ stands for the two-element Boolean algebra. \end{corollary}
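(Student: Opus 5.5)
The plan is to use the characterization (2) of Lemma \ref{lem:chacact homo contact}, with $\left<\mathbf{B},\mathrm{D}\right>=\left<\mathbf{0},\mathsf{Fg}^{\mathbf{0}}\right>$, and to define the correspondence
\[
F\mapsto \chi_F,\qquad \chi_F(a)=1 \text{ iff } a\in F,
\]
with inverse $g\mapsto g^{-1}[\{1\}]$.

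\textbf{Step 1: the clans of the target.} First compute $X(\mathbf{0})$ for $\mathbf{0}=\{0,1\}$ with $\mathsf{Fg}$. The closed sets are the filters, namely $\{1\}$ and $\{0,1\}$. The only proper one is $\{1\}$. It is irreducible, since it is the ultrafilter (this also follows from Lemma \ref{lem: X(A) and ultrafilters}(1)). Hence $X(\mathbf{0})=\{\{1\}\}$. One should also check that $\left<\mathbf{0},\mathsf{Fg}\right>$ is a sequent algebra: $\mathbf{Fi}$, $\mathbf{One}$ and $\mathbf{Up}$ are immediate, and $\mathbf{Jo}$ holds because $\mathrm{Fg}(X,a)\cap\mathrm{Fg}(X,b)=\mathrm{Fg}(X,a\vee b)$ in any Boolean algebra.

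\textbf{Step 2: maps into $\mathbf{0}$.} A map $g\colon A\to\{0,1\}$ is monotone iff $g^{-1}[\{1\}]$ is an upset. By Lemma \ref{lem:chacact homo contact}, a monotone $g$ is a sequent semi-homomorphism iff $g^{-1}[\{1\}]\in X(\mathbf{A})$.

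\textbf{Step 3: the bijection.} For $F\in X(\mathbf{A})$, the set $F$ is an upset by $\mathbf{Up}$, so $\chi_F$ is monotone. Since $\chi_F^{-1}[\{1\}]=F$ is a clan, $\chi_F$ is a semi-homomorphism by Step 2. Conversely, each semi-homomorphism $g$ gives the clan $g^{-1}[\{1\}]$. The two assignments are mutually inverse because a $\{0,1\}$-valued map is determined by the preimage of $1$.

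The only mildly delicate point is Step 1, where one must make sure that $\{1\}$ is the unique clan of the two-element algebra, so that condition (2) of Lemma \ref{lem:chacact homo contact} reduces to a single preimage. Everything else is bookkeeping.
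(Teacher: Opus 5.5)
Your proposal is correct and follows the paper's proof. Both arguments identify $\{1\}$ as the unique clan of $\left<\mathbf{0},\mathsf{Fg}^{\mathbf{0}}\right>$ and then use condition (2) of Lemma \ref{lem:chacact homo contact} to match each clan $F$ with $\chi_F$ and each semi-homomorphism $h$ with $h^{-1}[\{1\}]$; your extra checks (that the target is a sequent algebra, and that $\chi_F$ is monotone because clans are upsets by $\mathbf{Up}$) are details the paper leaves implicit.
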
 
\begin{proof}
First, observe that the only irreducible closed subset of
$\left<\mathbf{0},\mathsf{Fg}^{\mathbf{0}}\right>$ is $\{1\}$,
which is also the only ultrafilter of $\mathbf{0}$. Let
$h:\left<\mathbf{A},\mathrm{C}\right>\to
\left<\mathbf{0},\mathsf{Fg}^{\mathbf{0}}\right>$
be a sequent semi-homomorphism. By Lemma
\ref{lem:chacact homo contact}, we have
$h^{-1}[\{1\}]\in X(\mathbf{A})$.

Conversely, let $F\in X(\mathbf{A})$, and let
$\chi_F:A\to\{0,1\}$ be the characteristic function of $F$.
Clearly, $\chi_F$ is monotone and
$\chi_F^{-1}[\{1\}]=F$. Hence, by Lemma
\ref{lem:chacact homo contact}, $\chi_F$ is a sequent
semi-homomorphism from
$\left<\mathbf{A},\mathrm{C}\right>$ to
$\left<\mathbf{0},\mathsf{Fg}^{\mathbf{0}}\right>$.

Therefore, the assignments
$h\mapsto h^{-1}[\{1\}]$ and $F\mapsto\chi_F$
establish the desired bijection.
\end{proof}

It is easy to see that the composition of two sequent semi-homomorphisms
is a sequent semi-homomorphism, and that the identity function on
a sequent algebra is a sequent semi-homomorphism. Thus we can define
the category $\mathsf{SA}$ whose objects are sequent algebras
and whose morphisms are sequent semi-homomorphisms. We write $\mathsf{SAB}$ for the subcategory of $\mathsf{SA}$, whose objects are sequent algebras and whose morphisms are the Boolean homomorphisms between them. In such a case the morphisms of $\mathsf{SAB}$ will be refered to as \emph{sequent homomorphisms}.
\medskip 

Let $\left<\mathbf{A},\vdash_{A}\right>$ and $\left<\mathbf{B},\vdash_{B}\right>$ be sequent algebras, and write $\mathrm{C}_{A}=\mathrm{C}_{\vdash_{A}}$ and $\mathrm{C}_{B}=\mathrm{C}_{\vdash_{B}}$. Observe that a join-homomorphism $h\colon A\rightarrow B$ preserving $1$ is a sequent semi-homomorphism if, for any $a_{1},\ldots,a_{n},b\in A$,
\[
a_{1},\ldots,a_{n}\vdash_{A}b\ \text{ implies }\ h(a_{1}),\ldots,h(a_{n})\vdash_{B}h(b).
\]
Equivalently,
\[
h(a_{1}),\ldots,h(a_{n})\nvdash_{B}h(b)\ \text{ implies }\ a_{1},\ldots,a_{n}\nvdash_{A}b.
\]

\begin{remark}\label{rem: sequent contact}
Recall that if $\left<\mathbf{A},\vdash_{A}\right>$ is a sequent
algebra, then we can define a contact relation $\mathsf{C}\subseteq A\times A$
as follows: 
\[
\left(a,b\right)\in\mathsf{C}\text{ iff }a,b\nvdash0.
\]
Thus, if $h:\left<\mathbf{A},\vdash_{A}\right>\rightarrow \left<\mathbf{B},\vdash_{B}\right>$ is a sequent semi-homomorphism, then $h$
is a contact homomorphism \cite{C2026}, because if 
\[
h(a),h(b)\nvdash_{B}0=h(0),
\]
then 
\[
a,b\nvdash_{A}0.
\]

\end{remark}

\begin{definition}
Let $\left<X_{1},\mathcal{B}_{1}\right>$ and $\left<X_{2},\mathcal{B}_{2}\right>$
be two sequent spaces. A function $f:X_{1}\rightarrow X_{2}$ is a
sequent map if $f^{-1}[U]\in \mathcal{B}_{1}$, for each $U\in \mathcal{B}_{2}$. We say
that $f$ is a \emph{sequent semi-morphism} if $f$ is a sequent map
and the function $f^{*}:\mathcal{B}_{2}\rightarrow \mathcal{B}_{1}$ given by $f^{*}(U)=f^{-1}[U]$
is a join-homomorphism that preserves top elements. Moreover, we say that $f$ is a \emph{sequent
morphism} if,  $f^{*}$ is a Boolean homomorphism between
$\mathcal{B}_{2}$ and $\mathcal{B}_{1}$.\end{definition}

Notice that every sequent semi-morphism, as well as every sequent morphism is continuous because the pre-images of closed
sets are closed sets.

\begin{remark}\label{rem: SeqSpm is not full}
Altough every sequent morphism is a sequent semi-morphism, the converse does not hold in general. To see this, consider $\mathbb{R}$ with its usual topology and the
sequent space $\langle\mathbb{R},\mathbf{RC}(\mathbb{R})\rangle$.
Let $\theta:\mathbb{R}\to\mathbb{R}$ be the constant map defined by
$\theta(x)=0$ for every $x\in\mathbb{R}$. For every
$U\in\mathrm{RC}(\mathbb{R})$, we have
\[
\theta^{\ast}(U)=
\begin{cases}
\mathbb{R} & \text{if }0\in U,\\
\emptyset & \text{if }0\notin U.
\end{cases}
\]
Hence, $\theta$ is a sequent map. Moreover,
$\theta^{\ast}(\emptyset)=\emptyset$ and
$\theta^{\ast}(\mathbb{R})=\mathbb{R}$. Furthermore, for every
$U,V\in\mathrm{RC}(\mathbb{R})$,
\[
\theta^{\ast}(U)\cup\theta^{\ast}(V)=
\begin{cases}
\mathbb{R} & \text{if }0\in U\cup V,\\
\emptyset & \text{if }0\notin U\cup V.
\end{cases}
\]
Thus,
\[
\theta^{\ast}(U)\cup\theta^{\ast}(V)
=\theta^{\ast}(U\cup V),
\]
so $\theta^{\ast}$ is a join-homomorphism. Nevertheless,
as shown in \cite{Goldblatt Mereocompactness},
$\theta^{\ast}$ preserves neither complementation nor binary meets
in $\mathrm{RC}(\mathbb{R})$. Therefore, $\theta$ is a sequent
semi-morphism that is not a sequent morphism.
\end{remark}

Let $\left<X_{1},\mathcal{B}_{1}\right>$,
$\left<X_{2},\mathcal{B}_{2}\right>$, and
$\left<X_{3},\mathcal{B}_{3}\right>$ be sequent spaces.
If $f:X_{1}\to X_{2}$ and $g:X_{2}\to X_{3}$ are sequent maps,
then
\[
f^{*}\circ g^{*}=(g\circ f)^{*}.
\]
Since $f$ and $g$ are sequent maps, the induced maps
$f^{*}$ and $g^{*}$ are join - homomorphisms that preserve top elements. Consequently,
$(g\circ f)^{*}$ is also a sequent semi-homomorphism.

Furthermore, for every sequent space $\left<X,\mathcal{B}\right>$,
the identity map $\mathrm{Id}_{X}:X\to X$ is a sequent map, and
$(\mathrm{Id}_{X})^{*}:\mathcal{B}\to\mathcal{B}$ is a sequent
semi-homomorphism. Thus, sequent spaces and sequent maps form a
category, denoted by $\mathsf{SeqSp}$. We denote by $\mathsf{SeqSp}_{\mathrm{m}}$ the subcategory of $\mathsf{SeqSp}$ whose morphisms are the sequent morphisms.  Observe that Remark \ref{rem: SeqSpm is not full} shows that the inclusion $\mathsf{SeqSp}_{\mathrm{m}}\hookrightarrow \mathsf{SeqSp}$ is not full.

\begin{proposition}\label{ssh to seq maps} Let $h:\langle\mathbf{A},\mathrm{C}\rangle\to\langle\mathbf{B},\mathrm{D}\rangle$
be a sequent semi-homomorphism. Then $h^{-1}:X(\mathbf{B})\to X(\mathbf{A})$ is a sequent semi-morphism between the sequent spaces $\langle X(\mathbf{B}),\mathcal{B}_{\mathbf{B}}\rangle$ and $\langle X(\mathbf{A}),\mathcal{B}_{\mathbf{A}}\rangle$.

\end{proposition}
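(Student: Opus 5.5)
Write $f=h^{-1}$, that is, $f(F)=h^{-1}[F]$ for $F\in X(\mathbf{B})$. The plan has three steps: check that $f$ is well defined, identify $f^{*}$ on basic closed sets, and transfer the algebraic properties of $h$ to $f^{*}$.

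\emph{Well-definedness.} Since $h$ is a sequent semi-homomorphism, condition (2) of Lemma \ref{lem:chacact homo contact} gives $h^{-1}[F]\in X(\mathbf{A})$ for every $F\in X(\mathbf{B})$. So $f\colon X(\mathbf{B})\to X(\mathbf{A})$ is a well-defined map. By Lemma \ref{lem: Algebras to spaces}, both $\langle X(\mathbf{A}),\mathcal{B}_{\mathbf{A}}\rangle$ and $\langle X(\mathbf{B}),\mathcal{B}_{\mathbf{B}}\rangle$ are sequent spaces.

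\emph{Key identity.} For every $a\in A$,
\[
f^{-1}[\beta_{\mathbf{A}}(a)]=\{F\in X(\mathbf{B})\colon a\in h^{-1}[F]\}=\{F\in X(\mathbf{B})\colon h(a)\in F\}=\beta_{\mathbf{B}}(h(a)).
\]
This unwinds definitions directly. It shows that $f^{*}(\beta_{\mathbf{A}}(a))=\beta_{\mathbf{B}}(h(a))\in\mathcal{B}_{\mathbf{B}}$, so $f$ is a sequent map. It also shows that $f^{*}\circ\beta_{\mathbf{A}}=\beta_{\mathbf{B}}\circ h$.

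\emph{Transfer of properties.} By condition (1) of Lemma \ref{lem:chacact homo contact}, $h$ is a join-homomorphism with $h(1)=1$. By Lemma \ref{lem: maps alpa and beta}(1), $\beta_{\mathbf{A}}$ and $\beta_{\mathbf{B}}$ are join-preserving order-embeddings, hence injective. Since $\beta_{\mathbf{A}}$ is onto $\mathcal{B}_{\mathbf{A}}$ and $f^{*}=\beta_{\mathbf{B}}\circ h\circ\beta_{\mathbf{A}}^{-1}$ on $\mathcal{B}_{\mathbf{A}}$, we get
\[
f^{*}(\beta_{\mathbf{A}}(a)\cup\beta_{\mathbf{A}}(b))=f^{*}(\beta_{\mathbf{A}}(a\vee b))=\beta_{\mathbf{B}}(h(a)\vee h(b))=f^{*}(\beta_{\mathbf{A}}(a))\cup f^{*}(\beta_{\mathbf{A}}(b)).
\]
For the constants, $f^{*}(\emptyset)=f^{*}(\beta_{\mathbf{A}}(0))=\beta_{\mathbf{B}}(h(0))=\beta_{\mathbf{B}}(0)=\emptyset$. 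Likewise $f^{*}(X(\mathbf{A}))=f^{*}(\beta_{\mathbf{A}}(1))=\beta_{\mathbf{B}}(1)=X(\mathbf{B})$. We use here that every clan contains $1$ (by $\mathbf{One}$ and $\mathbf{Up}$, as $1\in\mathrm{C}(\emptyset)$) and that no clan contains $0$ (clans are proper upsets).

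The only point needing care is that the Boolean operations of $\mathcal{B}_{\mathbf{A}}$ are those of $\mathbf{RC}(X(\mathbf{A}))$: the join is union, the bottom is $\emptyset$ and the top is $X(\mathbf{A})$. This is granted by Lemma \ref{lem: maps alpa and beta}(5). Hence $f^{*}$ is a join-homomorphism preserving the top element, and $f$ is a sequent semi-morphism. I expect no real obstacle beyond carefully stating the identity $f^{*}\circ\beta_{\mathbf{A}}=\beta_{\mathbf{B}}\circ h$.
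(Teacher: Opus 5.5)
Your proof is correct and follows the paper's argument. Both use Lemma \ref{lem:chacact homo contact}(2) for well-definedness and the commuting square $(h^{-1})^{-1}\circ\beta_{\mathbf{A}}=\beta_{\mathbf{B}}\circ h$ to transfer binary joins, $0$ and $1$ from $h$ to $f^{*}$. You also state explicitly that $f$ is a sequent map, i.e.\ that $f^{-1}[\beta_{\mathbf{A}}(a)]=\beta_{\mathbf{B}}(h(a))\in\mathcal{B}_{\mathbf{B}}$, which the paper leaves implicit in its diagram; once this is established, $f^{*}$ preserves unions, $\emptyset$ and the top automatically, since it is an inverse-image map.
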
 
\begin{proof}
From Lemma \ref{lem:chacact homo contact} (2) it is clear that $h^{-1}$
maps clans to clans. For the last part, notice that
if $\nu:=h^{-1}$, then, the following diagram 
\begin{equation}
\xymatrix{\mathcal{P}(X(\mathbf{A}))\ar[r]^-{\nu^{-1}} & \mathcal{P}(X(\mathbf{B}))\\
\mathbf{A}\ar@{_{(}->}[u]^-{\beta_{\mathbf{A}}}\ar[r]_-{h} & \mathbf{B}\ar@{_{(}->}[u]_-{\beta_{\mathbf{B}}}
}
\label{commutative diagram}
\end{equation}
commutes. So, from the latter, the fact that $h$ is a sequent
semi-homomorphism and Lemma \ref{lem: maps alpa and beta} (1), it is not hard to see that for every $a,b\in A$:
\[
\nu^{-1}(\beta_{\mathbf{A}}(a)\cup\beta_{\mathbf{A}}(b))=\nu^{-1}(\beta_{\mathbf{A}}(a))\cup\nu^{-1}(\beta_{\mathbf{A}}(b)).
\]
Finally, from Lemma \ref{props SequentALg}, it follows that $\beta_{\mathbf{A}}(1)=X(\mathbf{A})$
and $\beta_{\mathbf{B}}(1)=X(\mathbf{B})$. Thus $\nu^{-1}(X(\mathbf{A}))=X(\mathbf{B})$.
Similarly, since $\beta_{\mathbf{A}}(0)=\emptyset=\beta_{\mathbf{B}}(0)$,
one gets $\nu^{-1}(\emptyset)=\emptyset$. This shows that $h^{-1}$ is a a sequent semi-homomorphism. 
\end{proof}

\begin{proposition}\label{seq maps to ssh} Let $\left<X_{1},\mathcal{B}_{1}\right>$
and $\left<X_{2},\mathcal{B}_{2}\right>$ be two sequent spaces. Let $f:X_{1}\rightarrow X_{2}$
be a sequent semi-morphism. Then the function $f^{*}:\mathcal{B}_{2}\rightarrow \mathcal{B}_{1}$
is a sequent semi-homomorphism between the sequent algebras $\left<\mathcal{B}_{2},\mathrm{C}_{\vdash_{B_{2}}}\right>$
and $\left<\mathcal{B}_{1},\mathrm{C}_{\vdash_{B_{1}}}\right>$.

\end{proposition}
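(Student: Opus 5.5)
The plan is to verify condition (1) of Lemma \ref{lem:chacact homo contact} directly for $g=f^{*}$, with $\mathrm{C}=\mathrm{C}_{\vdash_{\mathcal{B}_{2}}}$ and $\mathrm{D}=\mathrm{C}_{\vdash_{\mathcal{B}_{1}}}$. Several of the needed properties come for free from the definition of sequent semi-morphism. Since $f$ is a sequent map, $f^{*}(U)=f^{-1}[U]\in\mathcal{B}_{1}$ for each $U\in\mathcal{B}_{2}$, so $f^{*}$ is well defined. By hypothesis $f^{*}$ is a join-homomorphism preserving the top element, so $f^{*}(U\cup V)=f^{*}(U)\cup f^{*}(V)$, $f^{*}(\emptyset)=\emptyset$ and $f^{*}(X_{2})=X_{1}$. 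Monotonicity is immediate, because preimages preserve inclusion; recall that the order in $\mathcal{B}_{i}\subseteq\mathbf{RC}(X_{i})$ is set inclusion.

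The only remaining condition is $f^{*}[\mathrm{C}(\mathcal{Y})]\subseteq\mathrm{D}(f^{*}[\mathcal{Y}])$ for each $\mathcal{Y}\subseteq\mathcal{B}_{2}$. Both operators are finitary, so it suffices to take a finite $\{U_{1},\ldots,U_{n}\}\subseteq\mathcal{Y}$ and $U\in\mathcal{B}_{2}$ with $U_{1},\ldots,U_{n}\vdash_{\mathcal{B}_{2}}U$, that is,
\[
U_{1}\cap\cdots\cap U_{n}\subseteq U .
\]
Taking preimages, which commute with finite intersections, we get
\[
f^{-1}[U_{1}]\cap\cdots\cap f^{-1}[U_{n}]=f^{-1}[U_{1}\cap\cdots\cap U_{n}]\subseteq f^{-1}[U].
\]
This says exactly that $f^{*}(U_{1}),\ldots,f^{*}(U_{n})\vdash_{\mathcal{B}_{1}}f^{*}(U)$, hence $f^{*}(U)\in\mathrm{D}(f^{*}[\mathcal{Y}])$. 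The case $n=0$ is covered by $\mathbf{One}$: $U=X_{2}$ forces $f^{*}(U)=X_{1}\in\mathrm{D}(\emptyset)$.

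Be careful about one point, which is the main potential obstacle. The sequent relation is defined via set-theoretic intersection, not via the regular-closed meet $\overline{\cap}$. The argument uses only that preimages preserve set-theoretic intersections, so it never needs $f^{*}$ to preserve meets in $\mathbf{RC}$, which in general it does not (Remark \ref{rem: SeqSpm is not full}). With these checks, (1) of Lemma \ref{lem:chacact homo contact} holds, and $f^{*}$ is a sequent semi-homomorphism.
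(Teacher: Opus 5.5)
Your proposal is correct and follows essentially the same route as the paper: verify condition (1) of Lemma \ref{lem:chacact homo contact}, taking join- and top-preservation from the definition of sequent semi-morphism, and obtaining the sequent condition from the fact that preimages commute with finite intersections. Your explicit reduction to finite sets of premises and your treatment of the case $n=0$ are just slightly more detailed than the paper's one-line argument.
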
 
\begin{proof}
It is clear that $f^{*}$ is a join-homomorphism that preserves the top element between the Boolean
algebras $\mathcal{B}_{2}$ and $\mathcal{B}_{1}$. We prove that $f^{*}$ satisfies condition
(1) of Lemma \ref{lem:chacact homo contact}. Let $U_{1},\ldots,U_{n},U\in \mathcal{B}_{2}$.
Suppose that $U_{1},\ldots,U_{n}\vdash_{\mathcal{B}_{2}}U$, i.e.,
$U_{1}\cap\ldots\cap U_{n}\subseteq U$. Then $f^{-1}[U_{1}]\cap\ldots\cap f^{-1}[U_{n}]\subseteq f^{-1}[U]$.
Thus, $f^{*}[U_{1}],\ldots,f^{*}[U_{n}]\vdash_{\mathcal{B}_{1}}f^{*}[U]$. 
\end{proof}

\subsection{The duality theorem}
In this subsection, we characterize isomorphisms in the categories under consideration and construct canonical representations of sequent algebras and sequent spaces. We also establish the naturality of these representations.
\medskip

 Now, we characterize isomorphisms in $\mathsf{SA}$.

\begin{proposition}\label{isos of sequent algebras} A sequent semi-homomorphism
$h:\langle\mathbf{A},\mathrm{C}\rangle\to\langle\mathbf{B},\mathrm{D}\rangle$
is an isomorphism if and only if $h$ is bijective and $h[\mathrm{C}(Z)]=\mathrm{D}(h[Z])$,
for every $Z\in\mathcal{P}(A)$. \end{proposition}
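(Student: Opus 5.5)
We argue both directions directly from the definitions in $\mathsf{SA}$, where an isomorphism is a sequent semi-homomorphism $h$ admitting a sequent semi-homomorphism $g\colon\langle\mathbf{B},\mathrm{D}\rangle\to\langle\mathbf{A},\mathrm{C}\rangle$ with $g\circ h=\mathrm{id}_A$ and $h\circ g=\mathrm{id}_B$. We use the characterization (1) of Lemma \ref{lem:chacact homo contact}.

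For the forward direction, suppose $h$ is an isomorphism with inverse $g$. Then $h$ is bijective and $g=h^{-1}$. Since $h$ is a sequent semi-homomorphism, we already have $h[\mathrm{C}(Z)]\subseteq\mathrm{D}(h[Z])$ for every $Z\subseteq A$. For the reverse inclusion, apply condition (1) of Lemma \ref{lem:chacact homo contact} to $g$ with the set $h[Z]$. This gives
\[
g[\mathrm{D}(h[Z])]\subseteq\mathrm{C}(g[h[Z]])=\mathrm{C}(Z).
\]
Applying $h$ to both sides and using $h\circ g=\mathrm{id}_B$ yields $\mathrm{D}(h[Z])\subseteq h[\mathrm{C}(Z)]$, so equality holds.

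For the converse, suppose $h$ is bijective and $h[\mathrm{C}(Z)]=\mathrm{D}(h[Z])$ for every $Z$. Let $g=h^{-1}$; it remains to show that $g$ is a sequent semi-homomorphism. We first show that $g$ is monotone, which is the main point since bijectivity alone does not give it. By Remark \ref{rem: order C}, $a\leq a'$ iff $\mathrm{C}(a')\subseteq\mathrm{C}(a)$, and likewise for $\mathrm{D}$. The hypothesis with $Z=\{a\}$ gives $h[\mathrm{C}(a)]=\mathrm{D}(h(a))$. Since $h$ is a bijection, $\mathrm{C}(a')\subseteq\mathrm{C}(a)$ iff $h[\mathrm{C}(a')]\subseteq h[\mathrm{C}(a)]$, that is, iff $\mathrm{D}(h(a'))\subseteq\mathrm{D}(h(a))$. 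Hence $a\leq a'$ iff $h(a)\leq h(a')$, so $h$ is an order isomorphism and $g$ is monotone.

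Next, $g$ is a join-homomorphism preserving $1$. This holds because an order isomorphism between Boolean algebras preserves all existing joins, meets, $0$ and $1$. In fact $h$ is then a Boolean isomorphism, since complements are determined by the order, and so $g$ is one too.

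Finally, for $W\subseteq B$ put $Z=g[W]$, so that $h[Z]=W$. The hypothesis gives $h[\mathrm{C}(g[W])]=\mathrm{D}(W)$. Applying $g$ yields $g[\mathrm{D}(W)]=\mathrm{C}(g[W])$, which is in particular the inclusion required in condition (1) of Lemma \ref{lem:chacact homo contact}. Therefore $g$ is a sequent semi-homomorphism inverse to $h$, and $h$ is an isomorphism in $\mathsf{SA}$.
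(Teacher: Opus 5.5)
Your proof is correct and follows the paper's argument in both directions: the missing inclusion $\mathrm{D}(h[Z])\subseteq h[\mathrm{C}(Z)]$ comes from applying the semi-homomorphism property of $g=h^{-1}$ to $h[Z]$, and the converse applies the hypothesis to $Z=h^{-1}[W]$, exactly as in the paper. The only difference is your detour through Remark~\ref{rem: order C} to get monotonicity of $h^{-1}$; it is valid but not really ``the main point'', since $h$ already preserves binary joins, $0$ and $1$, so its inverse automatically does too (if $x=h(a)$ and $y=h(b)$, then $h^{-1}(x\vee y)=a\vee b$) and is therefore monotone, which is what the paper's ``it is easy to see'' refers to.
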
 
\begin{proof}
On the one hand, let us assume that $h$ is an isomorphism in $\mathsf{SA}$.
Then, it is clear that the existence of the inverse of $h$ in $\mathsf{SA}$,
say $g$, makes $h$ bijective. Now take $Z\in\mathcal{P}(A)$. Since
$h$ is a sequent semi-homomorphism by assumption, in order to prove our
claim, it only remains to prove that $\mathrm{D}(h[Z])\subseteq h[\mathrm{C}(Z)]$.
Indeed, since $h[Z]\in\mathcal{P}(B)$ and $g$ is a sequent semi-homomorphism,
we have 
\[
g[\mathrm{D}(h[Z])]\subseteq\mathrm{C}(g[h[Z]])=\mathrm{C}(Z).
\]
From this fact, it is readily seen that the claim holds.
On the other hand, let us assume that $h$ is bijective and $h[\mathrm{C}(Z)]=\mathrm{D}(h[Z])$.
We only need to check that the inverse map of $h$
is a sequent semi-homomorphism. It is easy to see that that $h^{-1}$ is a join-homomorphism that preserves $1$. It remains to show that for every $W\in\mathcal{P}(B)$,
$(h^{-1})[\mathrm{D}(W)]\subseteq\mathrm{C}((h^{-1})[W])$.
Certainly, since $W=h[(h^{-1})[W]]$, then, by hypothesis, we get
\[
\mathrm{D}(W)=\mathrm{D}(h[(h^{-1})[W]])\subseteq h[\mathrm{C}((h^{-1})[W])].
\]
 The bijectivity of $h^{-1}$ clearly allows us to conclude that $(h^{-1})[\mathrm{D}(W)]\subseteq\mathrm{C}((h^{-1})[W])$, thus, $h^{-1}$ is a sequent semi-homomorphism, as desired.
\end{proof}

Observe that Proposition \ref{isos of sequent algebras} allows us
to say that an isomorphism of sequent algebras is precisely a Bounded join-semilattice isomorphism  $h:\mathbf{A}\rightarrow\mathbf{B}$ such that 
\[
h(a_{1}),\ldots,h(a_{n})\vdash_{B}h(b)\;\text{ iff }\;a_{1},\ldots,a_{n}\vdash_{A}b.
\]

Let $\left<\mathbf{A},\mathrm{C}\right>$ be a sequent algebra. Recall that by Lemma \ref{lem: Algebras to spaces}, 
$\left<X(\mathbf{A}),\tau_{\beta_{\mathbf{A}}\left[A\right]}\right>$ is a sequent space, so by Lemma \ref{lem: maps alpa and beta}, $\left<\mathcal{B}_\mathbf{A},\mathrm{C}_{\vdash_{\mathcal{B}_\mathbf{A}}}\right>$ is again a sequent algebra.

\begin{lemma} \label{unit duality} The  map $\beta_\mathbf{A}:A\rightarrow\beta_\mathbf{A}\left[A\right]$
is a sequent algebra isomorphism between the sequent algebras $\left<\mathbf{A},\mathrm{C}\right>$ and $\left<\mathcal{B}_\mathbf{A},\mathrm{C}_{\vdash_{\mathcal{B}_\mathbf{A}}}\right>$. Moreover, this isomorphism is natural.
\end{lemma}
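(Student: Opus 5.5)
The plan is to use the characterization of isomorphisms in $\mathsf{SA}$ given by Proposition \ref{isos of sequent algebras}. Accordingly, I will check three things: that $\beta_{\mathbf{A}}$ is a sequent semi-homomorphism; that it is bijective onto $\beta_{\mathbf{A}}[A]$; and that $\beta_{\mathbf{A}}[\mathrm{C}(Z)]=\mathrm{C}_{\vdash_{\mathcal{B}_\mathbf{A}}}(\beta_{\mathbf{A}}[Z])$ for every $Z\subseteq A$.

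Bijectivity is immediate. Surjectivity onto $\beta_{\mathbf{A}}[A]$ holds by definition, and injectivity follows because $\beta_{\mathbf{A}}$ is an order-embedding by Lemma \ref{lem: maps alpa and beta} (1). Lemma \ref{lem: maps alpa and beta} (1) also shows that $\beta_{\mathbf{A}}$ preserves binary joins and $0$, since $\beta_{\mathbf{A}}(0)=\emptyset$. It preserves $1$ because every clan is a nonempty upset, so $\beta_{\mathbf{A}}(1)=X(\mathbf{A})$. By Lemma \ref{lem: maps alpa and beta} (3) and (5), $\beta_{\mathbf{A}}$ is in fact a Boolean isomorphism onto $\mathcal{B}_\mathbf{A}$.

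The key step is the closure equation, and I would obtain it from Lemma \ref{lem: maps alpa and beta} (6). Since both closure operators are finitary, it suffices to treat finite $Z=\{a_1,\ldots,a_n\}$; for arbitrary $Z$ we then take unions over finite subsets. Recall that $U\in\mathrm{C}_{\vdash_{\mathcal{B}_\mathbf{A}}}(\{\beta_{\mathbf{A}}(a_1),\ldots,\beta_{\mathbf{A}}(a_n)\})$ means $U\in\beta_{\mathbf{A}}[A]$ and $\beta_{\mathbf{A}}(a_1)\cap\cdots\cap\beta_{\mathbf{A}}(a_n)\subseteq U$. Writing $U=\beta_{\mathbf{A}}(b)$, Lemma \ref{lem: maps alpa and beta} (6) says this is equivalent to $b\in\mathrm{C}(\{a_1,\ldots,a_n\})$. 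This yields both inclusions at once, and hence $\beta_{\mathbf{A}}[\mathrm{C}(Z)]=\mathrm{C}_{\vdash_{\mathcal{B}_\mathbf{A}}}(\beta_{\mathbf{A}}[Z])$. The empty case is covered by $\mathbf{One}$ together with $\beta_{\mathbf{A}}(1)=X(\mathbf{A})$. Proposition \ref{isos of sequent algebras} then gives that $\beta_{\mathbf{A}}$ is an isomorphism.

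For naturality, let $h\colon\langle\mathbf{A},\mathrm{C}\rangle\to\langle\mathbf{B},\mathrm{D}\rangle$ be a sequent semi-homomorphism. By Proposition \ref{ssh to seq maps}, $h^{-1}\colon X(\mathbf{B})\to X(\mathbf{A})$ is a sequent semi-morphism, and by Proposition \ref{seq maps to ssh} its dual $(h^{-1})^{*}\colon\mathcal{B}_\mathbf{A}\to\mathcal{B}_\mathbf{B}$ is a sequent semi-homomorphism. Naturality then amounts to the identity $(h^{-1})^{*}(\beta_{\mathbf{A}}(a))=\beta_{\mathbf{B}}(h(a))$. This is exactly the commutativity of diagram \eqref{commutative diagram}, which is checked directly: $F\in(h^{-1})^{-1}[\beta_{\mathbf{A}}(a)]$ iff $a\in h^{-1}[F]$ iff $h(a)\in F$. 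The main obstacle is bookkeeping rather than mathematics: the functors on objects and morphisms ($\mathbf{A}\mapsto X(\mathbf{A})\mapsto\mathcal{B}_\mathbf{A}$, $h\mapsto(h^{-1})^{*}$) must be set up so that the square is literally the one being verified.
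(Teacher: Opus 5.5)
Your proposal is correct and follows the paper's proof. Like the paper, you combine Proposition~\ref{isos of sequent algebras} with Lemma~\ref{lem: maps alpa and beta}~(1) for the isomorphism, and you obtain naturality from the commutativity of diagram~\eqref{commutative diagram}, i.e.\ $(h^{-1})^{*}\circ\beta_{\mathbf{A}}=\beta_{\mathbf{B}}\circ h$. The only difference is that you explicitly check the closure condition $\beta_{\mathbf{A}}[\mathrm{C}(Z)]=\mathrm{C}_{\vdash_{\mathcal{B}_\mathbf{A}}}(\beta_{\mathbf{A}}[Z])$ required by that proposition, using Lemma~\ref{lem: maps alpa and beta}~(6); the paper's proof leaves this step implicit.
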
 
\begin{proof}
Combining Lemma \ref{lem: maps alpa and beta} (1) and Proposition \ref{isos of sequent algebras} yield that $\beta_{\mathbf{A}}$ is an
isomorphism of sequent algebras. Now, we prove naturality. Let $h:\left<\mathbf{A},\mathrm{C}_{\mathbf{A}}\right>
\longrightarrow
\left<\mathbf{B},\mathrm{C}_{\mathbf{B}}\right>$ be a sequent semi-homomorphism, and let $\widehat{h}:X(\mathbf{B})\longrightarrow X(\mathbf{A})$ with $\widehat{h}(F)=h^{-1}[F]$, be the induced map. By Proposition~\ref{ssh to seq maps},
$\widehat{h}$ is a sequent semi-morphism and by diagram \eqref{commutative diagram} its induced map $\widehat{h}^{\,*}:
\mathcal{B}_{\mathbf{A}}\longrightarrow
\mathcal{B}_{\mathbf{B}}$ satisfies, for every $a\in A$,
\[
\begin{aligned}
\widehat{h}^{\,*}(\beta_{\mathbf{A}}(a))
&=\widehat{h}^{-1}[\beta_{\mathbf{A}}(a)]\\
&=\{F\in X(\mathbf{B}):a\in h^{-1}[F]\}\\
&=\{F\in X(\mathbf{B}):h(a)\in F\}\\
&=\beta_{\mathbf{B}}(h(a)).
\end{aligned}
\]
Thus,
\[
\widehat{h}^{\,*}\circ\beta_{\mathbf{A}}
=\beta_{\mathbf{B}}\circ h,
\]
which proves the naturality of $\beta$.
\end{proof}

The following result is straightforward. It provides a characterization of isomorphisms in $\mathsf{SeqSp}$.

\begin{proposition}\label{prop: isos SeqSp} 
A sequent map $f:X_{1}\rightarrow X_{2}$ is
an isomorphism in $\mathsf{SeqSp}$ iff the following hold: 
\begin{enumerate}
\item $f$ is bijective. 
\item For each $U\in\mathcal{B}_{1}$, $f\left[U\right]=\left\{ f(x):x\in U\right\} \in\mathcal{B}_{2}$. 
\end{enumerate}
\end{proposition}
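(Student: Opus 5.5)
We prove both directions directly from the definitions, working in $\mathsf{SeqSp}$, whose morphisms are the sequent maps. Recall that a sequent map $f\colon X_1\to X_2$ satisfies $f^{-1}[U]\in\mathcal{B}_1$ for every $U\in\mathcal{B}_2$, and that $f^{*}$ is then a join-homomorphism preserving top elements.

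\emph{Necessity.} Suppose $f$ is an isomorphism with inverse sequent map $g\colon X_2\to X_1$, so that $g\circ f=\mathrm{Id}_{X_1}$ and $f\circ g=\mathrm{Id}_{X_2}$. Then $f$ is bijective and $g=f^{-1}$ as functions. Let $U\in\mathcal{B}_1$. Since $g$ is a sequent map, $g^{-1}[U]\in\mathcal{B}_2$. Because $g$ is the inverse of $f$, we have $g^{-1}[U]=\{y\in X_2: g(y)\in U\}=f[U]$. Hence $f[U]\in\mathcal{B}_2$, which is condition (2).

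\emph{Sufficiency.} Assume (1) and (2), and let $g=f^{-1}\colon X_2\to X_1$ be the set-theoretic inverse. By bijectivity, $g^{-1}[U]=f[U]$ for every $U\subseteq X_1$, so (2) says exactly that $g^{-1}[U]\in\mathcal{B}_2$ for each $U\in\mathcal{B}_1$. Thus $g$ is a sequent map, and $g\circ f$ and $f\circ g$ are the identities, so $f$ is an isomorphism in $\mathsf{SeqSp}$.

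The only step needing any care is checking that $g^{*}$ meets whatever extra requirements the category imposes on morphisms, if it is read as $\mathsf{SeqSp}$ with sequent semi-morphisms. This follows because preimage under any function preserves unions, $\emptyset$ and the whole space, so $g^{*}$ is automatically a join-homomorphism preserving $0$ and $1$. Moreover, $g^{*}$ is the inverse of $f^{*}$: $f^{*}$ is a bijection $\mathcal{B}_2\to\mathcal{B}_1$ with inverse $U\mapsto f[U]$, using surjectivity of $f$ to get $f[f^{-1}[V]]=V$. Hence, by Proposition~\ref{isos of sequent algebras}, $f^{*}$ is even an isomorphism of the associated sequent algebras, and nothing further needs to be verified. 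No real obstacle is expected.
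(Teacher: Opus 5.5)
Your proof is correct and is exactly the direct verification the paper has in mind when it calls this result straightforward and omits the proof: for a bijection $f$ the preimage of $U$ under $f^{-1}$ is $f[U]$, so condition (2) says precisely that $f^{-1}$ is a sequent map, and, as you note, every sequent map is automatically a sequent semi-morphism because preimages preserve unions, $\emptyset$ and the whole space. Your closing appeal to Proposition~\ref{isos of sequent algebras} is not needed, and as written it skips checking $f^{*}[\mathrm{C}_{\vdash_{\mathcal{B}_2}}(Z)]=\mathrm{C}_{\vdash_{\mathcal{B}_1}}(f^{*}[Z])$; it is simpler to say that $f^{*}$ and $(f^{-1})^{*}$ are mutually inverse sequent semi-homomorphisms by Proposition~\ref{seq maps to ssh}.
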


Let $\left<X,\mathcal{B}\right>$ be a sequent space. We consider the function
$\varepsilon_{X}:X\rightarrow X(\mathcal{B})$ given by 
\[
\varepsilon_{X}(x)=\left\{ U\in \mathcal{B}:x\in U\right\} .
\]
For each $x\in X$, the set $\varepsilon_{X}(x)$ is an upset, $\emptyset\notin\varepsilon_{X}(x)$,
and if $U\cup V\in\varepsilon_{X}(x)$, then $U\in\varepsilon_{X}(x)$
or $V\in\varepsilon_{X}(x)$. Moreover, if $U_{1},\ldots,U_{n}\in\varepsilon_{X}(x)$
and $U_{1},\ldots,U_{n}\vdash_{B}U$, then $U\in\varepsilon_{X}(x)$.
Hence $\varepsilon_{X}(x)$ is a clan of the sequent algebra $\langle \mathcal{B}, \mathrm{C}_{\vdash_{\mathcal{B}}}\rangle$.

\begin{proposition}\label{counit duality}
Let $\left<X,\mathcal{B}\right>$ be a sequent space, and let
$\left<\mathcal{B},\mathrm{C}_{\vdash_{\mathcal{B}}}\right>$
be its associated sequent algebra. Then
$\left<X,\mathcal{B}\right>$ and
$\left<X(\mathcal{B}),\mathcal{B}_{\mathcal{B}_X}\right>$
are isomorphic in $\mathsf{SeqSp}$, where $\mathcal{B}_X$ is
the subalgebra of $\mathbf{RC}(X(\mathcal{B}))$ whose universe is
$\beta_{\mathcal{B}}[\mathcal{B}]$. Moreover, this isomorphism
is natural.
\end{proposition}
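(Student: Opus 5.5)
We will show that $\varepsilon_X\colon X\to X(\mathcal{B})$ is an isomorphism in $\mathsf{SeqSp}$ by verifying the two conditions of Proposition~\ref{prop: isos SeqSp}. We have already seen that $\varepsilon_X(x)$ is a clan of $\langle\mathcal{B},\mathrm{C}_{\vdash_{\mathcal{B}}}\rangle$, so $\varepsilon_X$ is well defined.

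First we check that $\varepsilon_X$ is a sequent map. For $U\in\mathcal{B}$ we have
\[
\varepsilon_X^{-1}[\beta_{\mathcal{B}}(U)]=\{x\in X: U\in\varepsilon_X(x)\}=U\in\mathcal{B}.
\]
Hence $\varepsilon_X^{*}\circ\beta_{\mathcal{B}}=\mathrm{id}_{\mathcal{B}}$, and in particular $\varepsilon_X^{*}$ is a Boolean homomorphism, so $\varepsilon_X$ is even a sequent morphism.

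Next, injectivity. This follows from the $T_0$ axiom together with the fact that $\mathcal{B}$ is a basis of closed sets. If $x\neq y$, some open set separates them, so some basic closed set $U\in\mathcal{B}$ contains exactly one of $x,y$. Then $U$ lies in exactly one of $\varepsilon_X(x),\varepsilon_X(y)$, so these clans differ.

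Surjectivity is the main step, and it is exactly where condition~(\ref{condition compacity}) of Definition~\ref{def:A-compingent-topological space} is used. Let $F\in X(\mathcal{B})$ be a clan and set $\Delta=\{U\in\mathcal{B}: U\notin F\}$.
\begin{enumerate}
\item Suppose first that $\bigcap F\subseteq\bigcup\Delta$. Condition~(\ref{condition compacity}) then gives $F\cap\Delta\neq\emptyset$, which is absurd by the definition of $\Delta$.
\item Hence we may pick $x\in\bigcap F$ with $x\notin\bigcup\Delta$. For every $U\in\mathcal{B}$, we have $x\in U$ iff $U\in F$: membership in $\bigcap F$ gives one direction, and avoiding $\bigcup\Delta$ gives the other.
\item Therefore $\varepsilon_X(x)=F$.
\end{enumerate}
We should also handle the degenerate case $F=\emptyset$, but it cannot occur: clans contain $1=X$ by $\mathbf{One}$, so $\bigcap F$ is taken over a nonempty family.

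For condition~(2) of Proposition~\ref{prop: isos SeqSp}, take $U\in\mathcal{B}$. Using bijectivity,
\[
\varepsilon_X[U]=\{\varepsilon_X(x): U\in\varepsilon_X(x)\}=\beta_{\mathcal{B}}(U)\in\mathcal{B}_{\mathcal{B}_X}.
\]
Thus $\varepsilon_X$ is an isomorphism in $\mathsf{SeqSp}$, and its inverse is again a sequent morphism.

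Finally, naturality. Let $f\colon X_1\to X_2$ be a sequent semi-morphism. We must check that $\widehat{f^{*}}\circ\varepsilon_{X_1}=\varepsilon_{X_2}\circ f$, where $\widehat{f^{*}}(F)=(f^{*})^{-1}[F]$ as in Proposition~\ref{ssh to seq maps}. For $x\in X_1$ and $V\in\mathcal{B}_2$,
\[
V\in(f^{*})^{-1}[\varepsilon_{X_1}(x)]\iff x\in f^{-1}[V]\iff f(x)\in V\iff V\in\varepsilon_{X_2}(f(x)),
\]
which gives the required identity. The only genuinely nontrivial point in the whole argument is surjectivity, which relies on the compactness-type axiom; every other step is a direct unfolding of definitions.
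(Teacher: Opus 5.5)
Your proof is correct and follows essentially the same route as the paper. Injectivity comes from $T_0$ plus the closed basis, and surjectivity from condition~(\ref{condition compacity}) with $\Delta=\mathcal{B}\setminus F$. The identity $\varepsilon_X^{-1}[\beta_{\mathcal{B}}(U)]=U$ gives $\varepsilon_X[U]=\beta_{\mathcal{B}}(U)$, so Proposition~\ref{prop: isos SeqSp} applies, and naturality is the same one-line computation. Your explicit observation that $\varepsilon_X^{*}=\beta_{\mathcal{B}}^{-1}$ is a Boolean isomorphism (by Lemma~\ref{lem: maps alpa and beta}) is a small welcome addition, since it shows $\varepsilon_X$ and its inverse are even sequent morphisms.
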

\begin{proof}
We first prove that $\varepsilon_{X}$ is bijective. Injectivity follows
from the $T_{0}$ property and the fact that $\mathcal{B}$ is a basis of closed
sets: if $x\neq y$, some $U\in \mathcal{B}$ contains exactly one of $x,y$,
so $\varepsilon_{X}(x)\neq\varepsilon_{X}(y)$. For surjectivity, let $F\in X(\mathcal{B})$ be a clan and put $\Delta=B\setminus F$.
Since $F\cap\Delta=\emptyset$, condition~\ref{condition compacity}
in Definition \ref{def:A-compingent-topological space} implies 
\[
\bigcap F\nsubseteq\bigcup\Delta.
\]
Choose $x\in\bigcap F\setminus\bigcup\Delta$. Then, for every $U\in \mathcal{B}$,
$x\in U$ if and only if $U\in F$. Consequently $\varepsilon_{X}(x)=F$. Further, observe that for every $U\in \mathcal{B}$, 
\[
\varepsilon^{-1}_{X}[\beta_{\mathcal{B}}(U)]=U.
\]
Since $\varepsilon_{X}$ is bijective, this also gives $\varepsilon_{X}[U]=\beta_{\mathcal{B}}(U)$.
Thus $\varepsilon_{X}$ and its inverse preserve the specified bases, so
Proposition \ref{prop: isos SeqSp} 
yields the claim.

To prove naturality, let
$f:X_1\to X_2$ be a sequent semi-morphism between the sequent
spaces $\langle X_1,\mathcal{B}_1\rangle$ and
$\langle X_2,\mathcal{B}_2\rangle$. Let $f^*:\mathcal{B}_2\to\mathcal{B}_1$ defined as $f^*(U)=f^{-1}[U]$ 
be the induced sequent semi-homomorphism, and define
\[
\widehat{f}:X(\mathcal{B}_1)\to X(\mathcal{B}_2),
\qquad
\widehat{f}(F)=(f^*)^{-1}[F].
\]
By Proposition \ref{ssh to seq maps}, $\widehat{f}$ is a
sequent semi-morphism.

Observe that for every $x\in X_1$,
\[
\begin{aligned}
\widehat{f}(\varepsilon_{X_1}(x))
&=(f^*)^{-1}[\varepsilon_{X_1}(x)]\\
&=\{U\in\mathcal{B}_2:f^*(U)\in\varepsilon_{X_1}(x)\}\\
&=\{U\in\mathcal{B}_2:x\in f^{-1}[U]\}\\
&=\{U\in\mathcal{B}_2:f(x)\in U\}\\
&=\varepsilon_{X_2}(f(x)).
\end{aligned}
\]
Hence,
$\widehat{f}\circ\varepsilon_{X_1}
=\varepsilon_{X_2}\circ f$, which proves the naturality of
the isomorphism.
\end{proof}

Let $h:\langle\mathbf{A},\mathrm{C}\rangle\to\langle\mathbf{B},\mathrm{D}\rangle$
be a sequent semi-homomorphism. By Lemma \ref{lem: Algebras to spaces} and Proposition \ref{ssh to seq maps},
it is not hard to see that the assignments $F(\langle\mathbf{A},\mathrm{C}_{\mathbf{A}}\rangle)=\langle X(\mathbf{A}),\mathcal{B}_{\mathbf{A}}\rangle$
and $F(h)=h^{-1}$ determine a functor $F:\mathsf{SA}^{\mathsf{op}}\to\mathsf{SeqSp}$.
Conversely, if $f:\langle X_{1},\mathcal{B}_{1}\rangle\to\langle X_{2},\mathcal{B}_{2}\rangle$
is a sequent semi-morphism, by Proposition \ref{seq maps to ssh},
it follows that the assignments $G(\langle X_{1},\mathcal{B}_{1}\rangle)=\langle\mathcal{B}_{1},\mathrm{C}_{\mathcal{B}_{1}}\rangle$
and $G(f)=f^{-1}$ also determine a functor $G:\mathsf{SeqSp}^{\mathsf{op}}\to\mathsf{SA}$.
Hence, by Proposition \ref{counit duality}
it follows that $F$ and $G$ constitute a categorical equivalence.
So, we have proved:

\begin{theorem}\label{dually category} The categories $\mathsf{SA}$
and $\mathsf{SeqSp}$ are dually equivalent. Moreover, this duality restricts to a dual equivalence between the categories $\mathsf{SAB}$ and $\mathsf{SeqSp}_{\mathrm{m}}$.
\end{theorem}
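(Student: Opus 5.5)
The first claim, the dual equivalence between $\mathsf{SA}$ and $\mathsf{SeqSp}$, will be assembled from the functors $F\colon\mathsf{SA}^{\mathsf{op}}\to\mathsf{SeqSp}$ and $G\colon\mathsf{SeqSp}^{\mathsf{op}}\to\mathsf{SA}$ described just before the statement.

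First I check functoriality. $F$ is well defined on objects by Lemma~\ref{lem: Algebras to spaces} and on morphisms by Proposition~\ref{ssh to seq maps}. Preservation of identities and composition is immediate, since $(g\circ h)^{-1}=h^{-1}\circ g^{-1}$ as maps on clans. The same argument works for $G$, using the observation after Definition~\ref{def:A-compingent-topological space} and Proposition~\ref{seq maps to ssh}.

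Next I exhibit the two natural isomorphisms. The unit is $\beta\colon \mathrm{Id}_{\mathsf{SA}}\Rightarrow GF$. It is componentwise an isomorphism in $\mathsf{SA}$ and natural by Lemma~\ref{unit duality}. I should check that $GF(\langle\mathbf{A},\mathrm{C}\rangle)$ is exactly $\langle\mathcal{B}_{\mathbf{A}},\mathrm{C}_{\vdash_{\mathcal{B}_{\mathbf{A}}}}\rangle$ and that $GF(h)=\widehat h^{\,*}$; both hold by definition. The counit is $\varepsilon\colon \mathrm{Id}_{\mathsf{SeqSp}}\Rightarrow FG$. It is an isomorphism and natural by Proposition~\ref{counit duality}, using the identity $\widehat f\circ\varepsilon_{X_1}=\varepsilon_{X_2}\circ f$. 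Because both composites are naturally isomorphic to the identities, $F$ and $G$ form a dual equivalence.

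For the restriction I must show that the functors map Boolean morphisms to Boolean morphisms. Let $h$ be a Boolean homomorphism. Since $\beta_{\mathbf{A}}$ and $\beta_{\mathbf{B}}$ are Boolean isomorphisms onto $\mathcal{B}_{\mathbf{A}}$ and $\mathcal{B}_{\mathbf{B}}$, Lemma~\ref{lem: maps alpa and beta}(5) applies. The diagram~\eqref{commutative diagram} then gives
\[
(h^{-1})^{*}=\beta_{\mathbf{B}}\circ h\circ\beta_{\mathbf{A}}^{-1},
\]
so $(h^{-1})^{*}$ is a Boolean homomorphism and $F(h)$ is a sequent morphism. Conversely, if $f$ is a sequent morphism, then $G(f)=f^{*}$ is by definition a Boolean homomorphism between $\mathcal{B}_2$ and $\mathcal{B}_1$, that is, a morphism in $\mathsf{SAB}$.

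The components $\beta_{\mathbf{A}}$ and $\varepsilon_X$ also lie in the subcategories. The map $\beta_{\mathbf{A}}$ is a Boolean isomorphism. For $\varepsilon_X$, the identity $\varepsilon_X^{-1}[\beta_{\mathcal{B}}(U)]=U$ shows $\varepsilon_X^{*}=\beta_{\mathcal{B}}^{-1}$, which is Boolean. The same holds for the inverse of $\varepsilon_X$, since its induced map is $\beta_{\mathcal{B}}$. Naturality is inherited from the larger categories, so the equivalence restricts.

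The main obstacle is a subtle point in the second half. The operations of $\mathcal{B}_{\mathbf{A}}$ are the regular-closed operations $\overline{\cap}$ and $-$, not the set-theoretic ones. So I must use that $\beta_{\mathbf{A}}$ preserves them, via Lemma~\ref{lem: maps alpa and beta}(3),(5), rather than arguing with plain intersections.
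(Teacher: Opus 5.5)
Your proposal is correct and follows the paper's proof. It uses the same functors $F$ and $G$, the same natural isomorphisms $\beta$ and $\varepsilon$ from Lemma~\ref{unit duality} and Proposition~\ref{counit duality}, and the commutative diagram~\eqref{commutative diagram} to transfer the Boolean structure. The only difference is in how the restriction step is packaged. You check that $F$ and $G$ send Boolean morphisms to Boolean morphisms, and that $\beta_{\mathbf{A}}$, $\varepsilon_X$ and their inverses lie in the subcategories; this needs only one direction of the paper's ``$h$ is Boolean iff $\widehat{h}^{\,*}$ is''. The paper instead argues fullness, faithfulness and essential surjectivity of the restricted functors.
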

\begin{proof}
For the moreover part, let $h:\langle\mathbf{A},\mathrm{C}\rangle\to\langle\mathbf{B},\mathrm{D}\rangle$
be a sequent semi-homomorphism and let $\widehat{h}^{\,*}:
\mathcal{B}_{\mathbf{A}}\longrightarrow
\mathcal{B}_{\mathbf{B}}$ be the induced map of the semi-morphism $\widehat{h}:X(\mathbf{B})\longrightarrow X(\mathbf{A})$ defined as $\widehat{h}(F)=h^{-1}[F]$. We show that, $h$ preserves negation if and
only if $\widehat{h}^{\,*}$ does. Consequently, $h$ is a Boolean homomorphism if and only if, $\widehat{h}^{\,*}$ does.

Indeed, suppose that for every $a\in A$, $h(\neg a)=\neg h(a)$. From the
commutativity of diagram (\ref{commutative diagram}), the hypothesis
and Lemma \ref{lem: maps alpa and beta} (3) it follows: 
\[
\begin{array}{rcl}
-\widehat{h}^{\,*}(\beta_{\mathbf{A}}(a)) & = & -\beta_{\mathbf{B}}(h(a))\\
 & = & \beta_{\mathbf{B}}(\neg h(a))\\
 & = & \beta_{\mathbf{B}}(h(\neg a))\\
 & = & \widehat{h}^{\,*}(\beta_{\mathbf{A}}(\neg a))\\
 & = & \widehat{h}^{\,*}(-\beta_{\mathbf{A}}(a)).
\end{array}
\]
Conversely, suppose that $-\widehat{h}^{\,*}(\beta_{\mathbf{A}}(a))=\widehat{h}^{\,*}(-\beta_{\mathbf{A}}(a))$,
for every $a\in A$. Then, again by the commutativity of diagram (\ref{commutative diagram})
and Lemma \ref{lem: maps alpa and beta} (1) it is immediate that $h(\neg a)=\neg h(a)$. 

The preceding argument shows that $F$ and $G$ restrict to functors between $\mathsf{SAB}^{\mathsf{op}}$ and $\mathsf{SeqSp}_{\mathrm{m}}$. These restricted functors are faithful, because $F$ and $G$ are, and they are full, since by the equivalence $h$ is a Boolean homomorphism if and only if $\widehat{h}^{\,*}$ is, so that a morphism of $\mathsf{SA}$ (resp.\ $\mathsf{SeqSp}$) lies in $\mathsf{SAB}$ (resp.\ $\mathsf{SeqSp}_{\mathrm{m}}$) exactly when its image does. Moreover, every object of $\mathsf{SA}$ belongs to $\mathsf{SAB}$ and every object of $\mathsf{SeqSp}$ belongs to $\mathsf{SeqSp}_{\mathrm{m}}$, and the natural isomorphisms $\beta_{\mathbf{A}}$ and $\varepsilon_X$ of Lemma~\ref{unit duality} and Proposition~\ref{counit duality} lie in the subcategories, because an isomorphism of Boolean algebras with respect to the order is a Boolean isomorphism. Hence the restricted functors are essentially surjective, and so they establish a dual equivalence between $\mathsf{SAB}$ and $\mathsf{SeqSp}_{\mathrm{m}}$.
\end{proof}

\section{Positive filters and sequent congruences}

In this section, we study sequent congruences and their connections with filters and the dual topological spaces. We first introduce positive filters and show that they coincide with the regular filters determined by a natural subordination associated with the sequent relation. We then establish an isomorphism between the lattice of sequent congruences and the lattice of regular filters, and characterize these congruences dually in terms of minimally decreasing closed subsets of the associated sequent space.

\begin{proposition}\label{closed-closed} Let $\langle\mathbf{A},\mathrm{C}\rangle$
be a sequent algebra. Then the following hold: 
\begin{enumerate}
\item If $Y$ is a closed set of $\langle X(\mathbf{A}),\tau_{\beta_{\mathbf{A}}[A]}\rangle$,
then \[F_{Y}=\{a\in A\colon Y\subseteq\beta_\mathbf{A}(a)\}\]
is a $\mathrm{C}$-closed set of $A$.
\item If $F$ is a $\mathrm{C}$-closed set of $A$, then 
\[
Y_{F}=\bigcap\{\beta_{\mathbf{A}}(a)\colon a\in F\}=\{G\in X(\mathbf{A})\colon F\subseteq G\}
\]
is a closed set of $\langle X(\mathbf{A}),\tau_{\beta_{\mathbf{A}}[A]}\rangle$. 
\end{enumerate}
Furthermore, the poset of closed sets of $\langle X(\mathbf{A}),\tau_{\beta_{\mathbf{A}}[A]}\rangle$
is dually isomorphic to the poset $\langle\mathcal{C}_{\mathrm{C}},\subseteq\rangle$.
\end{proposition}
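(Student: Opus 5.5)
The plan is to check (1) and (2) directly from Lemma \ref{lem: maps alpa and beta}, and then show that the maps $Y\mapsto F_Y$ and $F\mapsto Y_F$ are mutually inverse and order-reversing.

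For (1), let $Y$ be a closed subset of $X(\mathbf{A})$. By Proposition \ref{closed-closed}'s definition, $F_Y=\{a\in A\colon Y\subseteq\beta_{\mathbf{A}}(a)\}$. Because $\mathrm{C}$ is finitary, it suffices to take a finite $\{a_1,\ldots,a_n\}\subseteq F_Y$ and some $b\in\mathrm{C}(a_1,\ldots,a_n)$. Lemma \ref{lem: maps alpa and beta}(6) gives $\beta_{\mathbf{A}}(a_1)\cap\cdots\cap\beta_{\mathbf{A}}(a_n)\subseteq\beta_{\mathbf{A}}(b)$. Since $Y$ lies inside every $\beta_{\mathbf{A}}(a_i)$, we get $Y\subseteq\beta_{\mathbf{A}}(b)$, so $b\in F_Y$. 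The case $n=0$ is covered by $\beta_{\mathbf{A}}(1)=X(\mathbf{A})$. Notice that (1) does not use that $Y$ is closed.

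For (2), the set $Y_F=\bigcap\{\beta_{\mathbf{A}}(a)\colon a\in F\}$ is an intersection of basic closed sets, so it is closed. The equality with $\{G\in X(\mathbf{A})\colon F\subseteq G\}$ is immediate from the definition of $\beta_{\mathbf{A}}$.

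For the dual isomorphism, both assignments are clearly order-reversing. I will verify two identities.

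First, $F_{Y_F}=F$ for every $F\in\mathcal{C}_{\mathrm{C}}$. We have $F_{Y_F}=\{b\colon \bigcap_{a\in F}\beta_{\mathbf{A}}(a)\subseteq\beta_{\mathbf{A}}(b)\}$, and Lemma \ref{lem: maps alpa and beta}(6) with $X=F$ says this set equals $\mathrm{C}(F)=F$. The case $F=A$ also fits: there $Y_A=\emptyset$ and $F_\emptyset=A$.

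Second, $Y_{F_Y}=Y$ for every closed $Y$. The inclusion $Y\subseteq Y_{F_Y}$ is trivial. For the converse I will use that $\beta_{\mathbf{A}}[A]$ is a basis of closed sets and is closed under finite unions by Lemma \ref{lem: maps alpa and beta}(1). Hence every closed $Y$ is an intersection of basic sets $\beta_{\mathbf{A}}(a)$, namely exactly those with $Y\subseteq\beta_{\mathbf{A}}(a)$, that is, those with $a\in F_Y$. So $Y=\bigcap\{\beta_{\mathbf{A}}(a)\colon a\in F_Y\}=Y_{F_Y}$.

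The only step needing care is this basis point: I must confirm that "basis of closed sets" means every closed set is an intersection of basic ones, which holds because the basis is closed under finite unions and contains $\emptyset=\beta_{\mathbf{A}}(0)$. With both identities in hand, $Y\mapsto F_Y$ and $F\mapsto Y_F$ are mutually inverse order-reversing bijections, which gives the dual isomorphism.
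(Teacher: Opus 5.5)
Your proposal is correct and follows the paper's proof essentially step for step. Part (1) uses finitarity together with Lemma~\ref{lem: maps alpa and beta}(6), part (2) writes $Y_F$ as an intersection of basic closed sets, and $F_{Y_F}=F$ comes from the same lemma with $X=F$. For $Y_{F_Y}=Y$, both you and the paper rely on $\beta_{\mathbf{A}}[A]$ being closed under finite unions, hence a genuine base of closed sets; the paper phrases that last step as a contradiction with a separating $\beta_{\mathbf{A}}(a)$, while you state it directly, but it is the same argument.
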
 
\begin{proof}
(1) We only prove that $\mathrm{C}(F_{Y})\subseteq F_{Y}$,
since the converse inclusion follows from the fact that $\mathrm{C}$
is a closure operator on $A$. To this end, let $x\in\mathrm{C}(F_{Y})$.
As $\mathrm{C}$ is finitary, there exist $a_{1},\ldots,a_{n}\in F_{Y}$
such that $x\in\mathrm{C}(a_{1},\ldots,a_{n})$. By Lemma
\ref{lem: maps alpa and beta} (6),
\[
\beta_{\mathbf{A}}(a_{1})\cap\ldots\cap\beta_{\mathbf{A}}(a_{n})\subseteq\beta_{\mathbf{A}}(x).
\]
By assumption, $Y\subseteq\beta_{\mathbf{A}}(a_{i})$ for every $1\leq i\leq n$,
and thus from the above we get $Y\subseteq\beta_{\mathbf{A}}(x)$. Hence, $x\in F_{Y}$,
as required.

(2) This follows immediately since, for every $a\in A$, $\beta_{\mathbf{A}}(a)$
is a closed set by Lemma \ref{lem: maps alpa and beta}, and arbitrary
intersections of closed sets are closed.

For the ``furthermore'' part, we first note that, since $\beta_{\mathbf{A}}$
preserves finite joins, $\beta_{\mathbf{A}}(0)=\emptyset$ and $\beta_{\mathbf{A}}(1)=X(\mathbf{A})$
(because $1\in\mathrm{C}(\emptyset)$ belongs to every
closed set), the family $\beta_{\mathbf{A}}[A]$ is closed under finite unions.
Hence $\beta_{\mathbf{A}}[A]$ is a base (not only a subbase) of closed sets, and
every closed set of $X(\mathbf{A})$ is an intersection of sets of
the form $\beta_{\mathbf{A}}(a)$. Let $Y$ be a closed set of the dual sequent
space of $\langle\mathbf{A},\mathrm{C}\rangle$. It is
clear that $Y\subseteq Y_{F_{Y}}$. For the converse inclusion, suppose
that there is $P\in Y_{F_{Y}}$ such that $P\notin Y$. By the preceding
observation, there exists $a\in A$ with $Y\subseteq\beta_{\mathbf{A}}(a)$ and
$P\notin\beta_{\mathbf{A}}(a)$. Thus $a\in F_{Y}$. Since, by assumption, $F_{Y}\subseteq P$,
we get $a\in P$, which is a contradiction. Therefore $Y_{F_{Y}}=Y$.

On the other hand, let $F$ be a closed set of $\mathrm{C}$.
Observe that $a\in F_{Y_{F}}$ if and only if $\bigcap\{\beta_{\mathbf{A}}(b)\colon b\in F\}\subseteq\beta_{\mathbf{A}}(a)$.
By Lemma \ref{lem: maps alpa and beta} (6), the latter is equivalent
to $a\in\mathrm{C}(F)=F$. Hence $F_{Y_{F}}=F$. Therefore
the assignments $Y\mapsto F_{Y}$ and $F\mapsto Y_{F}$ are inverses
of each other, and both are clearly antitone. 
\end{proof}

Let $\left<X,\tau_{\mathcal{B}}\right>$ be a sequent space. We define
the relation $\sqsubseteq$ on $X$ as the dual of the specialization
order, that is, 
\[
x\sqsubseteq y\text{ iff }y\in\mathrm{cl}(\{x\}).
\]
Since $X$ is $T_{0}$, $\sqsubseteq$ is a partial order. For $x\in X$
we write $(x]=\{z\in X\colon z\sqsubseteq x\}$, and $\mathrm{min}X$
denotes the set of minimal elements of $\langle X,\sqsubseteq\rangle$.

Now let $\langle\mathbf{A},\mathrm{C}\rangle$ be a sequent
algebra and consider its dual sequent space $\langle X(\mathbf{A}),\tau_{\beta_{\mathbf{A}}[A]}\rangle$.
Since $\beta_{\mathbf{A}}[A]$ is a base of closed sets, for every $F\in X(\mathbf{A})$
we have 
\[
\mathrm{cl}(\{F\})=\bigcap\{\beta_{\mathbf{A}}(a)\colon F\in\beta_{\mathbf{A}}(a)\}=\{G\in X(\mathbf{A})\colon F\subseteq G\}.
\]
Hence $F\sqsubseteq G$ iff $F\subseteq G$, and there is no ambiguity
in writing $\langle X(\mathbf{A}),\sqsubseteq\rangle$ or $\langle X(\mathbf{A}),\subseteq\rangle$. 

\begin{proposition}\label{maximals are ultrafilters} Let $\langle\mathbf{A},\mathrm{C}\rangle$
be a sequent algebra. Then 
\[
\mathrm{min}X(\mathbf{A})=\Ul(\mathbf{A}).
\]
\end{proposition}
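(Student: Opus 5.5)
The plan is to prove both inclusions directly, using the identification of $\sqsubseteq$ with $\subseteq$ on $X(\mathbf{A})$ established just before the statement. By Lemma \ref{lem: X(A) and ultrafilters} (1), every ultrafilter is a clan, so $\Ul(\mathbf{A})\subseteq X(\mathbf{A})$. Thus the claim reduces to two facts: every ultrafilter is $\subseteq$-minimal in $X(\mathbf{A})$, and every $\subseteq$-minimal clan is an ultrafilter.

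For the inclusion $\Ul(\mathbf{A})\subseteq\mathrm{min}X(\mathbf{A})$, let $U\in\Ul(\mathbf{A})$ and $G\in X(\mathbf{A})$ with $G\subseteq U$; we must show $G=U$. Take $a\in U$ and suppose $a\notin G$. Since $1\in G$ (by $\mathbf{One}$, $1\in\mathrm{C}(\emptyset)\subseteq G$) and $1=a\vee\neg a$, the join-prime property of Lemma \ref{lem:charcat irreducible closed} (1) gives $\neg a\in G\subseteq U$. Then $a,\neg a\in U$, contradicting that $U$ is a proper filter. Hence $U\subseteq G$, so $G=U$ and $U$ is minimal.

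For the reverse inclusion, let $F\in\mathrm{min}X(\mathbf{A})$. The filter $\{1\}=\mathrm{Fg}(\emptyset)$ is contained in $F$, so by Lemma \ref{lem: X(A) and ultrafilters} (2) there is $P\in\Ul(\mathbf{A})$ with $\{1\}\subseteq P\subseteq F$. (Alternatively, pick any $a\in F$ and apply Lemma \ref{lem:charcat irreducible closed} (2), using that $F$ is nonempty since $1\in F$.) Since $P\in X(\mathbf{A})$ and $F$ is minimal, $F=P$, so $F$ is an ultrafilter.

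There is no real obstacle here. The only points requiring care are that clans are nonempty (because $1\in\mathrm{C}(\emptyset)$) and that the order on $X(\mathbf{A})$ really is inclusion, which the paper has already verified just before the statement.
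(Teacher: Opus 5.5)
Your proof is correct and follows the paper's overall plan: the reverse inclusion is argued just as in the paper (an ultrafilter lying below a minimal clan must equal it), and you even name the paper's exact tool, Lemma~\ref{lem:charcat irreducible closed}~(2), as your alternative. For the minimality of an ultrafilter $U$, the paper instead finds an ultrafilter $Q\subseteq G\subseteq U$ via Lemma~\ref{lem:charcat irreducible closed}~(2) and gets $Q=G=U$ from the maximality of $Q$; your use of join-primeness of $G$ on $1=a\vee\neg a$ is a slightly more elementary variant that avoids a second appeal to the Ultrafilter Theorem.
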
 
\begin{proof}
Let $P\in\Ul(\mathbf{A})$. By Lemma \ref{lem: X(A) and ultrafilters} (1),
$P\in X(\mathbf{A})$. Suppose that $F\in X(\mathbf{A})$ and $F\subseteq P$.
By \textbf{One} $1\in F$, thus by Lemma \ref{lem:charcat irreducible closed} (2)
there is $Q\in\Ul(\mathbf{A})$ such that $Q\subseteq F$. Then $Q\subseteq F\subseteq P$,
and the maximality of $Q$ yields $Q=F=P$. Thus $P\in\mathrm{min}X(\mathbf{A})$.
Conversely, let $F\in\mathrm{min}X(\mathbf{A})$. Again by Lemma \ref{lem:charcat irreducible closed} (2),
there exists $Q\in\Ul(\mathbf{A})$ such that $Q\subseteq F$. Since
$Q\in X(\mathbf{A})$, the minimality of $F$ gives $Q=F$, so $F\in\Ul(\mathbf{A})$. 
\end{proof}

\begin{definition}\label{mdecrasing closed set} Let $\left<X,\tau_{\mathcal{B}}\right>$
be a sequent space. We say that a closed set $Y$ of $X$ is \emph{minimally
decreasing} (m-decreasing, for short) if $\mathrm{min}X\cap(x]\subseteq Y$,
for every $x\in Y$, where $(x]$ and $\mathrm{min}X$
are taken with respect to $\sqsubseteq$. \end{definition}

The set of all m-decreasing closed sets of $X$ will be denoted by
$\mathrm{MD}(X)$.

\begin{definition}\label{positive filter} Let $\langle\mathbf{A},\mathrm{C}\rangle$
be a sequent algebra. A filter $F$ of $\mathbf{A}$ is \emph{positive}
if for every $a\in F$ and $G\in X(\mathbf{A})$, if $F\subseteq G$,
then $\neg a\notin G$. \end{definition}

The set of all positive filters of $\mathbf{A}$ will be denoted by
$\mathrm{PF}(\mathbf{A})$. Recall that, by $\mathbf{Fi}$, every
filter $F$ of $\mathbf{A}$ is closed:
$\mathrm{C}(F)\subseteq\mathrm{Fg}(F)=F$
(Lemma \ref{lem: X(A) and ultrafilters}(1)). The converse is false
in general: a closed set need not be a filter. Indeed, consider the
four-element Boolean algebra
$\mathbf{A}=\{0,a,b,1\}$, where $a$ and $b$ are distinct atoms,
and equip it with the closure operator
\[
\mathrm{C}_{\min}(Z)=
\begin{cases}
\{1\}, & Z=\emptyset,\\
\uparrow Z, & Z\neq\emptyset.
\end{cases}
\]
In Proposition \ref{fibre-complete}, we will show that $\langle \mathbf{A}, \mathrm{C}_{\min}\rangle$ is a sequent algebra. Then $Z=\{a,b,1\}$ is closed, since
$\mathrm{C}_{\min}(Z)=\uparrow Z=Z$, but it is not a filter,
as $a,b\in Z$ while $a\wedge b=0\notin Z$.

\begin{theorem}\label{postive to mdecreceasing} Let $\langle\mathbf{A},\mathrm{C}\rangle$
be a sequent algebra. Then the following hold: 
\begin{enumerate}
\item If $F$ is a positive filter of $\mathbf{A}$, then $Y_{F}$ is an
m-decreasing closed set of $\langle X(\mathbf{A}),\tau_{\mathcal{B}_{\mathbf{A}}}\rangle$. 
\item If $Y$ is an m-decreasing closed set of $\langle X(\mathbf{A}),\tau_{\mathcal{B}_{\mathbf{A}}}\rangle$,
then $F_{Y}$ is a positive filter of $\mathbf{A}$. 
\end{enumerate}
Furthermore, the posets $\langle\mathrm{MD}(X(\mathbf{A})),\subseteq\rangle$
and $\langle\mathrm{PF}(\mathbf{A}),\subseteq\rangle$ are dually
isomorphic. \end{theorem}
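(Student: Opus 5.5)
The plan is to reduce everything to the dual isomorphism $F\mapsto Y_F$, $Y\mapsto F_Y$ between $\mathrm{C}$-closed sets and closed subsets of $X(\mathbf{A})$ from Proposition \ref{closed-closed}. Two further ingredients do the work. The first is Proposition \ref{maximals are ultrafilters}: the minimal points of $X(\mathbf{A})$ are exactly the ultrafilters. The second is Lemma \ref{lem:charcat irreducible closed}(2): every element of a clan lies in some ultrafilter contained in that clan. Since $\sqsubseteq$ coincides with $\subseteq$ on $X(\mathbf{A})$, the m-decreasing condition on a closed $Y$ reads: if $G\in Y$ and $U\in\Ul(\mathbf{A})$ with $U\subseteq G$, then $U\in Y$.

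For (1), I take a positive filter $F$. Since $F$ is a filter, it is $\mathrm{C}$-closed by Lemma \ref{lem: X(A) and ultrafilters}(1), so $Y_F$ is closed by Proposition \ref{closed-closed}(2). For the m-decreasing property, let $G\in Y_F$ (so $F\subseteq G$) and let $U$ be an ultrafilter with $U\subseteq G$. If some $a\in F$ were not in $U$, then $\neg a\in U\subseteq G$. This contradicts positivity of $F$, since $F\subseteq G$. Hence $F\subseteq U$, that is, $U\in Y_F$.

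For (2), let $Y$ be m-decreasing and closed. By Proposition \ref{closed-closed}(1), $F_Y$ is $\mathrm{C}$-closed, hence an upset by $\mathbf{Up}$; it remains to show closure under meets and positivity.

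Closure under meets: take $a,b\in F_Y$ and $G\in Y$. Since $1\in G$, Lemma \ref{lem:charcat irreducible closed}(2) gives an ultrafilter $U\subseteq G$. By the m-decreasing property $U\in Y$, so $a,b\in U$, hence $a\wedge b\in U\subseteq G$. Thus $a\wedge b\in F_Y$, and $F_Y$ is a filter. If $Y=\emptyset$, then $F_Y=A$, and both the filter property and positivity hold trivially.

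Positivity: let $a\in F_Y$ and let $G\in X(\mathbf{A})$ with $F_Y\subseteq G$, and suppose $\neg a\in G$.
\begin{enumerate}
\item Since $G\in Y_{F_Y}$ and $Y_{F_Y}=Y$ by Proposition \ref{closed-closed}, we get $G\in Y$.
\item Lemma \ref{lem:charcat irreducible closed}(2) gives an ultrafilter $U$ with $\neg a\in U\subseteq G$, and the m-decreasing property gives $U\in Y$.
\item Hence $Y\subseteq\beta_{\mathbf{A}}(a)$ yields $a\in U$, contradicting $\neg a\in U$.
\end{enumerate}

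For the ``furthermore'' part, I restrict the antitone bijection of Proposition \ref{closed-closed}. Positive filters are $\mathrm{C}$-closed, and m-decreasing sets are closed. By (1) and (2), the maps $F\mapsto Y_F$ and $Y\mapsto F_Y$ send $\mathrm{PF}(\mathbf{A})$ into $\mathrm{MD}(X(\mathbf{A}))$ and back. They remain mutually inverse and order-reversing, so they give the required dual isomorphism.

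The only delicate step is the meet-closure of $F_Y$, because clans are not filters in general. This is exactly where m-decreasingness is needed, combined with the existence of an ultrafilter below every clan.
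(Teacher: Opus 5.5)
Your proposal is correct and follows the paper's proof essentially step by step. Closedness of $Y_F$ and $F_Y$, and the final dual isomorphism, come from Proposition~\ref{closed-closed}; minimal clans are identified with ultrafilters via Proposition~\ref{maximals are ultrafilters}; and, exactly as in the paper, both the meet-closure and the positivity of $F_Y$ are obtained by taking an ultrafilter below a clan (Lemma~\ref{lem:charcat irreducible closed}(2)) and using m-decreasingness together with $Y_{F_Y}=Y$.
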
 
\begin{proof}
(1) Let $F$ be a positive filter. Since $F$ is closed, $Y_{F}$
is closed by Proposition \ref{closed-closed}. Let $G\in Y_{F}$ and
let $U\in\mathrm{min}X(\mathbf{A})\cap(G]$. By Proposition \ref{maximals are ultrafilters},
$U$ is an ultrafilter and $U\subseteq G$. We show that $F\subseteq U$.
Suppose, to the contrary, that $a\in F\setminus U$. Since $U$ is
an ultrafilter, $\neg a\in U\subseteq G$. But $F\subseteq G$, contradicting
the positivity of $F$. Hence $U\in Y_{F}$, and therefore $Y_{F}$
is m-decreasing.

(2) Let $Y$ be an m-decreasing closed subset of $\langle X(\mathbf{A}),\tau_{\mathcal{B}_{\mathbf{A}}}\rangle$.
By Proposition \ref{closed-closed}, $F_{Y}$ is closed with respect
to $\mathrm{C}$. Since a closed set need not be a filter,
we verify directly that $F_{Y}$ is a filter. Clearly $1\in F_{Y}$,
and $F_{Y}$ is an upset. Let $a,b\in F_{Y}$ and $P\in Y$. By Lemma
\ref{lem:charcat irreducible closed} (2), there exists an ultrafilter
$U$ of $\mathbf{A}$ such that $U\subseteq P$. By Proposition \ref{maximals are ultrafilters},
$U\in\mathrm{min}X(\mathbf{A})\cap(P]$, and since $Y$ is m-decreasing,
$U\in Y$. Thus $a,b\in U$, so $a\wedge b\in U\subseteq P$. As $P\in Y$
was arbitrary, $Y\subseteq\beta(a\wedge b)$, and hence $a\wedge b\in F_{Y}$.
Therefore $F_{Y}$ is a filter.

It remains to prove positivity. Suppose that $a\in F_{Y}$, $G\in X(\mathbf{A})$,
$F_{Y}\subseteq G$, and $\neg a\in G$. By Lemma~\ref{lem:charcat irreducible closed} (2),
there is an ultrafilter $U$ of $\mathbf{A}$ with $\neg a\in U\subseteq G$. Since
$G\in Y_{F_{Y}}=Y$ by Proposition~\ref{closed-closed}, and $Y$
is m-decreasing, we have $U\in Y$. But $a\in F_{Y}$ means $Y\subseteq\beta(a)$,
so $a\in U$, contradicting $\neg a\in U$. Hence $F_{Y}$ is positive.

The final statement follows from (1), (2), and Proposition~\ref{closed-closed},
since every positive filter is closed, so that $F_{Y_{F}}=F$ for
each $F\in\mathrm{PF}(\mathbf{A})$. 
\end{proof}

\begin{proposition}\label{postive simpler} Let $\langle\mathbf{A},\mathrm{C}\rangle$
be a sequent algebra and let $F$ be a filter of $\mathbf{A}$. Then
$F$ is positive if and only if, for every $a\in F$, 
\[
\mathrm{C}\bigl(F\cup\{\neg a\}\bigr)=A.
\]
\end{proposition}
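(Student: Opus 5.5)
The plan is to prove both directions by contraposition, with Proposition~\ref{clans for a basis}(2) (equivalently Corollary~\ref{cor: closed is inter of irreducible}) as the bridge between ``$\mathrm{C}(Z)\neq A$'' and ``some clan contains $Z$''. The key observation is that, for any $Z\subseteq A$, $\mathrm{C}(Z)\neq A$ holds if and only if there is $G\in X(\mathbf{A})$ with $Z\subseteq G$.

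For the forward implication, if $Z\subseteq G$ with $G$ a clan, then $\mathrm{C}(Z)\subseteq G\neq A$, since clans are proper. Conversely, suppose $\mathrm{C}(Z)\neq A$. Then $\mathrm{C}(Z)$ is a proper closed set, so by Corollary~\ref{cor: closed is inter of irreducible} it is contained in some $G\in X(\mathbf{A})$, and hence $Z\subseteq G$.

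Now fix a filter $F$ and $a\in F$, and apply the observation with $Z=F\cup\{\neg a\}$. The condition $\mathrm{C}(F\cup\{\neg a\})\neq A$ holds exactly when there is a clan $G$ with $F\subseteq G$ and $\neg a\in G$. This is precisely the negation of the positivity clause of Definition~\ref{positive filter} for this particular $a$.

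Quantifying over all $a\in F$ gives the equivalence: $F$ is positive if and only if no such $G$ exists for any $a\in F$, if and only if $\mathrm{C}(F\cup\{\neg a\})=A$ for every $a\in F$.

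There is no real obstacle here. The only point needing care is the use of Corollary~\ref{cor: closed is inter of irreducible}: it requires a \emph{proper} closed set, and that is exactly what $\mathrm{C}(Z)\neq A$ supplies. Note also that the hypothesis that $F$ is a filter plays no role in the argument beyond matching the definition of positivity.
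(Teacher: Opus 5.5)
Your proof is correct and follows essentially the same route as the paper: one direction uses Corollary~\ref{cor: closed is inter of irreducible} to produce a clan containing $F\cup\{\neg a\}$ whenever $\mathrm{C}(F\cup\{\neg a\})\neq A$, and the other uses that a clan $G\supseteq Z$ satisfies $\mathrm{C}(Z)\subseteq G\neq A$. Packaging this as the general equivalence ``$\mathrm{C}(Z)\neq A$ iff some clan contains $Z$'' is just a cleaner presentation of the paper's two contradiction arguments.
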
 
\begin{proof}
Suppose first that $F$ is positive and that, for some $a\in F$,
$\mathrm{C}(F\cup\{\neg a\})\neq A$. By Corollary \ref{cor: closed is inter of irreducible},
there exists a clan $G\in X(\mathbf{A})$ containing $F\cup\{\neg a\}$.
This contradicts the positivity of $F$.

Conversely, assume that $\mathrm{C}(F\cup\{\neg a\})=A$
for every $a\in F$. If $F$ were not positive, there would exist
$a\in F$ and $G\in X(\mathbf{A})$ such that $F\subseteq G$ and
$\neg a\in G$. Since $G$ is closed, 
\[
A=\mathrm{C}(F\cup\{\neg a\})\subseteq G,
\]
contradicting the fact that a clan is proper. 
\end{proof}

We now introduce the notion of sequent congruence. A sequent congruence
is a Boolean congruence that is compatible with the closure operator,
or equivalently with the sequent relation of a sequent algebra. This
notion extends the concept of $\prec$-congruence, also known as quasi-modal
congruence, introduced and studied in \cite{Cel:q-m} and \cite{Celani-SubdirectlyQUasi}. We formalize this concept in Definition \ref{def:sequent congruences}, but first we require to establish some technical facts.
\\

Recall that Proposition \ref{prop: charact closure} establishes an equivalence between S-algebras (Definition \ref{def: S-algebras}) and sequent algebras. Henceforth, for the rest of this section we work with the presentation of sequent algebras as S-algebras and write them as $\langle A,\vdash\rangle$.

\begin{lemma} Let $\left<A,\vdash\right>$ be a sequent algebra.
For each finite subset $\left\{ a,c\right\} \cup B\subseteq A$, the
set 
\[
I\left(a,B,c\right)=\left\{ x\in A:a,x,B\vdash c\right\} 
\]
is an ideal of $A$. \end{lemma}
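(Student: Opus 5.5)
To show that $I(a,B,c)$ is an ideal, I will verify the three ideal conditions directly from the $S$-algebra axioms of Definition \ref{def: S-algebras} and the derived rules of Lemma \ref{props SequentALg}. Throughout, the antecedent $a,x,B$ is read as the finite set $\{a,x\}\cup B$, so the order of the elements is irrelevant.

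The first step is to show $0\in I(a,B,c)$, that is, $a,0,B\vdash c$. Since $0\leq c$, condition (4) gives $0\vdash c$. Repeated application of the weakening condition (2) then adds $a$ and the elements of $B$, yielding $a,0,B\vdash c$.

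The second step is downward closure. Suppose $x\in I(a,B,c)$ and $y\leq x$. Write the sequent $a,x,B\vdash c$ as $X,x\vdash c$ with $X=\{a\}\cup B$. Lemma \ref{props SequentALg}(3) lets us replace $x$ by the smaller element $y$, giving $X,y\vdash c$, so $y\in I(a,B,c)$.

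The third step is closure under binary joins. If $x,y\in I(a,B,c)$, then $X,x\vdash c$ and $X,y\vdash c$ with the same $X=\{a\}\cup B$. Condition (6) of Definition \ref{def: S-algebras} gives $X,x\vee y\vdash c$, so $x\vee y\in I(a,B,c)$.

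The only delicate point is the degenerate case in which $x$ coincides with $a$ or with an element of $B$. In that case the antecedent set shrinks, and the substitutions in the second and third steps must still be legitimate. Weakening handles this: if, for instance, $y=a$ in the second step, then $X,y\vdash c$ is still a valid sequent on the same finite set. So no case actually fails. I expect this set-theoretic bookkeeping to be the main, though minor, obstacle; the rest is a direct application of the axioms.
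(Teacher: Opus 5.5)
Your proof is correct and follows the paper's argument step for step: $0\vdash c$ from condition (4) plus weakening, downward closure via Lemma \ref{props SequentALg}(3), and closure under joins via condition (6). The degenerate-case discussion is unnecessary. Antecedents are finite sets and the rules are stated for sets, so they apply verbatim even when $x$ or $y$ already lies in $\{a\}\cup B$.
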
 
\begin{proof}
Since $0\vdash c$ by condition (4) of the definition of $S$-algebra,
condition (2) gives $a,0,B\vdash c$, so $0\in I(a,B,c)$. Let $x,y\in A$
such that $x\leq y$ and $y\in I\left(a,B,c\right)$. By Lemma \ref{props SequentALg}(3),
$a,x,B\vdash c$. If $x,y\in I\left(a,B,c\right)$, then $a,x,B\vdash c$
and $a,y,B\vdash c$. By condition (6) of the definition of $S$-algebra,
$a,x\vee y,B\vdash c$. Thus, $I\left(a,B,c\right)$ is an ideal of
$A$. 
\end{proof}

\begin{remark} Let $\left<A,\vdash\right>$ be a sequent algebra.
If $B=\emptyset$ and $c=0$, then 
\[
A\setminus I(a,\emptyset,0)=\left\{ x\in A:a,x\nvdash0\right\} =\textnormal{\ensuremath{\left\{  x\in A:\left(a,x\right)\in\mathrm{C}_{\vdash}\right\} } ,}
\]
where $\mathrm{C}_{\vdash}$is the contact relation associated with
the sequent relation $\vdash$ established in Remark \ref{rem: sequent contact}. 
\end{remark}

\begin{definition}\label{def:sequent congruences}Let $\left<A,\vdash\right>$
be a sequent algebra. Let $\theta$ be a Boolean congruence. We say
that $\theta$ is a sequent congruence if 
\begin{enumerate}
\item[$\mathrm{(SCon)}$] For each $\left(a,b\right)\in\theta$ and for each $\left\{ c\right\} \cup B\subseteq_{f}A$,
if $d\in I(\neg a,B,c)$, then there exists $e\in I(\neg b,B,c)$
such that $(d,e)\in\theta$. 
\end{enumerate}
\end{definition}

The condition $\mathrm{(SCon)}$ together with the symmetry of $\theta$
implies that if $\left(a,b\right)\in\theta$ and $e\in I(\neg b,B,c)$,
then there exists $d\in I(\neg a,B,c)$ such that $(d,e)\in\theta$.

\begin{remark}
For each finite subset $\left\{ c\right\} \cup B$
of $A$, we denote by $\prec^{c}_{B}$ the binary relation on $A$
defined by 
\[
a\prec^{c}_{B}b\text{ iff }a,\neg b,B\vdash c.
\]
If $B=\emptyset$, we write $a\prec^{c}b$ instead of $a\prec^{c}_{\emptyset}b$.
Finally, we define 
\[
\prec\coloneqq\bigcap_{\{c\}\cup B\subseteq_{f}A}\prec^{c}_{B}.
\]
\end{remark}

\begin{lemma}\label{lem: prec simple} Let $\left<A,\vdash\right>$
be a sequent algebra. For all $a,b\in A$, 
\[
a\prec b\text{ iff }a\prec^{0}b\text{ iff }a,\neg b\vdash0.
\]
\end{lemma}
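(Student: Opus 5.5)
The second equivalence, $a\prec^{0}b$ iff $a,\neg b\vdash 0$, is just the definition of $\prec^{c}_{B}$ with $B=\emptyset$ and $c=0$. So the work is to show $a\prec b$ iff $a\prec^{0}b$.

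The forward direction is immediate. By definition, $\prec$ is the intersection of all the relations $\prec^{c}_{B}$. One of these is $\prec^{0}_{\emptyset}=\prec^{0}$, since $\{0\}\cup\emptyset$ is a finite subset of $A$. Hence $a\prec b$ implies $a\prec^{0}b$.

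For the converse, assume $a,\neg b\vdash 0$, and fix an arbitrary finite set $\{c\}\cup B\subseteq A$. We must show $a,\neg b,B\vdash c$, and we do it in two steps.

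First, apply condition (2) of Definition \ref{def: S-algebras} once for each element of $B$. This weakens the hypothesis to $a,\neg b,B\vdash 0$.

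Second, pass from $0$ to $c$ on the right. Since $0\leq c$, Lemma \ref{props SequentALg}(4) gives $a,\neg b,B\vdash c$. Alternatively, we can argue with cut: condition (4) gives $0\vdash c$; condition (2) then weakens this to $a,\neg b,B,0\vdash c$; and condition (3), applied with $X=\{a,\neg b\}\cup B$, cuts out $0$ to yield $a,\neg b,B\vdash c$.

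Therefore $a\prec^{c}_{B}b$ for every finite $\{c\}\cup B$, that is, $a\prec b$.

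The only point needing care is the bookkeeping with finite sets. If $0$ or $c$ already lies in $\{a,\neg b\}\cup B$, the set-based notation absorbs the duplicate, and condition (2) still applies because $X\cup\{x\}=X$ in that case. So I expect no real obstacle: the lemma is an instance of weakening plus monotonicity in the succedent.
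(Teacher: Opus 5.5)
Your proof is correct and follows the paper's argument: the forward direction takes the instance $\prec^{0}_{\emptyset}$ of the intersection, and the converse weakens by condition (2) to $a,\neg b,B\vdash 0$ and then applies Lemma \ref{props SequentALg}(4) with $0\leq c$. Your alternative cut-based derivation of the last step is also valid, but it is not needed.
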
 
\begin{proof}
If $a\prec b$, then in particular $a\prec^{0}_{\emptyset}b$. Conversely,
suppose $a,\neg b\vdash0$ and let $\left\{ c\right\} \cup B\subseteq_{f}A$.
By condition (2) of the definition of $S$-algebra, $a,\neg b,B\vdash0$,
and since $0\leq c$, Lemma \ref{props SequentALg} (4) gives $a,\neg b,B\vdash c$.
Thus $a\prec^{c}_{B}b$. 
\end{proof}

Let $\mathbf{A}$ be a Boolean algebra and let $\prec$ be a binary
relation on $A$. Following \cite{C2026}, we say that $\prec$ is a
\emph{subordination} on $A$ if it satisfies the following conditions:
\begin{enumerate}
    \item[\textnormal{(S1)}] $0 \prec 0$ and $1 \prec 1$.
    \item[\textnormal{(S2)}] $a \prec b$ and $a \prec c$ imply
    $a \prec b \wedge c$.
    \item[\textnormal{(S3)}] $a \prec c$ and $b \prec c$ imply
    $a \vee b \prec c$.
    \item[\textnormal{(S4)}] $a \leq b \prec c \leq d$ imply
    $a \prec d$.
\end{enumerate}
\begin{lemma}\label{lem: prec subordination}Let $\mathbf{A}$ be
a sequent algebra. Then $\prec$ is a subordination on $A$. \end{lemma}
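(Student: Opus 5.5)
By Lemma \ref{lem: prec simple}, it suffices throughout to work with the single condition $a\prec b$ iff $a,\neg b\vdash 0$. Each of (S1)--(S4) then reduces to a short manipulation with the $S$-algebra axioms of Definition \ref{def: S-algebras} and Lemma \ref{props SequentALg}. Equivalently, one can use the closure-operator reading $a\prec b$ iff $0\in\mathrm{C}(a,\neg b)$ together with Remark \ref{additivity}.

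For (S1): $0\prec 0$ means $0,1\vdash 0$, which follows from $0\vdash 0$ (axiom (1)) and weakening (axiom (2)). $1\prec 1$ means $1,0\vdash 0$, obtained the same way.

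For (S4), assume $a\le b$, $c\le d$ and $b,\neg c\vdash 0$. Since $a\le b$, Lemma \ref{props SequentALg}(3) lets us replace $b$ by $a$ on the left, giving $a,\neg c\vdash 0$. Since $\neg d\le\neg c$, applying the same item again gives $a,\neg d\vdash 0$.

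For (S3), assume $a,\neg c\vdash 0$ and $b,\neg c\vdash 0$. Axiom (6) with $X=\{\neg c\}$ gives $a\vee b,\neg c\vdash 0$ directly.

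For (S2), assume $a,\neg b\vdash 0$ and $a,\neg c\vdash 0$. Since $\neg(b\wedge c)=\neg b\vee\neg c$, axiom (6), now with $X=\{a\}$ and the disjunction on $\neg b,\neg c$, yields $a,\neg b\vee\neg c\vdash 0$, i.e.\ $a\prec b\wedge c$.

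The only point needing care is that axiom (6) and Lemma \ref{props SequentALg}(3) are stated with the varying element in a specific position of a finite set. Since sequents are finite \emph{sets}, order is irrelevant, so this is harmless. There is no real obstacle here; the one thing to check is the De Morgan step in (S2), and that $X$ may be a singleton.
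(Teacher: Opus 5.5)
Your proof is correct and matches the paper's argument essentially step for step. You reduce to $a,\neg b\vdash 0$ via Lemma~\ref{lem: prec simple}, get (S1) from axioms (1) and (2), and get (S2) and (S3) from axiom (6), using De Morgan for (S2). You then get (S4) from two applications of Lemma~\ref{props SequentALg}(3).
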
 
\begin{proof}
We use Lemma \ref{lem: prec simple}. Since $0\vdash0$, condition
(2) gives $0,1\vdash0$ and $1,0\vdash0$, that is, $0\prec0$ and
$1\prec1$. If $a\prec b$ and $a\prec c$, then $a,\neg b\vdash0$
and $a,\neg c\vdash0$, so by condition (6), $a,\neg b\vee\neg c\vdash0$,
i.e., $a\prec b\wedge c$. Similarly, if $a\prec c$ and $b\prec c$,
condition (6) gives $a\vee b,\neg c\vdash0$, i.e., $a\vee b\prec c$.
Finally, if $a\leq b\prec c\leq d$, then $\neg d\leq\neg c$ and
two applications of Lemma \ref{props SequentALg} (3) give $a,\neg d\vdash0$,
i.e., $a\prec d$. 
\end{proof}

\begin{proposition}\label{prop: positive iff regular} Let $\left<A,\vdash\right>$
be a sequent algebra and let $F$ be a filter of $A$. Then $F$ is
positive if and only if for each $a\in F$ there exists $f\in F$
such that $f\prec a$. \end{proposition}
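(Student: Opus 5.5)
The plan is to reduce positivity to the criterion of Proposition \ref{postive simpler}, namely $\mathrm{C}(F\cup\{\neg a\})=A$ for every $a\in F$, and then to translate this criterion into the subordination $\prec$ by means of Lemma \ref{lem: prec simple}, which says that $f\prec a$ holds exactly when $f,\neg a\vdash 0$.

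For the direction from right to left, assume that for each $a\in F$ there is some $f\in F$ with $f\prec a$. Then $f,\neg a\vdash 0$, so $0\in\mathrm{C}(\{f,\neg a\})\subseteq\mathrm{C}(F\cup\{\neg a\})$. By $\mathbf{Up}$, a closed set containing $0$ is all of $A$, and so $\mathrm{C}(F\cup\{\neg a\})=A$. Proposition \ref{postive simpler} then gives that $F$ is positive.

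For the direction from left to right, let $F$ be positive and $a\in F$. By Proposition \ref{postive simpler} we have $0\in\mathrm{C}(F\cup\{\neg a\})$. Since $\mathrm{C}$ is finitary, there are $f_{1},\ldots,f_{n}\in F$ such that $f_{1},\ldots,f_{n},\neg a\vdash 0$; if $\neg a$ does not occur in this finite set, we add it using condition (2) of Definition \ref{def: S-algebras}. Put $f=f_{1}\wedge\cdots\wedge f_{n}$, with $f=1$ when $n=0$. Then $f\in F$ because $F$ is a filter. Repeated use of Lemma \ref{props SequentALg}(1) collapses the conjunction and gives $f,\neg a\vdash 0$, that is, $f\prec a$ by Lemma \ref{lem: prec simple}.

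The main obstacle, though a mild one, is the bookkeeping in this finitary reduction. One needs to handle the degenerate case $n=0$: there $\neg a\vdash 0$, and by condition (2) this gives $1,\neg a\vdash 0$, so $f=1$ works. One should also make sure that merging the $f_i$ into a single meet is legitimate in the set-based sequent notation, which Lemma \ref{props SequentALg}(1) guarantees. Everything else follows directly from the earlier results.
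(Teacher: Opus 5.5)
Your proposal is correct and follows essentially the same route as the paper. You reduce to Proposition \ref{postive simpler}, use $\mathbf{Up}$ to equate $\mathrm{C}(F\cup\{\neg a\})=A$ with $0\in\mathrm{C}(F\cup\{\neg a\})$, use finitarity and Lemma \ref{props SequentALg}(1) to collapse the premises from $F$ into a single meet $f\in F$, and conclude with Lemma \ref{lem: prec simple}. The only difference is cosmetic: you treat $n=0$ separately with $f=1$, whereas the paper uses condition (2) to add an element of $F$ to the premises so that $n\geq 1$.
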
 
\begin{proof}
Let $a\in F$. Since closed sets are upsets (by $\mathbf{Up}$), $\mathrm{C}(F\cup\{\neg a\})=A$
iff $0\in\mathrm{C}(F\cup\{\neg a\})$. As $\mathrm{C}$ is finitary
and, by condition (2), we may always add $\neg a$ and an element
of $F$ to the premises. This holds iff there exist $f_{1},\ldots,f_{n}\in F$,
with $n\geq1$, such that $f_{1},\ldots,f_{n},\neg a\vdash0$. By
Lemma \ref{props SequentALg} (1) and (3), this is equivalent to $f,\neg a\vdash0$
for $f=f_{1}\wedge\ldots\wedge f_{n}$, which belongs to $F$ because
$F$ is a filter. By Lemma~\ref{lem: prec simple}, $f,\neg a\vdash0$
means $f\prec a$. The result now follows from Proposition \ref{postive simpler}. 
\end{proof}

At the light of Proposition \ref{prop: positive iff regular}, positive
filters can be described purely in terms of the subordination $\prec$.
We shall also call them \emph{regular} filters \cite{Celani-SubdirectlyQUasi}. 

\begin{definition}\label{def:regular filter}Let $\left<A,\vdash\right>$
be a sequent algebra. A filter $F$ of $A$ is \emph{regular} if for
each $f\in F$ there exists $z\in F$ such that $z\prec f$,
i.e., $z,\neg f\vdash0$. By Proposition \ref{prop: positive iff regular},
a filter is regular if and only if it is positive. \end{definition}

\begin{theorem}\label{thm: congruences regular}Let $\left<A,\vdash\right>$
be a sequent algebra. Then the following hold: 
\begin{enumerate}
\item If $F$ is a regular filter of $\mathbf{A}$, then 
\[\theta(F)=\left\{ \left(a,b\right)\in A^{2}:\exists f\in F\,\left(a\wedge f=b\wedge f\right)\right\} \]
is a sequent congruence. 
\item If $\theta$ is a sequent congruence, then $1_{\theta}=\left\{ a\in A:\left(a,1\right)\in\theta\right\} $
is a regular filter. 
\end{enumerate}
Furthermore, the poset of sequent congruences of $\mathbf{A}$ is
isomorphic to the poset of regular filters of $\mathbf{A}$. \end{theorem}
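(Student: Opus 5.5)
The plan is to rely on the classical correspondence between Boolean congruences and filters, $F\mapsto\theta(F)$ and $\theta\mapsto 1_\theta$. These maps are mutually inverse order isomorphisms between $\mathrm{Fi}(\mathbf{A})$ and the Boolean congruences of $\mathbf{A}$. It therefore suffices to show two things: $\theta(F)$ satisfies $\mathrm{(SCon)}$ whenever $F$ is regular, and $1_\theta$ is regular whenever $\theta$ satisfies $\mathrm{(SCon)}$. The ``furthermore'' part then follows by restricting this isomorphism.

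For (1), let $F$ be regular, and suppose $(a,b)\in\theta(F)$, so $a\wedge f=b\wedge f$ for some $f\in F$. Take a finite $\{c\}\cup B$ and $d\in I(\neg a,B,c)$, i.e. $\neg a,d,B\vdash c$. By regularity, choose $z\in F$ with $z\prec f$, i.e. $z,\neg f\vdash 0$. The candidate is $e=d\wedge z$, which satisfies $(d,e)\in\theta(F)$ because $d\wedge z=e\wedge z$ with $z\in F$. It remains to check $\neg b,d\wedge z,B\vdash c$. Since $\neg b\le(\neg b\wedge f)\vee\neg f$ and $\neg b\wedge f=\neg a\wedge f\le\neg a$, we have $\neg b\le\neg a\vee\neg f$. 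By Lemma \ref{props SequentALg}(3), it suffices to prove $\neg a\vee\neg f,d\wedge z,B\vdash c$. By condition (6), this splits into two sequents:
\begin{enumerate}
\item $\neg a,d\wedge z,B\vdash c$, which follows from $\neg a,d,B\vdash c$ by Lemma \ref{props SequentALg}(3);
\item $\neg f,d\wedge z,B\vdash c$, which follows from $z,\neg f\vdash0$ by Lemma \ref{props SequentALg}(3), then monotonicity (2), then Lemma \ref{props SequentALg}(4) with $0\le c$.
\end{enumerate}
This bookkeeping is the main technical step, and it is routine once $e=d\wedge z$ is chosen.

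For (2), let $\theta$ be a sequent congruence. Then $1_\theta$ is a filter, and we need regularity. Let $a\in 1_\theta$, so $(1,a)\in\theta$. Apply $\mathrm{(SCon)}$ to the pair $(1,a)$ with $B=\emptyset$ and $c=0$. Since $\neg1=0$ and $0,x\vdash0$ holds for every $x$, we have $1\in I(0,\emptyset,0)$. Hence there exists $e\in I(\neg a,\emptyset,0)$ with $(1,e)\in\theta$. This means $e\in 1_\theta$ and $\neg a,e\vdash0$, i.e. $e\prec a$ by Lemma \ref{lem: prec simple}. So $1_\theta$ is regular.

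Finally, $\theta(1_\theta)=\theta$ and $1_{\theta(F)}=F$ hold by the Boolean theory, and both maps are monotone. Hence the posets of sequent congruences and of regular filters are isomorphic.
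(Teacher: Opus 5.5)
Your proposal is correct and matches the paper's proof essentially step for step. It uses the same witness $d\wedge z$ with $z\in F$, $z\prec f$, in part (1), the same application of $\mathrm{(SCon)}$ to $(1,a)$ with $B=\emptyset$ and $c=0$ in part (2), and the same restriction of the classical filter--congruence correspondence; the only cosmetic difference is that you get $\neg b\le\neg a\vee\neg f$ from $\neg b\wedge f=\neg a\wedge f$, whereas the paper uses $\neg a\vee\neg f=\neg(a\wedge f)=\neg(b\wedge f)=\neg b\vee\neg f$.
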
 
\begin{proof}
(1) Let $F$ be a regular filter. It is well known that $\theta(F)$
is a Boolean congruence. Let $(a,b)\in\theta(F)$, let $\left\{ c\right\} \cup B\subseteq_{f}A$
be fixed, and let $e\in I(\neg a,B,c)$, i.e., 
\[
e,\neg a,B\vdash c.
\]
We need to find $d\in A$ such that $d,\neg b,B\vdash c$ and $(e,d)\in\theta(F)$.
Since $(a,b)\in\theta(F)$, there exists $f\in F$ such that $a\wedge f=b\wedge f$.
As $F$ is regular, there exists $z\in F$ such that $z,\neg f\vdash0$;
by Lemma \ref{lem: prec simple}, $z,\neg f,B\vdash c$. By Lemma
\ref{props SequentALg} (3) we have $e\wedge z,\neg a,B\vdash c$ and
$e\wedge z,\neg f,B\vdash c$, so condition (6) gives 
\[
e\wedge z,\neg a\vee\neg f,B\vdash c.
\]
Since 
\[
\neg a\vee\neg f=\neg(a\wedge f)=\neg(b\wedge f)=\neg b\vee\neg f
\]
and $\neg b\leq\neg b\vee\neg f$, Lemma \ref{props SequentALg} (3)
yields 
\[
e\wedge z,\neg b,B\vdash c.
\]
Since $z\in F$ and $(e\wedge z)\wedge z=e\wedge z$, we get $\left(e,e\wedge z\right)\in\theta(F)$.
Take $d=e\wedge z$. Then $d,\neg b,B\vdash c$ and $(e,d)\in\theta(F)$.

(2) Let $\theta$ be a sequent congruence. It is well known that $1_{\theta}$
is a filter. Let $f\in1_{\theta}$, i.e., $(1,f)\in\theta$. We apply
$\mathrm{(SCon)}$ with $c=0$ and $B=\emptyset$. Since $\neg1=0$
and $0,1\vdash0$, we have $1\in I(\neg1,\emptyset,0)$. Hence there
exists $z\in I(\neg f,\emptyset,0)$ such that $(1,z)\in\theta$,
that is, $z,\neg f\vdash0$ and $z\in1_{\theta}$. By Lemma \ref{lem: prec simple},
$z\prec f$. Thus $1_{\theta}$ is a regular filter.

For the last part, recall that the assignments $\theta\mapsto1_{\theta}$
and $F\mapsto\theta(F)$ are mutually inverse order-preserving bijections
between Boolean congruences and filters of $A$, i.e., $\theta(1_{\theta})=\theta$
and $1_{\theta(F)}=F$. By (1) and (2), they restrict to mutually
inverse order-isomorphisms between sequent congruences and regular
filters. 
\end{proof}

Combining Theorem \ref{postive to mdecreceasing}, Proposition \ref{prop: positive iff regular}
and Theorem \ref{thm: congruences regular}, we obtain:

\begin{corollary}\label{cor: congruences md} Let $\mathbf{A}=\left<A,\vdash\right>$
be a sequent algebra and let $F$ be a filter of $A$. Then $F$ is
regular if and only if $Y_{F}=\left\{ G\in X(\mathbf{A}):F\subseteq G\right\} $
is an m-decreasing closed set of $\langle X(\mathbf{A}),\tau_{\mathcal{B}_{\mathbf{A}}}\rangle$.
Consequently, the poset of sequent congruences of $\mathbf{A}$ is
isomorphic to $\langle\mathrm{PF}(\mathbf{A}),\subseteq\rangle$ and
dually isomorphic to $\langle\mathrm{MD}(X(\mathbf{A})),\subseteq\rangle$.
\end{corollary}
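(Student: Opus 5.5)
The plan is to assemble Corollary \ref{cor: congruences md} from the three results just proved, with one extra check about closedness of filters.

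For the first claim, let $F$ be a filter of $\mathbf{A}$. By Definition \ref{def:regular filter} and Proposition \ref{prop: positive iff regular}, $F$ is regular iff it is positive.

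If $F$ is positive, Theorem \ref{postive to mdecreceasing}(1) gives directly that $Y_F$ is an m-decreasing closed set.

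For the converse, suppose $Y_F$ is m-decreasing. Theorem \ref{postive to mdecreceasing}(2) then says that $F_{Y_F}$ is a positive filter. Every filter is $\mathrm{C}$-closed, by $\mathbf{Fi}$ and Lemma \ref{lem: X(A) and ultrafilters}(1), so Proposition \ref{closed-closed} applies and gives $F_{Y_F}=F$. Hence $F$ is positive, and therefore regular. This identification $F_{Y_F}=F$ is the only point needing care: it requires knowing that $F$ is closed, which is exactly what $\mathbf{Fi}$ supplies.

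For the ``consequently'' part, compose two order maps:
\begin{itemize}
\item Theorem \ref{thm: congruences regular} gives an order-isomorphism $\theta\mapsto 1_\theta$ from sequent congruences onto regular filters. Since regular filters are exactly the positive ones, this lands onto $\langle\mathrm{PF}(\mathbf{A}),\subseteq\rangle$.
\item Theorem \ref{postive to mdecreceasing} gives a dual isomorphism $\mathrm{PF}(\mathbf{A})\to\mathrm{MD}(X(\mathbf{A}))$, namely $F\mapsto Y_F$, with inverse $Y\mapsto F_Y$.
\end{itemize}
An isomorphism followed by a dual isomorphism is a dual isomorphism, so $\theta\mapsto Y_{1_\theta}$ is a dual isomorphism from the poset of sequent congruences onto $\langle\mathrm{MD}(X(\mathbf{A})),\subseteq\rangle$.

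I expect no step to be a real obstacle. The only thing to verify is that the maps in the two theorems really are the ones composed here ($1_\theta$, $Y_F$, $F_Y$), so that bijectivity and order-reversal carry over; this is immediate from their statements.
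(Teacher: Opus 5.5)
Your proposal is correct and follows the paper's proof step for step. You identify regular with positive filters via Proposition~\ref{prop: positive iff regular}, use Theorem~\ref{postive to mdecreceasing}(1) for one direction, and use Theorem~\ref{postive to mdecreceasing}(2) with $F_{Y_F}=F$ (filters are closed by $\mathbf{Fi}$, then Proposition~\ref{closed-closed}) for the converse; the lattice statements then come from composing Theorems~\ref{thm: congruences regular} and~\ref{postive to mdecreceasing}, exactly as the paper does.
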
 
\begin{proof}
If $F$ is regular, it is positive by Proposition \ref{prop: positive iff regular},
so $Y_{F}$ is m-decreasing by Theorem \ref{postive to mdecreceasing} (1).
Conversely, if $Y_{F}$ is m-decreasing, then $F_{Y_{F}}$ is positive
by Theorem \ref{postive to mdecreceasing} (2). Since $F$ is closed
(by $\mathbf{Fi}$), $F_{Y_{F}}=F$ by Proposition \ref{closed-closed},
so $F$ is positive, hence regular. The last statement follows from
Theorems \ref{postive to mdecreceasing} and \ref{thm: congruences regular}. 
\end{proof}

\section{$\mathsf{SAB}$ is topological over $\mathsf{BA}$}

Given a topological space $\langle X,\tau\rangle$ and a subset $A\subseteq X$,
the subspace topology on $A$ is simply the smallest topology that
still makes the inclusion map $A\hookrightarrow X$ continuous. Likewise,
if we have a family of topological spaces $\{X_{i}\}_{i\in I}$, the
product topology on $\prod_{i\in I}X_{i}$ is the coarsest topology
that makes all projection maps $p_{i}$ continuous.

Now, generalizing this idea, suppose we have a family of spaces $\{\langle X_{i},\tau_{i}\rangle\}_{i\in I}$,
a set $X$, and a collection of functions $G=\{f_{i}:X\to X_{i}\}_{i\in I}$.
Our goal is to find the ‘best’ topology on $X$---the one with the
fewest open sets---such that every function $f_{i}$ is continuous.
This leads to what is called the \emph{initial topology}, which is
the topology $\tau$ generated by the subbasis: 
\[
\mathcal{F}_{G}=\{f^{-1}_{i}[V_{i}]\mid V_{i}\in\tau_{i},i\in I\}.
\]
In other words, it is the coarsest topology that
makes all the functions in $G$ continuous. It is well known that
the initial topology has the following property:

\begin{proposition}\label{structured - initial lift} Let $\langle X,\tau_{G}\rangle$
be a topological space where $\tau_{G}$ is the initial topology induced
by the family of functions $G=\{f_{i}:X\to X_{i}\colon i\in I\}$.
Let $\langle Y,\tau_{Y}\rangle$ be a topological space. A function
$g:Y\to X$ is continuous if and only if the composition $f_{i}\circ g:Y\to X\to X_{i}$
is continuous for each $i\in I$. \end{proposition}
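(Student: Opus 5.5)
The plan is to prove the two implications separately. The forward direction is just composition of continuous maps. The backward direction reduces to checking preimages of subbasic open sets of $\tau_G$.

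For the forward direction, assume $g\colon Y\to X$ is continuous. By the definition of $\tau_G$ as the topology generated by $\mathcal{F}_G$, each $f_i$ is continuous from $\langle X,\tau_G\rangle$ to $\langle X_i,\tau_i\rangle$: for $V_i\in\tau_i$ we have $f_i^{-1}[V_i]\in\mathcal{F}_G\subseteq\tau_G$. Hence $f_i\circ g$ is continuous for each $i\in I$, being a composition of continuous maps.

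For the converse, assume every $f_i\circ g$ is continuous. The key observation is that preimages commute with unions and finite intersections. So it suffices to show that $g^{-1}[W]\in\tau_Y$ for every $W\in\mathcal{F}_G$, because every open set of $\tau_G$ is a union of finite intersections of members of $\mathcal{F}_G$. Concretely:
\begin{enumerate}
\item For $W=f_i^{-1}[V_i]$ with $V_i\in\tau_i$, we have $g^{-1}[W]=(f_i\circ g)^{-1}[V_i]$, which is open in $Y$ by hypothesis.
\item If $U=\bigcup_{k}\bigcap_{j=1}^{n_k}W_{k,j}$ with each $W_{k,j}\in\mathcal{F}_G$, then
\[
g^{-1}[U]=\bigcup_{k}\bigcap_{j=1}^{n_k}g^{-1}[W_{k,j}],
\]
and this set is open in $\tau_Y$.
\end{enumerate}
The empty intersection, which gives $X$ itself, is harmless, since $g^{-1}[X]=Y$.

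There is no real obstacle. The only point needing care is the description of the topology generated by a subbasis as the family of unions of finite intersections of subbasic sets. This is standard and I would cite it rather than reprove it.
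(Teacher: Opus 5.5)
Your proof is correct. The paper does not prove this proposition: it states it as a well-known property of the initial topology. Your argument is the standard proof it takes for granted, namely composition of continuous maps for the forward direction and checking preimages of the subbasic sets $f_i^{-1}[V_i]$ for the converse.

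If you would rather not cite the description of $\tau_G$ as unions of finite intersections of members of $\mathcal{F}_G$, there is a shorter route. The family $\{W\subseteq X\colon g^{-1}[W]\in\tau_Y\}$ is itself a topology on $X$ containing $\mathcal{F}_G$, so it contains $\tau_G$.
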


Moreover, this property characterizes the initial topology.

\begin{proposition}\label{uniqueness - initial lift} Let $X$ be
a set, and let $G=\{f_{i}:\langle X,\tau\rangle\to\langle X_{i},\tau_{i}\rangle\}_{i\in I}$
be a collection of continuous functions. A topology $\tau$ on $X$
is equal to the initial topology $\tau_{G}$ if and only if $\tau$
satisfies the conditions of Proposition \ref{structured - initial lift}.
\end{proposition}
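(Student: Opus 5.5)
Both directions rest on the explicit description of $\tau_G$. Since $\mathcal{F}_G=\{f_i^{-1}[V_i]\colon V_i\in\tau_i,\ i\in I\}$ is a subbasis, every open set of $\tau_G$ is a union of finite intersections of sets $f_i^{-1}[V_i]$.

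For the ``if'' direction, assume $\tau$ is a topology on $X$ with the property of Proposition \ref{structured - initial lift}: for every space $\langle Y,\tau_Y\rangle$, a map $g\colon Y\to X$ is $\tau$-continuous iff every $f_i\circ g$ is continuous. We prove $\tau\subseteq\tau_G$ and $\tau_G\subseteq\tau$ by testing the property with identity maps.

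\begin{enumerate}
\item Take $Y=\langle X,\tau\rangle$ and $g=\mathrm{id}_X$. This $g$ is continuous, so the property gives that each $f_i\colon\langle X,\tau\rangle\to X_i$ is continuous. Hence $\tau$ contains $\mathcal{F}_G$. Since $\tau_G$ is the coarsest topology containing $\mathcal{F}_G$, we get $\tau_G\subseteq\tau$.
\item Take $Y=\langle X,\tau_G\rangle$ and $g=\mathrm{id}_X\colon\langle X,\tau_G\rangle\to\langle X,\tau\rangle$. Each $f_i\circ g=f_i$ is $\tau_G$-continuous by the definition of $\tau_G$. The property then says $g$ is continuous, which means $\tau\subseteq\tau_G$.
\end{enumerate}

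For the ``only if'' direction, assume $\tau=\tau_G$; it suffices to verify Proposition \ref{structured - initial lift} for $\tau_G$.
\begin{enumerate}
\item If $g$ is continuous, each $f_i\circ g$ is a composite of continuous maps, hence continuous.
\item Conversely, suppose every $f_i\circ g$ is continuous. Then $g^{-1}[f_i^{-1}[V_i]]=(f_i\circ g)^{-1}[V_i]$ is open for every subbasic set. Preimages commute with unions and finite intersections, so $g^{-1}[U]$ is open for every $U\in\tau_G$.
\end{enumerate}

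No step is a genuine obstacle. The one point needing care is the standing hypothesis that the $f_i$ are continuous from $\langle X,\tau\rangle$. This is only used to make sense of the statement: in the ``if'' direction, step 1 derives it from the property anyway, and in the ``only if'' direction it holds automatically for $\tau_G$.
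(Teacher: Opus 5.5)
Your proof is correct and complete. Testing the characterizing property on $\mathrm{id}_X\colon\langle X,\tau\rangle\to\langle X,\tau\rangle$ gives $\tau_G\subseteq\tau$, and testing it on $\mathrm{id}_X\colon\langle X,\tau_G\rangle\to\langle X,\tau\rangle$ gives $\tau\subseteq\tau_G$, while the other direction follows from the subbasis description of $\tau_G$. The paper gives no proof of its own here and simply cites the result as well known; your argument is the standard one.
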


Let $\mathsf{Top}$ be the category of topological spaces and continuous
functions and $\mathsf{Set}$ be the category of sets and functions.
If $O:\mathsf{Top}\to\mathsf{Set}$ denotes the forgetful functor,
that is, the one defined by the assignments $O(\langle X,\tau\rangle)=X$
and $O(\langle X,\tau\rangle\xrightarrow{f}\langle Y,\mu\rangle)=X\xrightarrow{f}Y$,
then Propositions \ref{structured - initial lift} and \ref{uniqueness - initial lift}
may be summarized as follows: For every family of functions $G=\{f_{i}:X\to O(\langle X_{i},\tau_{i}\rangle)\colon i\in I\}$
there exists a unique topology $\tau$ on $X$ such that the family
$\{f_{i}:\langle X,\tau\rangle\to\langle X_{i},\tau_{i}\rangle\colon i\in I\}$
is a family of continuous functions satisfying Proposition \ref{structured - initial lift}.
\\

We emphasize that the initial topology problem is a particular case
of a much more general situation. In fact, categories in which this
problem has a unique solution have a specific name: they are called
\emph{topological categories}, and they have been well studied in
the literature (see \cite{AHS2006} and the references therein). Their
importance lies in the fact that they provide a categorical approach
that captures, in categorical terms, the essence of topology. \\

\medskip{}
 This section is devoted to showing that the category $\mathsf{SAB}$
is topological over $\mathsf{BA}$. To achieve this, we first recall
some definitions, all of which can be found in Chapter VI of \cite{AHS2006}.
\\

Consider two categories $\mathsf{A}$ and $\mathsf{X}$, and a functor
$U:\mathsf{A}\to\mathsf{X}$. A pair $(\mathsf{A},U)$ is said to
be a \emph{concrete category over} $\mathsf{X}$ if $U$ is faithful.
A morphism $g:U(B)\to U(A)$ is said to be an \emph{$\mathsf{A}$-arrow},
if there exists an arrow (necessarily unique) $a:A\to B$ such that
$U(a)=g$. An \emph{$U$-structured source} $S$ is a pair $(X,(f_{i},A_{i})_{i\in I})$
consisting of an object $X$ of $\mathsf{X}$ and a family of morphisms
$f_{i}:X\to U(A_{i})$ in $\mathsf{X}$, indexed by some set $I$.
Throughout this section, we write $U$-structured sources simply as
$S:=\{f_{i}:X\to U(A_{i})\}_{i\in I}$. Moreover, if $U$ is the identity
functor on $\mathsf{A}$, then $U$-structured sources are simply
referred to as \emph{sources}. A source $T=\{a_{i}:A\to A_{i}\}_{i\in I}$
is said to be a \emph{lifting of $S$} if $U(A)=X$ and $U(a_{i})=f_{i}$,
for every $i\in I$. Moreover, $T$ is said to be \emph{initial},
if for every arrow $g:U(B)\to U(A)$, it is the case that $g$ is
an $\mathsf{A}$-arrow provided that $f_{i}g:U(B)\to U(A_{i})$ is
an $\mathsf{A}$-arrow, for every $i\in I$. That is to say, if for
every $i\in I$, there exists a morphism $w_{i}:B\to A_{i}$ such
that $U(w_{i})=f_{i}g$, then there exists a morphism $h:B\to A$
such that $g=U(h)$. The converse is immediate. This situation is
depicted in the diagram below. 
\[
\begin{array}{ccc}
\xymatrix{B\ar@{-->}[rr]^-{\exists\;h}\ar@{-->}[dr]_-{\exists\;w_{i}} &  & A\ar[dl]^-{a_{i}}\\
 & A_{i}
}
 &  & \xymatrix{U(B)\ar[rr]^-{g=U(h)}\ar[dr]_-{U(w_{i})=f_{i}g} &  & U(A)\ar[dl]^-{f_{i}=U(a_{i})}\\
 & U(A_{i})
}
\end{array}
\]

Finally, a functor $U:\mathsf{A}\to\mathsf{X}$ is said to be \emph{topological}
if every $U$-structured source has a unique initial lifting. Therefore,
a concrete category $(\mathsf{A},U)$ over $\mathsf{X}$ is topological
if $U$ is topological. \\

Let $h:\langle\mathbf{A},\mathrm{C}\rangle\to\langle\mathbf{B},\mathrm{D}\rangle$
be a sequent homomorphism. It is evident that the assignments $U(\langle\mathbf{A},\mathrm{C}\rangle)=\mathbf{A}$,
$U(h)=h$ determine a faithful functor $U:\mathsf{SAB}\to\mathsf{BA}$.
So, the pair $(\mathsf{SAB},U)$ is a concrete category over $\mathsf{BA}$.

\begin{lemma}\label{existence of liftings} Let $\mathbf{A}$ be
a Boolean algebra and let $S=\{f_{i}:\mathbf{A}\to U(\langle\mathbf{B}_{i},\mathrm{C}_i\rangle)\}_{i\in I}$
be a $U$-structured source. Then $S$ has a lifting. \end{lemma}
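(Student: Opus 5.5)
We need a sequent structure $\mathrm{C}$ on $\mathbf{A}$ such that each Boolean homomorphism $f_i$ becomes a sequent homomorphism $\langle\mathbf{A},\mathrm{C}\rangle\to\langle\mathbf{B}_i,\mathrm{C}_i\rangle$. By condition (1) of Lemma \ref{lem:chacact homo contact}, this amounts to requiring $f_i[\mathrm{C}(X)]\subseteq\mathrm{C}_i(f_i[X])$ for every $X\subseteq A$. The simplest choice is to make $\mathrm{C}$ as small as possible, namely the ``discrete'' structure.

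The candidate is $\mathrm{C}_{\min}$, already used in Section 7 for the four-element algebra:
\[
\mathrm{C}_{\min}(Z)=\{1\}\ \text{if } Z=\emptyset,\qquad \mathrm{C}_{\min}(Z)={\uparrow}Z\ \text{if } Z\neq\emptyset .
\]
Equivalently, $a_1,\ldots,a_n\vdash b$ iff $n\geq 1$ and $a_j\leq b$ for some $j$, or $b=1$.

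The first step is to check that $\mathrm{C}_{\min}$ is a finitary closure operator satisfying $\mathbf{Fi}$, $\mathbf{One}$, $\mathbf{Up}$ and $\mathbf{Jo}$.
\begin{itemize}
\item Extensivity and monotonicity are clear. Idempotence holds since ${\uparrow}{\uparrow}Z={\uparrow}Z$, and $1\in{\uparrow}Z$ whenever $Z\neq\emptyset$.
\item Finitarity holds because $b\in{\uparrow}Z$ is witnessed by a single element of $Z$.
\item $\mathbf{Fi}$ holds since ${\uparrow}Z\subseteq\mathrm{Fg}(Z)$. $\mathbf{One}$ and $\mathbf{Up}$ are immediate from the definition.
\item For $\mathbf{Jo}$, let $c\in{\uparrow}(X\cup\{a\})\cap{\uparrow}(X\cup\{b\})$. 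If $c\geq x$ for some $x\in X$, then $c\in{\uparrow}(X\cup\{a\vee b\})$. Otherwise $c\geq a$ and $c\geq b$, so $c\geq a\vee b$.
\end{itemize}
By Proposition \ref{prop: charact closure}, $\langle\mathbf{A},\mathrm{C}_{\min}\rangle$ is therefore a sequent algebra. This step also justifies the forward reference to Proposition \ref{fibre-complete}.

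The second step is to check that each $f_i$ is a sequent homomorphism, using Lemma \ref{lem:chacact homo contact}(1). Each $f_i$ is a Boolean homomorphism, so it is a join-homomorphism preserving $1$. It remains to show $f_i[\mathrm{C}_{\min}(X)]\subseteq\mathrm{C}_i(f_i[X])$, and there are two cases.
\begin{itemize}
\item If $X=\emptyset$, then $f_i[\{1\}]=\{1\}\subseteq\mathrm{C}_i(\emptyset)$ by $\mathbf{One}$.
\item If $X\neq\emptyset$ and $a\geq x$ with $x\in X$, then $f_i(a)\geq f_i(x)$. Since $f_i(x)\in\mathrm{C}_i(f_i[X])$ and that set is an upset by $\mathbf{Up}$, we get $f_i(a)\in\mathrm{C}_i(f_i[X])$.
\end{itemize}
Hence $\{f_i:\langle\mathbf{A},\mathrm{C}_{\min}\rangle\to\langle\mathbf{B}_i,\mathrm{C}_i\rangle\}_{i\in I}$ is a lifting of $S$. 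The lifting exists even when $I=\emptyset$.

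The only point needing care is $\mathbf{Jo}$ and the empty-premise case, both handled above. I expect no real obstacle: this lemma only asserts existence, and the substantive work, constructing the \emph{initial} lifting as the largest admissible structure, comes in the subsequent results.
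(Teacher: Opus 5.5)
Your proof is correct, but it takes a genuinely different route from the paper's. You lift $S$ along the \emph{smallest} sequent structure $\mathrm{C}_{\min}$ on $\mathbf{A}$. Since the morphism condition $f_i[\mathrm{C}(X)]\subseteq\mathrm{C}_i(f_i[X])$ only gets easier as $\mathrm{C}$ shrinks, the check reduces to extensivity, $\mathbf{One}$ and $\mathbf{Up}$ in the codomains. Your verification of the four axioms for $\mathrm{C}_{\min}$ also settles the claim deferred to Proposition~\ref{fibre-complete}.

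The paper instead builds the \emph{largest} admissible structure. It takes the closure system generated by $\mathcal{F}_S=\{f_i^{-1}[F_i]\colon F_i\in X(\mathbf{B}_i)\}\cup\Fi(\mathbf{A})$ and finitarizes it to get $\mathrm{C}_S$. It then has to work harder: it uses that clans form a base (Proposition~\ref{clans for a basis}) to get $f_i[\mathrm{C}_S(Z)]\subseteq\mathrm{C}_i(f_i[Z])$, join-primeness of clans plus an ultrafilter argument for $\mathbf{Jo}$, and a finitarization argument for idempotence.

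That extra effort pays off in Lemma~\ref{lifting is initial}, which is stated for ``the lifting obtained in Lemma~\ref{existence of liftings}''. Only $\mathrm{C}_S$ is initial; $\mathrm{C}_{\min}$ generally is not. Take $I=\emptyset$ and the four-element algebra with atoms $a,b$. The identity $\langle\mathbf{A},\mathrm{Fg}^{\mathbf{A}}\rangle\to\langle\mathbf{A},\mathrm{C}_{\min}\rangle$ satisfies the initiality hypothesis vacuously. It is not a sequent homomorphism, because $0\in\mathrm{Fg}^{\mathbf{A}}(\{a,b\})\setminus\mathrm{C}_{\min}(\{a,b\})$.

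So your argument proves the lemma exactly as stated: it is the observation that a discrete structure exists. If it replaced the paper's proof, the construction of $\mathrm{C}_S$ and the verification of its axioms would have to move into the proof of Lemma~\ref{lifting is initial}, as you anticipate.
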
 
\begin{proof}
Set 
\[
\mathcal{F}_{S}=\{f^{-1}_{i}[F_{i}]\colon F_{i}\in X(\mathbf{B}_{i}),\;i\in I\}\cup\mathrm{Fi}(\mathbf{A}).
\]
For $Z\subseteq A$, define 
\[
\mathrm{D}_{S}(Z)=\bigcap\{H\in\mathcal{F}_{S}\colon Z\subseteq H\}
\]
and let 
\[
\mathrm{C}_{S}(Z)=\bigcup\{\mathrm{D}_{S}(W)\colon W\subseteq_{f}Z\}.
\]
The operator $\mathrm{D}_{S}$ is the closure operator generated by
the family $\mathcal{F}_{S}$, and $\mathrm{C}_{S}$ is its finitarization.
A standard argument shows that $\mathrm{C}_{S}$ is again a closure
operator: extensivity and monotonicity are immediate, and if $x\in\mathrm{C}_{S}(\mathrm{C}_{S}(Z))$,
only finitely many members of $\mathrm{C}_{S}(Z)$ are involved; collecting
finite witnesses for these members and using the idempotence of $\mathrm{D}_{S}$
gives $x\in\mathrm{C}_{S}(Z)$. Thus $\mathrm{C}_{S}$ is finitary
by construction.

We first show that, for every $i\in I$ and $Z\subseteq A$, 
\[
f_{i}[\mathrm{D}_{S}(Z)]\subseteq\mathrm{C}_i(f_{i}[Z]).
\]
Suppose that $x\in\mathrm{D}_{S}(Z)$ but $f_{i}(x)\notin\mathrm{C}_i(f_{i}[Z])$.
By Proposition \ref{clans for a basis}, there is $P\in X(\mathbf{B}_{i})$
such that $f_{i}[Z]\subseteq P$ and $f_{i}(x)\notin P$. Hence $Z\subseteq f^{-1}_{i}[P]\in\mathcal{F}_{S}$
while $x\notin f^{-1}_{i}[P]$, contradicting $x\in\mathrm{D}_{S}(Z)$.
Consequently, 
\[
f_{i}[\mathrm{C}_{S}(Z)]\subseteq\mathrm{C}_i(f_{i}[Z]),
\]
because every element of $\mathrm{C}_{S}(Z)$ already belongs to $\mathrm{D}_{S}(W)$
for some finite $W\subseteq Z$.

We verify the closure conditions. Since $\{1\}\in\mathrm{Fi}(\mathbf{A})$
and by \textbf{One} every member of $\mathcal{F}_{S}$ contains $1$, we have $\mathrm{C}_{S}(\emptyset)=\mathrm{D}_{S}(\emptyset)=\{1\}$,
so $\mathbf{One}$ holds. If $W\subseteq_{f}Z$, then $\mathrm{Fg}^{\mathbf{A}}(W)\in\mathcal{F}_{S}$, $\mathrm{D}_{S}(\mathrm{Fg}^{\mathbf{A}}(W))=\mathrm{Fg}^{\mathbf{A}}(W)$ and $W\subseteq \mathrm{Fg}^{\mathbf{A}}(W)$, whence $\mathrm{D}_{S}(W)\subseteq\mathrm{Fg}^{\mathbf{A}}(W)$.
Therefore 
\[
\mathrm{C}_{S}(Z)\subseteq\bigcup_{W\subseteq_{f}Z}\mathrm{Fg}^{\mathbf{A}}(W)\subseteq \bigvee_{W\subseteq_{f}Z}\mathrm{Fg}^{\mathbf{A}}(W)=\mathrm{Fg}^{\mathbf{A}}(Z),
\]
so $\mathbf{Fi}$ holds. Every member of $\mathcal{F}_{S}$ is an
upset; hence each $\mathrm{D}_{S}(W)$, and therefore $\mathrm{C}_{S}(Z)$,
is an upset. Thus $\mathbf{Up}$ holds.

It remains to verify $\mathbf{Jo}$. First note that $\mathrm{D}_{S}$
satisfies it. Suppose $x\in\mathrm{D}_{S}(Z,a)\cap\mathrm{D}_{S}(Z,b)$,
and let $H\in\mathcal{F}_{S}$ contain $Z\cup\{a\vee b\}$. If $H$
is a proper Boolean filter and $x\notin H$, an ultrafilter extending
$H$ and omitting $x$ would contain $a$ or $b$, contradicting the
assumed membership of $x$ in $\mathrm{D}_{S}(Z,a)\cap\mathrm{D}_{S}(Z,b)$.
(For $H=A$ there is nothing to prove.) If $H=f^{-1}_{i}[F_{i}]$,
then $f_{i}(a)\vee f_{i}(b)=f_{i}(a\vee b)\in F_{i}$; since the clan
$F_{i}$ is join-prime, either $a\in H$ or $b\in H$, and again $x\in H$.
Hence $x\in\mathrm{D}_{S}(Z,a\vee b)$.

Now let $x\in\mathrm{C}_{S}(Z,a)\cap\mathrm{C}_{S}(Z,b)$. There are
finite $W_{1},W_{2}\subseteq Z$ such that $x\in\mathrm{D}_{S}(W_{1},a)$
and $x\in\mathrm{D}_{S}(W_{2},b)$. With $W=W_{1}\cup W_{2}$, monotonicity
and the preceding paragraph yield 
\[
x\in\mathrm{D}_{S}(W,a)\cap\mathrm{D}_{S}(W,b)\subseteq\mathrm{D}_{S}(W,a\vee b)\subseteq\mathrm{C}_{S}(Z,a\vee b).
\]
Thus $\langle\mathbf{A},\mathrm{C}_{S}\rangle$ is a sequent algebra
and the maps $f_{i}$ are sequent homomorphisms. Hence $S$ has a
lifting. 
\end{proof}

\begin{lemma}\label{lifting is initial} Let $\mathbf{A}$ be a Boolean
algebra and let $S=\{f_{i}:\mathbf{A}\to U(\langle\mathbf{B}_{i},\mathrm{C}_i\rangle)\}_{i\in I}$
be a $U$-structured source. If $T_{S}$ denotes the lifting associated
with $S$ obtained in Lemma \ref{existence of liftings}, then $T_{S}$
is initial and unique. \end{lemma}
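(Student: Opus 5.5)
Let $\langle\mathbf{B},\mathrm{D}\rangle$ be a sequent algebra and $g:\mathbf{B}\to\mathbf{A}$ a Boolean homomorphism such that each $f_i g$ is a sequent homomorphism into $\langle\mathbf{B}_i,\mathrm{C}_i\rangle$. We must show that $g$ is a sequent homomorphism into $\langle\mathbf{A},\mathrm{C}_S\rangle$. Since $g$ is already Boolean, by Lemma \ref{lem:chacact homo contact} it suffices to check that $g^{-1}[F]\in X(\mathbf{B})$ for every clan $F$ of $\langle\mathbf{A},\mathrm{C}_S\rangle$. I would instead verify condition (1) directly: $g[\mathrm{D}(X)]\subseteq\mathrm{C}_S(g[X])$ for all $X\subseteq B$. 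By finitarity of $\mathrm{D}$, it is enough to treat finite $X$. Let $b\in\mathrm{D}(X)$. Since $\mathrm{D}_S(g[X])\subseteq\mathrm{C}_S(g[X])$ when $X$ is finite, it suffices to show $g(b)\in H$ for every $H\in\mathcal{F}_S$ with $g[X]\subseteq H$. We split into two cases. If $H$ is a filter of $\mathbf{A}$, then $\mathbf{Fi}$ gives $b\in\mathrm{Fg}(X)$, so $b\geq x_1\wedge\cdots\wedge x_n$ with $x_j\in X$. Because $g$ preserves meets and order, $g(b)\in H$. If $H=f_i^{-1}[F_i]$ with $F_i\in X(\mathbf{B}_i)$, then $f_ig[X]\subseteq F_i$. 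Since $f_ig$ is a sequent homomorphism, $f_ig(b)\in\mathrm{C}_i(f_ig[X])\subseteq F_i$, so $g(b)\in H$. Hence $g$ is a sequent homomorphism, and $T_S$ is initial. The converse direction, that compositions of sequent homomorphisms are sequent homomorphisms, is immediate.

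For uniqueness, I interpret the statement as saying that any initial lifting of $S$ coincides with $T_S$, i.e. has closure operator $\mathrm{C}_S$. Suppose $\langle\mathbf{A},\mathrm{C}'\rangle$ carries an initial lifting. First apply initiality of $T_S$ to $g=\mathrm{id}_A$ from $\langle\mathbf{A},\mathrm{C}'\rangle$: the maps $f_i$ are sequent homomorphisms from $\mathrm{C}'$, so $\mathrm{id}$ is a sequent homomorphism from $\mathrm{C}'$ to $\mathrm{C}_S$. This gives $\mathrm{C}'(Z)\subseteq\mathrm{C}_S(Z)$. Symmetrically, initiality of the $\mathrm{C}'$-lifting, applied to $\mathrm{id}$ from $\langle\mathbf{A},\mathrm{C}_S\rangle$, which is legitimate because $T_S$ is a lifting by Lemma \ref{existence of liftings}, yields $\mathrm{C}_S(Z)\subseteq\mathrm{C}'(Z)$. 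Therefore $\mathrm{C}'=\mathrm{C}_S$. Since morphisms are determined by their underlying maps, the lifting source is also the same.

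The main obstacle is the finitary reduction in the initiality step. Elements of $\mathrm{C}_S(g[X])$ arise from finite subsets $W\subseteq g[X]$, so we must make sure finite $W$ can be taken of the form $g[X_0]$ with $X_0$ finite. The direction we actually need avoids this issue: we start from finite $X_0\subseteq X$ with $b\in\mathrm{D}(X_0)$, and then $g[X_0]$ is a finite subset of $g[X]$. This yields $g(b)\in\mathrm{D}_S(g[X_0])\subseteq\mathrm{C}_S(g[X])$, as required.
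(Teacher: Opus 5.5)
Your proof is correct and follows essentially the same route as the paper. You reduce to a finite $X_0$ with $b\in\mathrm{D}(X_0)$ and show that $g(b)$ lies in every member of $\mathcal{F}_S$ containing $g[X_0]$, splitting into the filter case (via $\mathbf{Fi}$) and the clan-preimage case (via $f_ig$ being a sequent homomorphism); uniqueness then comes from applying initiality to the identity map in both directions.
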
 
\begin{proof}
Let $\langle\mathbf{E},\mathrm{D}\rangle$ be a sequent
algebra and let $g:\mathbf{E}\to\mathbf{A}$ be a Boolean homomorphism 
such that every composite $f_{i}g$ is a sequent homomorphism. We
prove that $g$ is a sequent homomorphism. Let $W\subseteq E$ and
$x\in\mathrm{D}(W)$. By finitarity there is a finite
$W_{0}\subseteq W$ such that $x\in\mathrm{D}(W_{0})$.
It suffices to show that $g(x)\in\mathrm{D}_{S}(g[W_{0}])$.

Let $H\in\mathcal{F}_{S}$ and suppose $g[W_{0}]\subseteq H$. If
$H\in\mathrm{Fi}(\mathbf{A})$, then $\mathbf{Fi}$ gives $x\in\mathrm{Fg}^{\mathbf{E}}(W_{0})$. Since $g$ is a Boolean homomorphism, $g(x)\in\mathrm{Fg}^{\mathbf{A}}(g[W_{0}])\subseteq H$.
If instead $H=f^{-1}_{i}[F_{i}]$ for a clan $F_{i}\in X(\mathbf{B}_{i})$,
then $(f_{i}g)[W_{0}]\subseteq F_{i}$. Since $f_{i}g$ is a sequent
homomorphism, 
\[
(f_{i}g)(x)\in\mathrm{C}_{\mathbf{B}_{i}}((f_{i}g)[W_{0}])\subseteq F_{i},
\]
so again $g(x)\in H$. Therefore $g(x)$ belongs to every member of
$\mathcal{F}_{S}$ that contains $g[W_{0}]$, and hence 
\[
g(x)\in\mathrm{D}_{S}(g[W_{0}])\subseteq\mathrm{C}_{S}(g[W]).
\]
Thus $g$ is a sequent homomorphism, so the lifting is initial.

For uniqueness, if $\mathrm{C}$ gives another initial lifting on
the same Boolean algebra $\mathbf{A}$, initiality applied to the
identity map in both directions shows that $\mathrm{C}(Z)\subseteq\mathrm{C}_{S}(Z)$
and $\mathrm{C}_{S}(Z)\subseteq\mathrm{C}(Z)$ for every $Z\subseteq A$.
Hence $\mathrm{C}=\mathrm{C}_{S}$. 
\end{proof}

Hence, as a consequence of Lemmas \ref{existence of liftings} and
\ref{lifting is initial} we conclude:

\begin{theorem}\label{SeqSH is topological over Boole} $\mathsf{SAB}$
is topological over $\mathsf{BA}$. \end{theorem}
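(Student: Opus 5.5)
The theorem follows almost directly from the two preceding lemmas, so the plan is to assemble them against the definition of a topological functor recalled above. First I will check that $(\mathsf{SAB},U)$ is a concrete category over $\mathsf{BA}$. Faithfulness of $U$ is immediate, since a sequent homomorphism is by definition a Boolean homomorphism and $U$ is the identity on underlying maps. I will also note a point the definition tacitly needs: if two sequent algebras $\langle\mathbf{A},\mathrm{C}\rangle$ and $\langle\mathbf{A},\mathrm{C}'\rangle$ share the same underlying Boolean algebra, they are equal exactly when $\mathrm{C}=\mathrm{C}'$. This is what gives ``unique'' its meaning: uniqueness of initial liftings is understood up to equality of structures on a fixed $\mathbf{A}$.

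Next, let $S=\{f_i:\mathbf{A}\to U(\langle\mathbf{B}_i,\mathrm{C}_i\rangle)\}_{i\in I}$ be an arbitrary $U$-structured source, indexed by a possibly empty set $I$.
\begin{enumerate}
\item Lemma \ref{existence of liftings} gives the sequent algebra $\langle\mathbf{A},\mathrm{C}_S\rangle$. Each $f_i:\langle\mathbf{A},\mathrm{C}_S\rangle\to\langle\mathbf{B}_i,\mathrm{C}_i\rangle$ is a sequent homomorphism, since it is a Boolean homomorphism by hypothesis and satisfies $f_i[\mathrm{C}_S(Z)]\subseteq\mathrm{C}_i(f_i[Z])$. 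So $T_S=\{f_i\}_{i\in I}$ is a lifting of $S$.
\item Lemma \ref{lifting is initial} shows that $T_S$ is initial. Any Boolean homomorphism $g:U(\langle\mathbf{E},\mathrm{D}\rangle)\to\mathbf{A}$ for which every $f_ig$ underlies a sequent homomorphism is itself a sequent homomorphism into $\langle\mathbf{A},\mathrm{C}_S\rangle$.
\item For uniqueness, let $\langle\mathbf{A},\mathrm{C}\rangle$ be any other initial lifting. Apply initiality of each lifting to $\mathrm{id}_{\mathbf{A}}$, viewed as a map from the other structure. The composites $f_i\circ\mathrm{id}$ are sequent homomorphisms because both families are liftings. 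This yields $\mathrm{C}(Z)\subseteq\mathrm{C}_S(Z)$ and $\mathrm{C}_S(Z)\subseteq\mathrm{C}(Z)$, hence $\mathrm{C}=\mathrm{C}_S$.
\end{enumerate}

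The only step that needs real care is the empty source, $I=\varnothing$. Here $\mathcal{F}_S=\mathrm{Fi}(\mathbf{A})$, so $\mathrm{C}_S=\mathrm{Fg}$. I must check that the earlier arguments do not silently assume $I\neq\varnothing$. The verification of $\mathbf{One}$, $\mathbf{Fi}$, $\mathbf{Up}$ and $\mathbf{Jo}$ in Lemma \ref{existence of liftings} only uses members of $\mathcal{F}_S$, and the filter case already covers this situation. For $\mathbf{Jo}$, the filter case relies on the Ultrafilter Theorem applied to proper filters, which works regardless of $I$.

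With these points checked, every $U$-structured source has a unique initial lifting, so $U$ is topological and $\mathsf{SAB}$ is topological over $\mathsf{BA}$.
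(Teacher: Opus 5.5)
Your proposal is correct and follows essentially the paper's route: the theorem is stated there as an immediate consequence of Lemmas \ref{existence of liftings} and \ref{lifting is initial}, and your uniqueness step (initiality applied to $\mathrm{id}_{\mathbf{A}}$ in both directions) is exactly the argument in the proof of Lemma \ref{lifting is initial}. Your extra checks, namely the faithfulness of $U$ and the empty source (where $\mathcal{F}_S=\mathrm{Fi}(\mathbf{A})$ and $\mathrm{C}_S=\mathrm{Fg}^{\mathbf{A}}$), are valid points that the paper leaves implicit.
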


\begin{corollary} The functor $U:\mathsf{SAB}\to\mathsf{BA}$ has
both a left adjoint ($L$) and a right adjoint ($R$)\textcolor{red}{,}
which are full embeddings. Furthermore, $UL=Id_{\mathsf{BA}}=UR$.
\end{corollary}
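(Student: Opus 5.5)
The corollary follows from Theorem \ref{SeqSH is topological over Boole} together with the general theory of topological functors in Chapter VI of \cite{AHS2006}. The standard facts are: every topological functor has a left adjoint, the \emph{discrete} functor, and a right adjoint, the \emph{indiscrete} functor. Both are full embeddings and are sections of the forgetful functor. Rather than only citing this, I would construct $L$ and $R$ explicitly from the lifting of Lemma \ref{existence of liftings} and check the universal properties by hand.

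For $R$, I would take the empty $U$-structured source on a Boolean algebra $\mathbf{A}$. Its family $\mathcal{F}_{\emptyset}$ is just $\mathrm{Fi}(\mathbf{A})$, so the initial lift is $R(\mathbf{A})=\langle\mathbf{A},\mathrm{Fg}\rangle$. Initiality with respect to the empty source says that every Boolean homomorphism $g\colon U\langle\mathbf{E},\mathrm{D}\rangle\to\mathbf{A}$ is a sequent homomorphism into $R(\mathbf{A})$. One can also see this directly from $\mathbf{Fi}$: $g[\mathrm{D}(W)]\subseteq g[\mathrm{Fg}(W)]\subseteq\mathrm{Fg}(g[W])$. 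Hence the identity map $U R(\mathbf{A})\to\mathbf{A}$ is a universal arrow from $U$ to $\mathbf{A}$. This gives $R\dashv U$ with counit the identity, so $UR=Id_{\mathsf{BA}}$. On morphisms set $R(h)=h$; this is legitimate by the same inequality.

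For $L$, I would use the smallest sequent structure on $\mathbf{A}$. This is the operator $\mathrm{C}_{\min}$ from Section 7, $\mathrm{C}_{\min}(\emptyset)=\{1\}$ and $\mathrm{C}_{\min}(Z)={\uparrow}Z$ otherwise. I would verify $\mathbf{Fi}$, $\mathbf{One}$, $\mathbf{Up}$ and $\mathbf{Jo}$ directly; $\mathbf{Jo}$ is the only nontrivial one and it follows because ${\uparrow}(Z\cup\{a\})\cap{\uparrow}(Z\cup\{b\})\subseteq{\uparrow}(Z\cup\{a\vee b\})$. (Alternatively, $L(\mathbf{A})$ can be obtained as the final lift of the empty sink, which exists since topological functors are also cotopological.)

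Next, any Boolean homomorphism $h\colon\mathbf{A}\to U\langle\mathbf{B},\mathrm{D}\rangle$ is a sequent homomorphism from $L(\mathbf{A})$. For $Z\neq\emptyset$, each $x\in{\uparrow}Z$ satisfies $x\geq z$ for some $z\in Z$. Then $h(x)\geq h(z)\in h[Z]\subseteq\mathrm{D}(h[Z])$, so $h(x)\in\mathrm{D}(h[Z])$ by $\mathbf{Up}$. For $Z=\emptyset$ the condition follows from $\mathbf{One}$ and $h(1)=1$. So the identity $\mathbf{A}\to UL(\mathbf{A})$ is a universal arrow, giving $L\dashv U$ with identity unit and $UL=Id_{\mathsf{BA}}$.

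Full embeddings then come for free. $L$ and $R$ are injective on objects and faithful because $UL=UR=Id$. They are full because every Boolean homomorphism between the underlying algebras is a morphism, by the two verifications above. The only step needing real care is $\mathbf{Jo}$ for $\mathrm{C}_{\min}$, including the cases $Z=\emptyset$ versus $Z\neq\emptyset$. I would handle it by noting that the closure always contains the adjoined element, so ${\uparrow}$ of a nonempty set is all that ever appears.
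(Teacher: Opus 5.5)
Your argument is correct, but it takes a different route from the paper. The paper's proof is a bare citation: Theorem~\ref{SeqSH is topological over Boole} together with Proposition~21.12 of \cite{AHS2006}. That result says every topological functor has a discrete left adjoint and an indiscrete right adjoint, both full embeddings and sections of $U$.

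You instead exhibit $L(\mathbf{A})=\langle\mathbf{A},\mathrm{C}_{\min}\rangle$ and $R(\mathbf{A})=\langle\mathbf{A},\mathrm{Fg}^{\mathbf{A}}\rangle$ and verify the universal properties by hand. For $L$ you use only $\mathbf{One}$ and $\mathbf{Up}$ of the target, and for $R$ only $\mathbf{Fi}$ of the source. The paper reaches these explicit descriptions only later, in the Remark after Proposition~\ref{fibre-complete}, by identifying the least and greatest elements of the fibre.

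The citation is shorter and places the result in the general theory of topological categories. Your check is self-contained and can be carried out without Theorem~\ref{SeqSH is topological over Boole}. It directly justifies that later Remark. It also shows where Boolean, rather than merely join-preserving, maps matter: the inclusion $g[\mathrm{Fg}(W)]\subseteq\mathrm{Fg}(g[W])$ needs $g$ to preserve finite meets. That is exactly what the map $h$ in Proposition~\ref{prop: SA not topological over BASH} fails to do.

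One notational slip: a universal arrow $UR(\mathbf{A})\to\mathbf{A}$ from $U$, with identity counit, yields $U\dashv R$, not $R\dashv U$. What you have actually proved is that $R$ is right adjoint to $U$, as the statement requires.
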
 
\begin{proof}
From Theorem \ref{SeqSH is topological over Boole} and Proposition
21.12 of \cite{AHS2006}, the result holds. 
\end{proof}

Let $(\mathsf{A},U)$ be concrete over $\mathsf{X}$ and let $Y$
be an object of $\mathsf{X}$. We recall that the \emph{fibre} of
$Y$ is the preordered class consisting of all objects $B$ of $\mathsf{A}$
such that $U(B)=Y$, ordered by: 
\[
B\leq C\quad\text{if and only if}\quad\mathrm{id}_{Y}:U(B)\to U(C)\text{ is an }\mathsf{A}\text{-morphism}.
\]
We say that $(\mathsf{A},U)$ is \emph{fibre-complete} if each of
its fibres is a complete lattice. Furthermore, if $(\mathsf{A},U)$
is topological over $\mathsf{X}$, then one of the immediate consequences
is that $(\mathsf{A},U)$ is fibre-complete (Proposition 21.11 of
\cite{AHS2006}). In our setting, the fibre over a Boolean algebra
can be identified with the preordered set of all the finitary closure
operators satisfying \textbf{Fi}, \textbf{One}, \textbf{Up}, and \textbf{Jo}
that can be defined on it, preordered by pointwise inclusion. Therefore,
by Theorem \ref{SeqSH is topological over Boole} we obtain:

\begin{proposition}\label{fibre-complete} The poset of all finitary
closure operators on a Boolean algebra $\mathbf{A}$ satisfying $\mathbf{Fi}$,
$\mathbf{One}$, $\mathbf{Up}$, and $\mathbf{Jo}$ is a complete
lattice under pointwise inclusion. Its greatest element is $\mathrm{Fg}^{\mathbf{A}}$,
and its least element is the operator 
\[
\mathrm{C}_{\min}(Z)=\begin{cases}
\{1\}, & Z=\emptyset,\\
\uparrow Z, & Z\neq\emptyset.
\end{cases}
\]
\end{proposition}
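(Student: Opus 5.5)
The plan has three parts: show the fibre is a complete lattice, identify its top element, and identify its bottom element.

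\textbf{Completeness.} By Theorem \ref{SeqSH is topological over Boole} and Proposition 21.11 of \cite{AHS2006}, every fibre of $U$ is a complete lattice. The fibre over $\mathbf{A}$ is the class of sequent algebras $\langle\mathbf{A},\mathrm{C}\rangle$, preordered by requiring $\mathrm{id}_{A}$ to be a sequent homomorphism $\langle\mathbf{A},\mathrm{C}\rangle\to\langle\mathbf{A},\mathrm{C}'\rangle$. By Lemma \ref{lem:chacact homo contact}(1), this holds iff $\mathrm{C}(Z)\subseteq\mathrm{C}'(Z)$ for all $Z$. So the fibre preorder is pointwise inclusion, which is antisymmetric, and the fibre is exactly the stated poset. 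Alternatively, arbitrary meets can be computed concretely: given a family $\{\mathrm{C}_i\}_{i\in I}$, apply Lemma \ref{existence of liftings} to the source $\{\mathrm{id}_A:\mathbf{A}\to U(\langle\mathbf{A},\mathrm{C}_i\rangle)\}_{i\in I}$. Initiality (Lemma \ref{lifting is initial}) shows that $\mathrm{C}_S$ is the greatest operator below every $\mathrm{C}_i$. Joins then exist because a poset with all meets is complete.

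\textbf{Top element.} First, $\mathrm{Fg}^{\mathbf{A}}$ is finitary and satisfies \textbf{Fi}, \textbf{One} and \textbf{Up} trivially. For \textbf{Jo}, take $x\in\mathrm{Fg}(X,a)\cap\mathrm{Fg}(X,b)$. Then there are $w_1,w_2\in\mathrm{Fg}(X)$ with $w_1\wedge a\le x$ and $w_2\wedge b\le x$. Setting $w=w_1\wedge w_2$ gives $w\wedge(a\vee b)\le x$ by distributivity. It is the greatest element because \textbf{Fi} states exactly $\mathrm{C}\le\mathrm{Fg}$.

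\textbf{Bottom element.} I expect this to be the main step, mainly checking \textbf{Jo} and minimality; one must also check that $\mathrm{C}_{\min}$ is a closure operator. It is extensive and monotone (note $\{1\}\subseteq{\uparrow}Z$ when $Z\neq\emptyset$). It is idempotent because ${\uparrow}{\uparrow}Z={\uparrow}Z$. It is finitary because $x\in{\uparrow}Z$ is witnessed by a single element of $Z$; the case $Z=\emptyset$ yields $\{1\}=\mathrm{C}_{\min}(\emptyset)$ via the empty subset. The properties \textbf{Fi}, \textbf{One} and \textbf{Up} are immediate. For \textbf{Jo}, let $x\in{\uparrow}(X\cup\{a\})\cap{\uparrow}(X\cup\{b\})$. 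If $x$ lies above some element of $X$, then $x\in{\uparrow}(X\cup\{a\vee b\})$. Otherwise $x\ge a$ and $x\ge b$, so $x\ge a\vee b$.

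Minimality follows from \textbf{Up} and \textbf{One}. For any admissible $\mathrm{C}$, we have $\mathrm{C}(\emptyset)=\{1\}$. For $Z\neq\emptyset$, we have $Z\subseteq\mathrm{C}(Z)$, and $\mathrm{C}(Z)$ is an upset, so ${\uparrow}Z\subseteq\mathrm{C}(Z)$. Hence $\mathrm{C}_{\min}\le\mathrm{C}$.
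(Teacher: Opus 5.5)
Your proof is correct. For the two extremal elements it is the paper's argument: maximality of $\mathrm{Fg}^{\mathbf{A}}$ comes from $\mathbf{Fi}$, and minimality of $\mathrm{C}_{\min}$ from extensivity, $\mathbf{Up}$ and $\mathbf{One}$. You also carry out checks the paper leaves implicit, namely $\mathbf{Jo}$ for $\mathrm{Fg}^{\mathbf{A}}$ (via distributivity) and $\mathbf{Jo}$ for $\mathrm{C}_{\min}$.

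Where you genuinely differ is completeness. The paper introduces the proposition as a consequence of Theorem~\ref{SeqSH is topological over Boole}, but its proof is direct. It forms $\mathrm{D}(Z)=\bigcap_{i}\mathrm{C}_i(Z)$, finitarizes it to $\mathrm{C}_\wedge$, asserts that the four conditions survive, and supplies the top element separately. You instead either quote fibre-completeness of topological categories, or compute meets as the initial lift of the source $\{\mathrm{id}_A\}_{i\in I}$. In the first option you correctly identify the fibre preorder with pointwise inclusion through Lemma~\ref{lem:chacact homo contact}.

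The two constructions give the same operator. For that source with $I\neq\emptyset$, $\mathcal{F}_S$ is the set of all clans of all the $\mathrm{C}_i$ together with $\mathrm{Fi}(\mathbf{A})$. Proposition~\ref{clans for a basis} together with $\mathbf{Fi}$ then gives $\mathrm{D}_S=\mathrm{D}$, hence $\mathrm{C}_S=\mathrm{C}_\wedge$. For $I=\emptyset$ the construction returns $\mathrm{Fg}^{\mathbf{A}}$, so the empty meet comes for free.

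Your route is shorter and makes the proposition an honest corollary of the topologicality results. It delegates the verification of $\mathbf{Fi}$, $\mathbf{One}$, $\mathbf{Up}$, $\mathbf{Jo}$ for the finitarized operator to Lemma~\ref{existence of liftings}. The paper's route is self-contained, independent of the categorical machinery and the lifting lemmas, and exhibits the meet by an explicit formula.
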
 
\begin{proof}
Let $\{\mathrm{C}_{i}\}_{i\in I}$ be a family of such operators and
define first 
\[
\mathrm{D}(Z)=\bigcap_{i\in I}\mathrm{C}_{i}(Z),\qquad\mathrm{C}_{\wedge}(Z)=\bigcup_{W\subseteq_{f}Z}\mathrm{D}(W).
\]
The operator $\mathrm{D}$ is a closure operator satisfying $\mathbf{Fi}$,
$\mathbf{One}$, $\mathbf{Up}$, and $\mathbf{Jo}$, although it need
not be finitary. Its finitarization $\mathrm{C}_{\wedge}$ is a finitary
closure operator and retains all four properties; the verification
of $\mathbf{Jo}$ uses the union of the two finite witnesses. Moreover,
$\mathrm{C}_{\wedge}\leq\mathrm{C}_{i}$ for every $i$, and every
finitary closure operator below all the $\mathrm{C}_{i}$ is below
$\mathrm{C}_{\wedge}$. Thus arbitrary meets exist. Since $\mathrm{Fg}^{\mathbf{A}}$
is a greatest element, the poset is a complete lattice.

The maximality of $\mathrm{Fg}^{\mathbf{A}}$ follows immediately
from $\mathbf{Fi}$. For the least element, $\mathrm{C}_{\min}$ is
readily checked to be a finitary closure operator satisfying all four
conditions. If $Z\neq\emptyset$ and $y\in\uparrow Z$, choose $z\in Z$
with $z\leq y$. Extensivity gives $z\in\mathrm{C}(Z)$, and $\mathbf{Up}$
yields $y\in\mathrm{C}(Z)$. If $Z=\emptyset$, $\mathbf{One}$ gives
$\mathrm{C}_{\min}(\emptyset)=\{1\}=\mathrm{C}(\emptyset)$. Hence
$\mathrm{C}_{\min}\leq\mathrm{C}$ for every sequent closure operator
$\mathrm{C}$. 
\end{proof}

\begin{remark} We stress that Proposition \ref{fibre-complete} allows
us to identify the functors $L$ and $R$ explicitly. Let $g:\mathbf{A}\to\mathbf{B}$
be a morphism of Boolean algebras. The left adjoint of $U$ (usually
called the \emph{discrete functor}) is defined by $L(\mathbf{A})=\langle\mathbf{A},\mathrm{C}_{\min}\rangle$
and $L(g)=g$. Further, the right adjoint of $U$ (usually called
the \emph{indiscrete functor}) is defined by $R(\mathbf{A})=\langle\mathbf{A},\mathrm{Fg}^{\mathbf{A}}\rangle$
and $R(g)=g$.

\end{remark}

The significance of Theorem \ref{SeqSH is topological over Boole}
lies in the following fact: if $(\mathsf{A},U)$ is topological over
$\mathsf{X}$, then $\mathsf{A}$ inherits many categorical properties
of $\mathsf{X}$. Thus\textcolor{red}{,} since $\mathsf{BA}$ is complete
and cocomplete, by Theorem 21.16 of \cite{AHS2006}, the following
hold.

\begin{corollary} The category $\mathsf{SAB}$ is complete and
cocomplete. \end{corollary}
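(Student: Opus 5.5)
The corollary follows from Theorem \ref{SeqSH is topological over Boole} together with Theorem 21.16 of \cite{AHS2006}. That result says that if a concrete category $(\mathsf{A},U)$ is topological over $\mathsf{X}$ and $\mathsf{X}$ is complete (resp. cocomplete), then $\mathsf{A}$ is complete (resp. cocomplete), and $U$ preserves limits and colimits. So the plan is to combine the two facts below and then describe how limits and colimits are built.

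\emph{Step 1: $\mathsf{BA}$ is complete and cocomplete.}
\begin{itemize}
\item $\mathsf{BA}$ is a variety, so it has all small limits: products are cartesian products with pointwise operations, and equalizers are subalgebras $\{a: f(a)=g(a)\}$.
\item Varieties are also cocomplete. Coproducts are free products, and coequalizers are quotients by the congruence generated by the pairs $(f(a),g(a))$.
\end{itemize}

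\emph{Step 2: transport to $\mathsf{SAB}$.} The transport goes through initial and final lifts.
\begin{itemize}
\item For a diagram $D$ in $\mathsf{SAB}$, take the limit cone $(L,\pi_j)$ of $UD$ in $\mathsf{BA}$.
\item Equip $L$ with the initial lifting $\mathrm{C}_S$ of Lemma \ref{existence of liftings} for the structured source $\{\pi_j\}$. By Lemma \ref{lifting is initial} this is a limit cone in $\mathsf{SAB}$: any cone from $\langle\mathbf{E},\mathrm{D}\rangle$ induces a unique Boolean map into $L$, and initiality makes that map a sequent homomorphism.
\item Colimits are handled dually, using final liftings of structured sinks. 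Topological functors are also cotopological (Theorem 21.9 of \cite{AHS2006}), so these final lifts exist without a separate construction.
\end{itemize}

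\emph{Main obstacle.} The only delicate point is existence of final lifts, which I would take from the cited categorical result rather than construct. If an explicit description is wanted, the final structure on the colimit $Q$ with sink $\{q_j\}$ is the least element of the fibre over $Q$ making every $q_j$ a sequent homomorphism. It can be obtained as the meet, in the complete lattice of Proposition \ref{fibre-complete}, of all sequent closure operators $\mathrm{C}$ on $Q$ with $q_j[\mathrm{C}_j(X)]\subseteq \mathrm{C}(q_j[X])$ for all $j$. This family is nonempty because it contains $\mathrm{Fg}^{Q}$, the top of the fibre.

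The catch is that the meet $\mathrm{C}_\wedge$ is the finitarization of the pointwise intersection. So one must check that each $q_j$ stays a sequent homomorphism after finitarizing. This holds because the $\mathrm{C}_j$ are finitary: every instance $x\in\mathrm{C}_j(X)$ already lies in $\mathrm{C}_j(X_0)$ for some finite $X_0\subseteq X$, so each $q_j(x)$ lands in $\mathrm{D}(q_j[X_0])\subseteq\mathrm{C}_\wedge(q_j[X])$.
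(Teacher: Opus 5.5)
Your proposal is correct and follows the paper's route. The paper also obtains the corollary directly from Theorem~\ref{SeqSH is topological over Boole}, the completeness and cocompleteness of $\mathsf{BA}$, and Theorem 21.16 of \cite{AHS2006}; your explicit constructions (limits via initial lifts, colimits via final lifts, and the check that the finitarized meet from Proposition~\ref{fibre-complete} keeps each $q_j$ a sequent homomorphism) are correct but go beyond what the proof needs.
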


Let $\mathsf{BSp}$ be the category of Boolean spaces and continuous
functions. As a consequence of Theorem VI.21.9 of \cite{AHS2006},
the well-known Stone duality for Boolean algebras and Theorem \ref{dually category},
we obtain:

\begin{corollary}\label{SeqSp is topological over BSp} $\mathsf{SeqSp}_{\mathrm{m}}$
is topological over $\mathsf{BSp}$. Moreover, it is complete and
cocomplete. \end{corollary}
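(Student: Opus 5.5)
The plan is to transport topologicality across the dualities, using Theorem VI.21.9 of \cite{AHS2006}. In one of its standard forms, this theorem says that if $(\mathsf{A},U)$ is topological over $\mathsf{X}$, then so is $(\mathsf{A}^{\mathsf{op}},U^{\mathsf{op}})$ over $\mathsf{X}^{\mathsf{op}}$ (topologicality is self-dual). Topologicality is also invariant under concrete equivalence. By Theorem \ref{SeqSH is topological over Boole}, $U:\mathsf{SAB}\to\mathsf{BA}$ is topological, so $U^{\mathsf{op}}:\mathsf{SAB}^{\mathsf{op}}\to\mathsf{BA}^{\mathsf{op}}$ is topological as well. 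The Stone duality gives an equivalence $\mathsf{BA}^{\mathsf{op}}\simeq\mathsf{BSp}$, and Theorem \ref{dually category} gives an equivalence $\mathsf{SAB}^{\mathsf{op}}\simeq\mathsf{SeqSp}_{\mathrm{m}}$. It therefore remains to produce a faithful functor $V:\mathsf{SeqSp}_{\mathrm{m}}\to\mathsf{BSp}$ making the square with $U^{\mathsf{op}}$ commute up to natural isomorphism. With that square in hand, $(\mathsf{SeqSp}_{\mathrm{m}},V)$ is concretely equivalent to $(\mathsf{SAB}^{\mathsf{op}},U^{\mathsf{op}})$ over an equivalent base, and hence is topological.

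The functor $V$ is defined by $V\langle X,\mathcal{B}\rangle=\Ul(\mathcal{B})$, the Stone space of the Boolean algebra $\mathcal{B}$. On a sequent morphism $f$, we set $V(f)=(f^{*})^{-1}$ on ultrafilters. This is well defined because $f^{*}$ is a Boolean homomorphism exactly when $f$ is a sequent morphism. With this definition, $V$ is literally $\mathrm{Stone}\circ U\circ G$, where $G$ is the functor from Theorem \ref{dually category}. Commutativity up to isomorphism then follows because $G$ and $F$ are mutually inverse equivalences, with unit and counit given by $\beta$ and $\varepsilon$ (Lemma \ref{unit duality} and Proposition \ref{counit duality}).

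The one point needing care is the faithfulness of $V$. Suppose $(f^{*})^{-1}=(g^{*})^{-1}$ on ultrafilters. Then $f^{*}=g^{*}$ by Stone duality, and hence $f=g$ by the faithfulness of the duality. Concretely, $\varepsilon_{X_2}(f(x))=(f^{*})^{-1}[\varepsilon_{X_1}(x)]$ and $\varepsilon_{X_2}$ is injective.

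A further technical point is whether topologicality transfers along equivalences that commute only up to natural isomorphism rather than strictly. I would handle this by noting that initial sources are defined up to isomorphism, and that uniqueness of initial lifts holds in our fibres because the fibres are posets (amnestic). Alternatively, one can invoke the cited theorem in its form for concrete equivalences. I expect this bookkeeping to be the main obstacle, rather than any genuinely new mathematics.

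For the \emph{moreover} part, $\mathsf{BSp}$ is complete and cocomplete, being equivalent to $\mathsf{BA}^{\mathsf{op}}$ and $\mathsf{BA}$ being complete and cocomplete. Theorem 21.16 of \cite{AHS2006} then yields completeness and cocompleteness of $\mathsf{SeqSp}_{\mathrm{m}}$. Equivalently, this also follows directly from the previous corollary, since $\mathsf{SeqSp}_{\mathrm{m}}\simeq\mathsf{SAB}^{\mathsf{op}}$ and the opposite of a complete and cocomplete category is complete and cocomplete.
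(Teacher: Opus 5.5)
Your route is the paper's: its entire proof is the citation of AHS~21.9, Stone duality and Theorem~\ref{dually category}, with 21.16 for limits. The explicit functor you add, however, breaks the argument.

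The points of $\Ul(\mathcal{B})$ are ultrafilters of $\mathcal{B}$. So the space $V\langle X,\mathcal{B}\rangle=\Ul(\mathcal{B})$ determines $\mathcal{B}=\bigcup\Ul(\mathcal{B})\cup\{\emptyset\}$, hence $X$ (its top) and $\tau_{\mathcal{B}}$. Thus $V$ is injective on objects, and every fibre contains at most one object. It is also not surjective on objects, which any topological functor must be, since the empty source on each base object needs a lift.

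Concretely, take $\mathbf{A}=\{0,a,b,1\}$ and let $S_1,S_2$ be the duals of $\langle\mathbf{A},\mathrm{Fg}^{\mathbf{A}}\rangle$ and $\langle\mathbf{A},\mathrm{C}_{\min}\rangle$; the latter has the three clans $\{a,1\},\{b,1\},\{a,b,1\}$. Let $\beta_1,\beta_2$ be the two representation maps. The homeomorphism $V(S_2)\to V(S_1)$ induced by the Boolean isomorphism $\mathcal{B}_1\to\mathcal{B}_2$, $\beta_1(x)\mapsto\beta_2(x)$, is $V$ of no sequent morphism $S_2\to S_1$. Such a morphism would make $\mathrm{id}_{A}\colon\langle\mathbf{A},\mathrm{Fg}^{\mathbf{A}}\rangle\to\langle\mathbf{A},\mathrm{C}_{\min}\rangle$ a sequent homomorphism, yet $0\in\mathrm{Fg}^{\mathbf{A}}(\{a,b\})$ while $0\notin\mathrm{C}_{\min}(\{a,b\})=\{a,b,1\}$. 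Since the fibre over $V(S_1)$ is $\{S_1\}$, the empty $V$-structured source on $V(S_1)$ has no initial lift.

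So the step ``concretely equivalent over an equivalent base, hence topological'' is false for the paper's notion, which requires unique lifts with the underlying object equal on the nose. That notion transfers along concrete isomorphisms, not along equivalences. Your amnesticity remark addresses uniqueness, but what fails here is existence.

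A workable choice is $W\langle X,\mathcal{B}\rangle=\min X$ with the subspace topology, which is homeomorphic to $\Ul(\mathcal{B})$ via $\varepsilon_X$ and Proposition~\ref{maximals are ultrafilters}, together with $W(f)=f|_{\min X_1}$. It is well defined because $f^{*}$ being Boolean forces $f[\min X_1]\subseteq\min X_2$. It is faithful because $U\mapsto U\cap\min X$ is injective on $\mathcal{B}$. Initial lifts for $W$ are obtained as follows:
\begin{enumerate}
\item dualize a $W$-structured source on a Boolean space $Y$ to a structured sink into the clopen algebra of $Y$;
\item take its final lift in $\mathsf{SAB}$, which exists because, by the same duality theorem, structured sinks have final lifts;
\item dualize back and rename the ultrafilter points as the points of $Y$.
\end{enumerate}
This lift is unique only up to renaming the non-minimal points, so $W$ is not amnestic. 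What one honestly obtains is topologicality up to isomorphism, i.e.\ that $\mathsf{SeqSp}_{\mathrm{m}}$ is equivalent to a category topological over $\mathsf{BSp}$. The paper's one-line derivation is silent on this point as well.

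For the \emph{moreover} part, your second argument, via $\mathsf{SeqSp}_{\mathrm{m}}\simeq\mathsf{SAB}^{\mathsf{op}}$, is correct and independent of all this. Your first argument, applying 21.16 to $V$, inherits the problem.
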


Let $\mathsf{BASH}$ be the category whose objects are Boolean algebras and whose morphisms are the join-semihomomorphisms preserving $1$ between them. We conclude this section by noting that one may be tempted to replicate the process described above by considering the faithful functor $V\colon\mathsf{SA}\to\mathsf{BASH}$ determined by the assignments $V(\langle\mathbf{A},\mathrm{C}\rangle)=\mathbf{A}$ and $V(h)=h$, where $h\colon\langle\mathbf{A},\mathrm{C}\rangle\to\langle\mathbf{B},\mathrm{D}\rangle$ is a sequent semi-homomorphism, in order to obtain a result analogous to Theorem~\ref{SeqSH is topological over Boole}. In fact, it is not hard to see that the same arguments used in Lemma~\ref{existence of liftings} show that every $V$-structured source has a lifting. Nevertheless, we point out that this is as far as we can go in this direction, as the following result shows.

\begin{proposition}\label{prop: SA not topological over BASH}
$\mathsf{SA}$ not topological over $\mathsf{BASH}$.    
\end{proposition}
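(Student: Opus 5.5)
The plan is to exploit a necessary condition for topologicity rather than attack initial liftings directly. By Proposition 21.12 of \cite{AHS2006}, if $V\colon\mathsf{SA}\to\mathsf{BASH}$ were topological, then $V$ would have a right adjoint $R$ (the indiscrete functor) with $VR=\mathrm{Id}_{\mathsf{BASH}}$. Equivalently, already the empty $V$-structured source on a Boolean algebra $\mathbf{A}$ would have an initial lift $\langle\mathbf{A},\mathrm{D}\rangle$. Initiality of this lift means: \emph{every} $1$-preserving join-semihomomorphism $g\colon V(\langle\mathbf{B},\mathrm{E}\rangle)\to\mathbf{A}$ is a sequent semi-homomorphism $\langle\mathbf{B},\mathrm{E}\rangle\to\langle\mathbf{A},\mathrm{D}\rangle$. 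I will show that no sequent closure operator $\mathrm{D}$ on a suitable $\mathbf{A}$ can have this property. This contradicts topologicity.

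First, I fix $\mathbf{A}=\mathbf{B}$ to be the four-element Boolean algebra $\{0,a,b,1\}$ with atoms $a,b$, and I give $\mathbf{B}$ the indiscrete structure $\mathrm{Fg}^{\mathbf{B}}$. This is a sequent algebra by Proposition \ref{fibre-complete}. Next, I define $g\colon B\to A$ by $g(0)=0$ and $g(a)=g(b)=g(1)=1$. This map is monotone and preserves $0$, $1$ and binary joins: the only nontrivial case is $g(a\vee b)=g(1)=1=g(a)\vee g(b)$. Hence $g$ is a morphism of $\mathsf{BASH}$. It does not preserve meets, since $g(a\wedge b)=0\neq 1=g(a)\wedge g(b)$, and this failure is what the argument will exploit.

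Now I suppose $\mathrm{D}$ is any sequent closure operator on $\mathbf{A}$ making $g$ a sequent semi-homomorphism $\langle\mathbf{B},\mathrm{Fg}^{\mathbf{B}}\rangle\to\langle\mathbf{A},\mathrm{D}\rangle$. Condition (1) of Lemma \ref{lem:chacact homo contact} with $X=\{a,b\}$ gives
\[
0=g(0)\in g[\mathrm{Fg}^{\mathbf{B}}(\{a,b\})]\subseteq \mathrm{D}(g[\{a,b\}])=\mathrm{D}(\{1\}).
\]
On the other hand, $\mathrm{D}(\{1\})\subseteq \mathrm{Fg}^{\mathbf{A}}(\{1\})=\{1\}$ by $\mathbf{Fi}$. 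So $0\in\{1\}$, which is a contradiction. Hence no structure on $\mathbf{A}$ serves as the initial lift of the empty source, the right adjoint cannot exist, and $V$ is not topological.

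The main point to check carefully is the categorical step, namely that topologicity forces the initial lift of the empty source to receive every $\mathsf{BASH}$-arrow. This is exactly the definition of initiality with the index set $I=\emptyset$: the hypothesis on the composites $f_i g$ is then vacuous. So I would state it directly from the definition given before Lemma \ref{existence of liftings}, rather than relying only on the citation of \cite{AHS2006}. The remaining verifications are the routine checks of $g$ done above.
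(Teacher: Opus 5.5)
Your proof is correct, and it takes a genuinely different and much shorter route than the paper.

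\textbf{The paper's route.} It works with the nonempty source $\{g\}$, where $g\colon\mathbf{A}\to V\langle\mathbf{A},\mathrm{Fg}^{\mathbf{A}}\rangle$ is the same map you use ($g(0)=0$, $g(x)=1$ otherwise). It then proceeds in three steps:
\begin{enumerate}
\item it computes the closure operator $\mathrm{C}_g$ generated by the filters and the preimages of clans;
\item it shows that $\mathrm{C}_g$ is the largest sequent structure making $g$ a semi-homomorphism, and hence the only candidate for an initial lift;
\item it builds an auxiliary sequent algebra $\langle\mathbf{E},\mathrm{D}\rangle$ on $\mathcal{P}(\{p,q,r\})$ and a non-meet-preserving $\mathsf{BASH}$-map $h$ such that $gh$ is a semi-homomorphism but $h$ is not.
\end{enumerate}

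\textbf{Your route.} You take the empty source, for which initiality is unconditional, and use $g$ as a test arrow out of $\langle\mathbf{B},\mathrm{Fg}^{\mathbf{B}}\rangle$ rather than as the source map. The mechanism is the same in both arguments: a join-map that fails to preserve meets sends a finite set generating the improper filter to a set generating a proper filter. In your setting this yields $0\in\mathrm{D}(\{1\})=\{1\}$ at once. You never need to compute $\mathrm{C}_g$, argue that it is the unique candidate, or verify the axioms for $\langle\mathbf{E},\mathrm{D}\rangle$.

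The empty source is admissible under the paper's own definition, since sources are indexed by ``some set $I$''. So the step you flagged is sound.

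\textbf{What each approach buys.} Your version locates the failure more sharply: there is no indiscrete object at all over the four-element algebra. Consequently the analogue of the paper's corollary, which gives a right adjoint $R$ with $VR=\mathrm{Id}$, fails for $V$. The paper's heavier argument instead shows that initial lifts fail even for a nonempty source into a nontrivial algebra, and it identifies the unique candidate lift explicitly. That complements the paper's remark that every $V$-structured source does admit a lifting; what fails is only its initiality.
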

\begin{proof}
Let $\mathbf A=\{0,a,b,1\}$ be the four-element Boolean algebra and
$g\colon \mathbf A\to V\langle\mathbf A,\mathrm{Fg}^{\mathbf A}\rangle$ the map with
$g(0)=0$ and $g(x)=1$ for $x\neq 0$. Since $g(x\vee y)=0$ iff $x=y=0$,
$g$ preserves finite joins and $1$, so it is a $\mathsf{BASH}$-morphism.

Observe that 
\[
\mathcal{F}_{g}=\{g^{-1}[F]\colon F\in X(\mathbf{A})\}\cup\mathrm{Fi}(\mathbf{A})=\{A, \{a,b,1\},\{a,1\}, \{b,1\}, \{1\}\},
\]
and since $\mathbf{A}$ is finite, for every $Z\subseteq A$ we get 
\[
\mathrm{C}_{g}(Z)=\bigcap\{H\in\mathcal{F}_{g}\colon Z\subseteq H\}=
\begin{cases}
A, & \text{if } 0\in Z,\\[2pt]
\{a,b,1\}, & \text{if } 0\notin Z \text{ and } a,b\in Z,\\[2pt]
\{a,1\}, & \text{if } a\in Z \text{ and } b,0\notin Z,\\[2pt]
\{b,1\}, & \text{if } b\in Z \text{ and } a,0\notin Z,\\[2pt]
\{1\}, & \text{if } Z\subseteq\{1\}.
\end{cases}
\]
Replicating the proof of Lemma~\ref{existence of liftings} (which only uses that the maps $f_i$ preserve finite joins and $1$),
$\langle\mathbf A,\mathrm C_g\rangle$ is a sequent algebra and
$g\colon\langle\mathbf A,\mathrm C_g\rangle\to U\langle\mathbf A,\mathrm{Fg}^{\mathbf A}\rangle$ is a sequent semi-homomorphism.
We claim that $\mathrm C_g$ is the only possible initial lifting of $\{g\}$.
Let $\mathrm C$ be any closure operator on $A$ such that $\langle\mathbf A,\mathrm C\rangle$ is a sequent algebra and
$g\colon\langle\mathbf A,\mathrm C\rangle\to U\langle\mathbf A,\mathrm{Fg}^{\mathbf A}\rangle$ is a sequent semi-homomorphism.
If $0\notin Z$, then $g[Z]\subseteq\{1\}$, so $g[\mathrm C(Z)]\subseteq\mathrm{Fg}^{\mathbf A}(g[Z])=\{1\}$, whence $0\notin\mathrm C(Z)$.
Together with $\mathbf{Fi}$ this gives $\mathrm C(Z)\subseteq\mathrm{Fg}^{\mathbf A}(Z)\setminus\{0\}\subseteq\mathrm C_g(Z)$,
and if $0\in Z$ then $\mathrm C_g(Z)=A\supseteq\mathrm C(Z)$. Hence $\mathrm C\subseteq\mathrm C_g$.

Now suppose $\mathrm C$ is an initial lifting of $\{g\}$. Since $\mathrm{id}_A$ is a $\mathsf{BASH}$-morphism and
$g\circ\mathrm{id}_A=g$ is a sequent semi-homomorphism out of $\langle\mathbf A,\mathrm C_g\rangle$,
initiality gives $\mathrm C_g\subseteq\mathrm C$. Thus $\mathrm C=\mathrm C_g$.
It remains to show that $\mathrm C_g$ is not initial. To do so, let $P=\{p,q,r\}$ and $\mathbf E$ be the Boolean algebra whose universe is $\mathcal P(P)$, with $0=\varnothing$ and $1=P$. Now we define for every $W\subseteq E$:
\[
\mathrm{D}(W)=
\begin{cases}
E, & \text{if } \varnothing\in W,\\[2pt]
\uparrow (\bigcap W)\setminus\{\varnothing\}, & \text{if } \varnothing\notin W.
\end{cases}
\]
\emph{Claim 1.} $\langle\mathbf{E},\mathrm{D}\rangle$ is a sequent algebra. 
\medskip

\noindent
By Proposition~\ref{prop: charact closure} we check that $\mathrm D$ is a finitary closure operator
satisfying $\mathbf{Fi}$, $\mathbf{One}$, $\mathbf{Up}$, $\mathbf{Jo}$.
Note that $\mathrm{Fg}^{\mathbf E}(W)=\uparrow (\bigcap W)$, so
$\mathrm D(W)=\mathrm{Fg}^{\mathbf E}(W)$ if $\varnothing\in W$ and
$\mathrm D(W)=\mathrm{Fg}^{\mathbf E}(W)\setminus\{\varnothing\}$ otherwise. To prove that $\mathrm{D}$ is a closure operator, we only show idempotency since the remainig conditions are easy. Let $K=\mathrm D(W)$. If $\varnothing\in W$, then $K=E$ and $\mathrm D(K)=E=K$.
Suppose $\varnothing\notin W$. Since $W\subseteq K$, we have $\bigcap K\subseteq\bigcap W$.
Conversely, every element of $K$ contains $\bigcap W$ and $K\neq\varnothing$ (it contains $P$),
so $\bigcap W\subseteq\bigcap K$. Thus $\bigcap K=\bigcap W$, and since $\varnothing\notin K$,
\[
\mathrm D(K)=\uparrow\!\bigl(\textstyle\bigcap K\bigr)\setminus\{\varnothing\}
=\uparrow\!\bigl(\textstyle\bigcap W\bigr)\setminus\{\varnothing\}=\mathrm D(W)=K.
\]

Observe that $\mathbf{Fi}$ holds by the remark above;
$\mathrm D(\varnothing)=\{P\}$ gives $\mathbf{One}$; and $\mathbf{Up}$ holds because
$\varnothing$ is the least element, so removing it from an up-set leaves an up-set. Finally, we prove \emph{$\mathbf{Jo}$.} Let $y\in\mathrm D(X,c)\cap\mathrm D(X,d)$ and $m=\bigcap X$.
If $\varnothing\in X$, then $\mathrm D(X,c\cup d)=E$. If $c=\varnothing$ (resp.\ $d=\varnothing$),
then $c\cup d=d$ (resp.\ $c$) and there is nothing to prove.
Otherwise $\varnothing\notin X\cup\{c\cup d\}$, $y\neq\varnothing$, and $m\cap c\subseteq y$, $m\cap d\subseteq y$,
so $m\cap(c\cup d)=\bigcap(X\cup\{c\cup d\})\subseteq y$. Hence $y\in\mathrm D(X,c\cup d)$.
\medskip

Define $h\colon\mathbf E\to\mathbf A$ by
\[
h(X)=\bigvee_{t\in X}h(\{t\}),\qquad h(\{p\})=h(\{q\})=a,\quad h(\{r\})=b,\quad h(\varnothing)=0 .
\]  
Then $h$ preserves finite joins and $h(P)=a\vee b=1$, so $h$ is a $\mathsf{BASH}$-morphism, and
$h(X)=0$ iff $X=\varnothing$. Note that $h$ does not preserve meets:
$h(\{p\})\wedge h(\{q\})=a$ while $h(\{p\}\cap\{q\})=0$.
\medskip

\noindent
\emph{Claim 2.} $g h$ is an sequent semi-homomorphism $\langle\mathbf E,\mathrm{D}\rangle\to\langle\mathbf A,\mathrm{Fg}^{\mathbf A}\rangle$.
\medskip

\noindent
It is a $\mathsf{BASH}$-morphism as a composite of such, and $g h(X)=0$ iff $X=\varnothing$.
If $\varnothing\in W$, then $0\in gh[W]$, so $\mathrm{Fg}^{\mathbf A}(gh[W])=A$ and there is nothing to prove.
If $\varnothing\notin W$, then $\varnothing\notin\mathrm{D}(W)$, so $gh[\mathrm{D}(W)]\subseteq\{1\}\subseteq \mathrm{Fg}^{\mathbf A}(gh[W])$.
\medskip

\noindent
\emph{Claim 3}: $h\colon\langle\mathbf E,\mathrm{D}\rangle\to\langle\mathbf A,\mathrm{C}_g\rangle$ is not an sequent semi-homomorphism.
\medskip

\noindent
Let $W=\{\{p\},\{q\}\}$. Then $\varnothing\notin W$ and $\{p\}\cap\{q\}=\varnothing$, so
$\mathrm{Fg}^{\mathbf E}(W)=E$ and $\{r\}\in E\setminus\{\varnothing\}=\mathrm{D}(W)$.
On the other hand $h[W]=\{a\}$, so $\mathrm{C}_g(h[W])=\mathrm{Fg}^{\mathbf A}(\{a\})=\{a,1\}$,
whereas $h(\{r\})=b\notin\{a,1\}$. Hence $h[\mathrm{D}(W)]\not\subseteq \mathrm{C}_g(h[W])$.

\medskip
\noindent By Claims 2 and 3, $h$ is a $\mathsf{BASH}$-morphism such that $gh$ is a sequent semi-homomorphism,
but $h\colon\langle\mathbf E,\mathrm D\rangle\to\langle\mathbf A,\mathrm C_g\rangle$ is not.
So $\mathrm C_g$ is not initial, and since it is the only candidate, $\{g\}$ has no initial lifting.
Hence $V$ is not topological.
\end{proof}

\section{Acknowledgements} 
This research has been funded by the CONICET [PIP 11220250100318CO] and the European Research
Council (ERC) under the European Union’s Horizon 2025 research and innovation program (grant agreement No.101299559).

\section{Declaration of generative AI and AI-assisted technologies in the manuscript preparation process}
During the preparation of this work, the authors used ChatGPT (OpenAI) and Claude (Anthropic) to improve the English grammar and style, refine the mathematical exposition, search for counterexamples, and assist with \LaTeX{} formatting. The authors reviewed and edited the content as needed and take full responsibility for the published article.

\end{document}